\newcommand{\tend}[1]{\hbox{\oalign{$\bm{#1}$\crcr\hidewidth$\scriptscriptstyle\bm{\sim}$\hidewidth}}}
\numberwithin{equation}{section}
\newcommand\norm[1]{\left\lVert#1\right\rVert}
\newtheorem{theorem}{Theorem}[section]
\newtheorem{remark}[theorem]{Remark}
\newtheorem{lemma}[theorem]{Lemma}
\newtheorem{definition}[theorem]{Definition}
\newcommand{\ou}{\color{blue}}
\let\oldqedsymbol=\qedsymbol
	\renewcommand{\qedsymbol}{$\blacktriangleleft$}
	\renewcommand{\qedsymbol}{\oldqedsymbol}}
\colorlet{colorYO}{red}
\colorlet{colorSJ}{blue}
\newcommand{\SJ}[1]{\textcolor{colorSJ}{#1}}   
\title[Homogenization for a Variational Problem ]{Homogenization for a Variational Problem with a
Slip Interface Condition}
\author{Miao-jung Yvonne Ou}
\address{Department of Mathematical Sciences, University of Delaware,
408 Ewing Hall, Newark, Delaware 19716 USA}
\email{mou@udel.edu}
\author{Silvia Jim\'enez Bola\~nos}
\address{Department of Mathematics, Colgate University,
13 Oak Drive, Hamilton, NY 13346 USA}
\email{sjimenez@colgate.edu}
\date{\today}
\keywords{Poroelastic wave equations, two-scale convergence, periodic structures, homogenization, slip condition.}
\begin{document}
\maketitle
\begin{abstract}
Inspired by applications, we study the effect of interface slip on the effective wave propagation in poroelastic composites. The current literature on the homogenization for the poroelastic wave equations are all based on the no-slip interface condition posed on the micro-scale. However, for certain pore fluids, the no-slip conditions are known to be physically invalid. Even though there are results in a few papers regarding porous media with slip condition on the interface, they are for porous media with rigid solid matrix rather than an elastic one. For the former case, the equations for the micro-scale are posed only in the pore space and the slip on the interface involves only the fluid velocity and the fluid stress. For the latter case, both the fluid equations and the elastic equations are posed in the respective phases and the slip conditions involve the velocities on both sides of the interface, rather than just the fluid side. With this slip condition, a variational boundary value problem governing the small vibrations of a periodic mixture of an elastic solid and a slightly viscous fluid is studied in the paper.  The method of two-scale convergence is used to obtain the macroscopic behavior of the solution and to identify the role played by the slip interface condition. 
\end{abstract}

\maketitle

\section{Introduction}

Poroelastic materials are composite materials made of elastic solid matrix and fluid residing in the pore space, e.g. cancellous bones, saturated rocks and sea ice. To study the physical properties of these composite materials, the availability of the poroelastic wave equations for wavelength much larger than the scale of the micorstructure is crucial. In this wavelength regime, techniques such as the homogenization method can be used to derive these effective wave equations from the wave equations for each phase in the micro-scale. Compared with the effective media approach, the homogenization approach is less phenomenological in the sense that the coefficients in the homogenized equations can be calculated by solving the so-called cell problems, which are derived as part of the homogenization process. The homogenization for the variational boundary value problem of the stiff type that  governs the small vibrations of a periodic mixture of an elastic solid and a slightly viscous fluid, with no discontinuity of the displacement in the interface between the two phases, was developed by Nguetseng in \cite{Nguetseng1990}, where the resulting homogenized equations are the poroelastic wave equations for composites with no-slip interface conditions. Also, this set of equations validates the well-known Biot equations \cite{biot1956theory-of-propa,biot1956theory-high}. However, it has been observed that the no-slip interface condition are not valid for some applications such as the polymeric pore fluid or coated interface; see \cite{Sochi2011Slip-at-fluid-s} and the references therein. In these cases, the interface condition at the micro-scale is of slip type and it leads to a set of interesting questions. For example, since the no-slip condition is linked to the concept of the boundary layers within which the energy dissipation is the most significant, how will the energy dissipation change when the no-slip condition is replaced by a slip condition on the interface? In the homogenized equations, the energy dissipation and wave dispersion are described by the effective properties called 'dynamic permeability' and 'dynamic tortuosity', the two most important characterizations of the dynamic properties of the poroelastic materials. How will these quantities change when the no-slip conditions are replaced by a slip condition? As a starting point for answering these important questions, we carry out in this paper the analysis for the case in which a slip boundary condition at the  solid-fluid interface is allowed.  

	We consider the mixture of an elastic solid and a slightly viscous fluid, in the framework of small motions linearized with respect to a rest state, where the geometric distribution of the solid and fluid parts is periodic, with characteristic length of the period given by $\epsilon$, with $0<\epsilon<<1$.  Mixtures in mechanics are of great interest in physical applications, see for example, \cite{LEVY1979,nguetseng1980,SanchezPalencia1980,Nguetseng1990,Allaire91,Collin2018}.  

A variety of different problems arise according to the orders of the viscosity coefficients and the topological properties of the mixture.  In \cite{SH1980}, the authors used the energy method (see \cite{Bensoussan1978}) to show that, whether or not the fluid phase is connected, if the elasticity coefficients together with the viscosity coefficients are $O(\epsilon^0)$, the limit of the displacement, as $\epsilon\rightarrow 0$, does not depend on the local variables.  In this paper, we will take the elasticity coefficients  to be $O(\epsilon^0)$, and the viscosity coefficients to be $O(\epsilon^2)$ i.e. $\mu\epsilon^2$ and $\eta\epsilon^2$ with constant $\mu$ and $\eta$.  In the formal analysis, seen in \cite{LEVY1979} or Chapter~8 of {\cite{SanchezPalencia1980}}, it is concluded that, if the fluid part is strictly contained in the period of reference, and therefore it is not connected, the formal limit of the displacement in the mixture does not depend on the local variables.  On the other hand, if the fluid part intersects each face of the period of reference, and it is connected, the formal limit of the displacement depends on the local variables. The formal analysis results above were rigorously proved in \cite{Nguetseng1990} using the method of 2-scale convergence (see \cite{Nguetseng1989,Allaire1992}).  Differing from \cite{Nguetseng1990}, connectedness doesn't play a role in the analysis developed and the results obtained in this paper.

	{The novelty of this paper is that the results obtained in \cite{Nguetseng1990} are generalized to the case in which there is a slip interface condition.  Though the results of the present paper are similar to those of  \cite{Nguetseng1990}, dealing with the interface term (\ref{interfacecond}) is not trivial.}  New technical lemmas are required in order to carry out the limiting process.

	This paper is organized as follows.  In Section~\ref{background}, we present the set up of the mathematical problem.  In Section~\ref{VarForDer}, we derive (\ref{TVF5}), the variational formulation of the boundary value problem that governs the small vibrations of a solid-fluid mixture with a slip boundary condition on their interface .  In Section~\ref{EUProof}, we prove the existence and uniqueness of the solution to our problem for a fixed $\epsilon$.  The main general convergence and extension results can be found in Section~\ref{CVResult}. In Section~\ref{sec:premres}, we prove the necessary uniform estimates to find the macroscopic equation.  The derivation for the local problems (for $u_1(\mathbf{x},\mathbf{y})$ and $u_r(\mathbf{x},\mathbf{y})$ in Lemma~\ref{lem:due2scsol}) is done in Section~\ref{sec:derivlocprob}. In Section~\ref{HomogProbConvTh}, we derive the homogenized problem.  Finally, in Section~\ref{conclusion}, we present our conclusions.

\section{Background}
\label{background}
	{In this section, we state the mathematical formulation of the problem to be studied, which concerns the acoustics equations of poroelastic materials with periodic microstructure and a slip boundary condition on the solid-fluid interface. }  
\subsection{{Geometry of the microstructure}}	
	We consider the space $\mathbb{R}^3$ of the variables $\mathbf{y}=(y_1,y_2,y_3)$ to be a periodic set, with unit cell $Y=\left(-\frac{1}{2},\frac{1}{2}\right)^3$, decomposed as: 
\begin{equation*}
Y=Y_s\cup Y_f \cup \Gamma,
\end{equation*}
where $Y_s$ and $Y_f$ are open sets in $\mathbb{R}^3$, where $Y_s$ represents the part of $Y$ occupied by the solid and $Y_f$ represents the part of $Y$ occupied by the fluid, and $\Gamma$ is the smooth surface separating them. {The boundary, the closure and the Lebesgue measure of a measurable set $A$ in $\mathbb{R}^3$ are denoted by $\partial A$, $\overline{A}$ and $|A|$, respectively.} Let $\tilde{Y_{{I}}}$ be the $Y$-periodic extension of  $Y_{{I}}$ , i.e. the union of all the $(Y_{{I}}{\cup(\overline{Y_{{I}}}\cap\partial Y)})+\mathbf{k}$, $\mathbf{k}$ ranging over $\mathbb{Z}^3$, $I=s,f$.  Similarly, we denote by $\tilde{\Gamma}$ the $Y$-periodic extension of $\Gamma$.

		
		Following \cite{Allaire1989}, we assume the following hypotheses: 
		\begin{itemize}
		    \item[(i)]$Y_s$ and $Y_f$ have strictly positive measures on $\overline{Y}$. 
		    \item[(ii)] $\tilde{Y_f}$ and $\tilde{Y_s}$ are open sets with boundary of class $C^1$, and are locally located on one side of their boundary.  Moreover, $\tilde{Y_s}$ is connected. Hence $\overline{Y_s}$ has an intersection with each face of the cube $\overline{Y}$ with strictly positive surface measure. 
		    \item[(iii)] $Y_s$ is an open connected set with a locally Lipschitz boundary.
		\end{itemize}



\subsection{{Notation}}
{Let $\Omega$ be the smooth bounded open set occupied by the poroelastic material in space $\mathbb{R}^3$ with coordinates $\mathbf{x}=(x_1,x_2,x_3)$. Let $\epsilon$ denote the scale of the periodic microstructure, $0<\epsilon\ll1$. }

The solid part and the fluid part of $\Omega$, together with their interface, are defined as follows. 
\begin{equation*}
	\Omega_\epsilon^s=\Omega\cap\epsilon\tilde{Y_s},\hspace{1cm}\Omega_\epsilon^f=\Omega\cap\epsilon\tilde{Y_f},\hspace{1cm} \Gamma_\epsilon=\left\{\mathbf{x}\in\Omega\,:\,\frac{\mathbf{x}}{\epsilon}\in\tilde{\Gamma}\right\},
\end{equation*}

Observe that $\Omega_\epsilon^s$ is connected. {Since $\Gamma_\epsilon$ is orientable}, we can define $\Gamma_\epsilon^s$ and $\Gamma_\epsilon^f$ to be the solid side and the fluid side of $\Gamma_\epsilon$, respectively. {With this notation, the following jump operator across $\Gamma_\epsilon$ is defined
\begin{equation}
\llbracket\cdot\rrbracket_s^f:=(\cdot)\Big|_{\Gamma_\epsilon^f}-(\cdot)\Big|_{\Gamma_\epsilon^s}.
\label{jump_op}
\end{equation}
Moreover, we let $\mathbf{n}$ to be the unit outward normal vector of $\partial \Omega_\epsilon^f$, i.e. $\mathbf{n}$ points toward the solid phase.}

We denote by $d\sigma(\mathbf{y})$, for $\mathbf{y}\in Y$, and by $d\sigma_\epsilon(\mathbf{x})$, for $\mathbf{x}\in\Omega$, the surface measures on $\Gamma$ and $\Gamma_\epsilon$, respectively. Note that  $$\partial\Omega_\epsilon^s=\Gamma_\epsilon\cup\left(\partial\Omega_\epsilon^s\cap\partial\Omega\right)\hspace{3mm} \text{ and }\hspace{3mm} \Gamma_\epsilon=\partial\Omega_\epsilon^s\setminus(\partial\Omega_\epsilon^s\cap\partial\Omega).$$

{The 'micro' coordinates $\mathbf{y}$ and the 'macro'-coordinates $\mathbf{x}$ are related by $\mathbf{y}=\epsilon \mathbf{x}$. Also, the superscript $\epsilon$ is reserved for signifying the following re-scaling of a function $w$}  
\begin{equation*}
	w^\epsilon(\mathbf{x})=w\left(\frac{\mathbf{x}}{\epsilon}\right), \hspace{2mm}\text{ for $w=w(\mathbf{y})$}
\end{equation*}
 
 The gradient of a vector field $\mathbf{v}(\mathbf{x})$ is denoted by $\nabla \mathbf{v}$, which is a matrix such that  $[\nabla \mathbf{v}]_{ij}=\frac{\partial v_i}{\partial x_j}$. The linear strain tensor  with respect to $\mathbf{x}$ (resp. $\mathbf{y}$) is denoted by $\mathbf{E}(\cdot)$(resp. $\mathbf{e}(\cdot)$) is defined as follows:

	\begin{equation*}
		E_{ij}(\mathbf{v(\mathbf{{x}})})=\frac{1}{2}\left(\frac{\partial v^i}{\partial x_j}+\frac{\partial v^j}{\partial x_i}\right),\, i,j=1,2,3;
	\end{equation*}
	\begin{equation*}
		e_{ij}(\mathbf{w}(\mathbf{{y}}))=\frac{1}{2}\left(\frac{\partial w^i}{\partial y_j}+\frac{\partial w^j}{\partial y_i}\right),\,i,j=1,2,3.
	\end{equation*}
  We denote by ${\rm div}_\mathbf{y}$ the divergence operator with respect to $\mathbf{y}$, and by ${\rm div}_\mathbf{x}$, or simply ${\rm div}$, the same operator with respect to $\mathbf{x}$.  If $V$ is a vector space, the vector space of the same name written in boldface $\mathbf{V}$ represents the corresponding product space $V^3=V\times V\times V$.  In this paper, all the vector spaces considered are over the complex field $\mathbb{C}$.  The Einstein summation convention is used throughout the rest of the paper, $\delta_{ij}$ is the Kronecker delta, and $C$ represents a universal constant, which is independent of variable quantities such as $\epsilon$, $t$, ..., and that may change value from line to line.
\subsection{{Governing equations}}
	We denote the elastic moduli of the solid phase by constants $a_{ijkl}$, $1\leq i,j,k,l \leq3$, satisfying the following symmetry conditions and $V$-ellipticity condition
\begin{align}
	&a_{ijkl}=a_{jikl}=a_{ijlk}=a_{klij},\label{symmetry}\\
	&a_{ijkl}\xi_{kl}\xi_{ij}\geq c\,\xi_{ij}\xi_{ij}, \hspace{2mm} c>0, \hspace{1mm} \forall \xi_{ij}=\xi_{ji},	\label{a-coercive}
\end{align}	
	
	{For the fluid part, let $\eta\epsilon^2,\mu\epsilon^2\in \mathbb{R}$ be the fluid viscosities, where $\mu$ and $\eta$ are of $O(1)$ and satisfy the following conditions:}
\begin{align}
	\mu>0, \hspace{2mm} \frac{\eta}{\mu} > -\frac{2}{3}.\label{etamu} 
\end{align}
	{We} assume the {external} force $\mathbf{f}=\left\{f^i\right\}\in L^2_{loc}(0,+\infty; \mathbf{L}^2(\Omega))$ is independent of $\epsilon$ and satisfies the following bound: 
\begin{equation}
	\label{f}
	\norm{\mathbf{f}(t)}^2_{\mathbf{L}^2(\Omega)}\leq K e^{mt} \hspace{2mm} (K>0, m\in\mathbb{R}), \hspace{2mm}\text{ for almost all $0<t<\infty$.}
\end{equation} 

{{Let $\rho^s$ and $\rho^f$ be the density of the solid phase and the fluid phase, respectively, and $c_0$ the reference speed of sound}. For a fixed $\epsilon$, the governing equations for the solid phase are in terms of the} displacement {field} $\mathbf{u}_\epsilon$  \cite{SanchezPalencia1980}(Chapter 8):
\begin{align}
	\rho^s\frac{\partial^2 u_\epsilon^i}{\partial t^2}&=\frac{\partial \sigma^s_{ij}}{\partial x_j}+f^i,	\label{S1} \\
	\tend{\sigma}^s_{ij}&=a_{ijkl}E_{kl}(\mathbf{u}_\epsilon);	\label{S2}
	\end{align}
	
{whereas} for the fluid phase, they are given in terms of the fluid velocity field { $\frac{\partial \mathbf{u}_\epsilon}{\partial t}$ and the acoustic pressure $p_\epsilon:=-c_0^2\,\rho^f\,\nabla\cdot \mathbf{u}_\epsilon$  } \cite{SanchezPalencia1980}(Chapter 8):
	\begin{align}
	\rho^f\frac{\partial^2 u^i_\epsilon}{\partial t^2}&=\frac{\partial \sigma^f_{ij}}{\partial x_j}+f^i,		\label{F1}\\
 \tend{\sigma}^f_{ij}&=-\delta_{ij}p_\epsilon+\left(\epsilon^2\eta\delta_{ij}\delta_{kl}+2\mu\epsilon^2\delta_{ik}\delta_{jl}\right)E_{kl}\left(\frac{\partial \mathbf{u}_\epsilon}{\partial t}\right),		\label{F2}
	\end{align}
{Note that} the constitutive equation (\ref{F2}) {is equivalent with}:
	\begin{equation}
		\label{F8} 
		\tend{\sigma}^f_{ij}=\delta_{ij}c_0^2\rho^f\nabla\cdot \mathbf{u}_\epsilon+\left(\epsilon^2\eta\delta_{ij}\delta_{kl}+2\mu\epsilon^2\delta_{ik}\delta_{jl}\right)E_{kl}\left(\frac{\partial \mathbf{u}_\epsilon}{\partial t}\right).
	\end{equation}

{The equations of motion \eqref{S1}-\eqref{F2} are complemented by the jump conditions} on the interface ${\Gamma_\epsilon}$:
\begin{align}
	\tend{\sigma}^s{\ou \cdot}\mathbf{n}&=\tend{\sigma}^f  {\ou \cdot} \mathbf{n}  \mbox{    (continuity of stress)},\label{contnormstress}\\
	\tend{\sigma}^f\cdot \mathbf{n}&=\epsilon\,\alpha\left\llbracket\frac{\partial \mathbf{u_\epsilon}}{\partial t}\right\rrbracket_s^f  \mbox{   (slip condition),}
	\label{interfacecond}
\end{align} 
where $\alpha>0$ is the slip constant.  For more information about the $\epsilon$-scaling for the slip constant in (\ref{interfacecond}), see \cite{Allaire91,Cioranescu1996}. In this paper, we consider the homogeneous initial conditions:
\begin{equation}
\mathbf{u}_\epsilon(0)=0, \, \mathbf{v}_\epsilon(0)=\displaystyle\frac{\partial \mathbf{u}_\epsilon}{\partial t}(0)=0.
\label{ICs}
\end{equation}

\section{Variational Formulation}
\label{VarForDer}

{In this section, the variational formulation of the system of equations \eqref{S1}-\eqref{ICs} is derived. We start with introducing the function spaces which are used in this paper.}
\subsection{{Function spaces}}
\label{fun_spaces}
{ Recall that $Y=(-\frac{1}{2},\frac{1}{2})^3$. Let $Y_o$ be a subset of $Y$ with Lipschitz boundary such that the periodic extension $\tilde{Y_o}$ has $C^1$ boundary in $\mathbb{R}^3$. The following typical function spaces are used in this paper.}
\begin{itemize}
	\item $C_p$: the space of $Y$-periodic continuous functions on $\mathbb{R}^3$.
	
	\item $C_p^\infty$: the space of $Y$-periodic $C^\infty$ functions on $\mathbb{R}^3$. 

	\item $L^2_p(Y_o)$: the space of {$Y$-periodic square integrable function in $Y_o$}. This is a Hilbert space with the $L^2(Y_o)$-norm.
	
	\item $H^1_p(Y_o)$= $\left\{w|\,w\in L^2_{p}(Y_o),\frac{\partial w}{\partial y_i}\in L_p^2(Y_o),\, i=1,2,3\right\}$.  This is a Hilbert space with the $H^1(Y_o)$-norm.
	
	\item $H^1_p(Y_o)/\mathbb{C}^3=\left\{ w|\, w\in H^1_p(Y_o) \mbox{ and }\int_{Y_o}w\,d\mathbf{y}=0\right\},$ equipped with the norm $ \norm{w}^2_{H^1_p(Y_o)/\mathbb{C}}=\sqrt{\sum_{i=1}^N\norm{\frac{\partial w}{\partial y_i}}^2_{L^2(Y_o)}}$.
	
	\item $\mathscr{K}({\Omega})$: {The space of continuous functions} with compact support {in $\Omega$}.
	
	\item$\mathscr{D}(\Omega)$: space of $C^\infty$ functions with compact support in $\Omega$. 
\end{itemize}

In the case $Y_o=Y$, we will write $L^2_p$ (respectively, $H^1_p$) instead of $L^2_p(Y)$ (respectively, $H_p^1(Y)$).
\vspace{0.1in}

{The following function spaces specializing to the interface slip conditions \eqref{slipcondit} are considered in this paper.}

\begin{equation}
	\label{V}
 V:=\left\{\mathbf{w}\,|\,\mathbf{w}\in H^1(\Omega_\epsilon^f\cup \Omega_\epsilon^s)\, : \,\mathbf{w}\big|_{\partial\Omega}=\mathbf{0} \right\},
\end{equation}
with the norm given by: 
\begin{equation}
\label{Vnorm}
\norm{\mathbf{w}}_V^2:=\norm{\mathbf{w}}_{H^1(\Omega_\epsilon^f)}^2+\norm{\mathbf{w}}_{H^1(\Omega_\epsilon^s)}^2+\norm{\llbracket\mathbf{w}\rrbracket_s^f}^2_{L^2(\Gamma_\epsilon)}.
\end{equation}
Notice that $V$ is a closed subspace of $H^1(\Omega_\epsilon^f\cup \Omega_\epsilon^s)$ by the trace theorem.

The counterpart of $V$ for functions defined in the unit cell $Y$ is given by:
\begin{align*}
 V_Y&:=\left\{\mathbf{w}\in H^1(Y_f\cup Y_s)\, : \mathbf{w} \mbox{ is $Y$-periodic}\right\},\\
\norm{\mathbf{w}}_{V_Y}^2&:=\norm{\mathbf{w}}_{H^1(Y_f)}^2+\norm{\mathbf{w}}_{H^1(Y_s)}^2+\norm{\llbracket\mathbf{w}\rrbracket_s^f}^2_{L^2(\Gamma)}.
\end{align*}
The role of the interface term in the norm will be made clear later.
As will be revealed in Theorem \ref{uniform_bound}, the following Hilbert space is also needed in the analysis.
\begin{equation}
	\label{E0def}
	E_0(\Omega_\epsilon^s\cup\Omega_\epsilon^f)=\big\{\mathbf{w}\,:\,\mathbf{w}\in L^2(\Omega_\epsilon^s\cup\Omega_\epsilon^f),\,{\rm div}(\mathbf{w}) \in L^2(\Omega_\epsilon^s\cup\Omega_\epsilon^f),\,\mathbf{w}\cdot\mathbf{n}_\Omega=0 \text{ on } \partial\Omega\big\},
\end{equation} 
where $\mathbf{n}_\Omega$ represents the unit normal on $\partial\Omega$ pointing outward from $\Omega$, equipped with the inner product: $$ <{\mathbf{w}_1},{\mathbf{w}_2}>_{E_0(\Omega_\epsilon^s\cup\Omega_\epsilon^f)}=\int_{\Omega}\mathbf{w}_1\cdot \overline{\mathbf{w}_2} \,d\mathbf{x}+\int_{\Omega_\epsilon^s\cup\Omega_\epsilon^f}{\rm div}(\mathbf{w}_1) \overline{{\rm div}(\mathbf{w}_2)}\, d\mathbf{x},$$ for $\mathbf{w}_1,\,\mathbf{w}_2\in E_0(\Omega_\epsilon^s\cup\Omega_\epsilon^f)$.  The norm induced by this inner product is denoted by $\| \cdot  \|_{E_0(\Omega_\epsilon^s\cup\Omega_\epsilon^f)}$.{The Laplace transform of the fluid motion with respect to the solid  will be shown to be in the following space, }
\begin{equation}
\label{spaceW}
{W}:=\left\{\mathbf{w}\in V_Y \,:\,\mathbf{w}=0\text{ on }Y_s,\text{ and }{\rm div}_\mathbf{y}\mathbf{w}=0\right\}.
\end{equation}  
Note that $W$ is a closed vector subspace of $V_Y$.
\subsection{{Derivation of the variational problem}}
{The variational formulation in the function spaces mentioned above is derived in this section.} 
\vspace{0.1in}

For the solid phase, (\ref{S1}) and (\ref{S2}) lead to:
\begin{align*}
&\int_{\Omega_\epsilon^s}\rho^s\frac{\partial^2 u_\epsilon^i}{\partial t^2}\overline{w^i}d\mathbf{x}\\
&\quad=\int_{\Omega_\epsilon^s}\overline{w^i}f^id\mathbf{x}-\int_{\Omega_\epsilon^s}\frac{\partial \overline{w^i}}{\partial x_j}\left(a_{ijkl}E_{kl}(\mathbf{u}_\epsilon)\right)d\mathbf{x}
	+\int_{\Gamma_\epsilon}\overline{w^i}a_{ijkl}E_{kl}(\mathbf{u}_\epsilon)n_s^jd\sigma_\epsilon(\mathbf{x}),
\end{align*}
for all $\mathbf{w}\in V$, where $\mathbf{n}_s$ is the unit normal vector of $\Gamma_\epsilon$ pointing out of $\Omega_\epsilon^s$ (toward the fluid part).  By the symmetry of $a_{ijkl}$ in \eqref{symmetry}, we have 
\begin{equation*}
	\int_{\Omega_\epsilon^s}a_{ijkl}\frac{\partial u^k}{\partial x_l}\frac{\partial \overline{v^i}}{\partial x_j}d\mathbf{x}=\int_{\Omega_\epsilon^s}a_{ijkl}E_{kl}(\mathbf{u})\overline{E_{ij}(\mathbf{v})}d\mathbf{x}.
\end{equation*}
Therefore, the above can be rewritten as:
\begin{align}
\label{vpsolid}
	\int_{\Omega_\epsilon^s}\rho^s\frac{\partial^2 u_\epsilon^i}{\partial t^2}\overline{w^i}d\mathbf{x}&=\int_{\Omega_\epsilon^s}\overline{w^i}f^id\mathbf{x}-\int_{\Omega_\epsilon^s}a_{ijkl}E_{kl}(\overline{\mathbf{w}})E_{kl}(\mathbf{u}_\epsilon)d\mathbf{x}\notag\\
	&\quad-\int_{\Gamma_\epsilon}\overline{w^i}a_{ijkl}E_{kl}(\mathbf{u}_\epsilon)n^jd\sigma_\epsilon(\mathbf{x}),
\end{align}
where the minus sign in the last integral is due to the fact that $\mathbf{n}=-\mathbf{n}_s$.

For the fluid phase, (\ref{F1}) and (\ref{F8}) imply for all $\mathbf{w}\in V$, we have:
\begin{align}
\label{vpfluid}
	\int_{\Omega_\epsilon^f}\rho^f\frac{\partial^2 u^i_\epsilon}{\partial t^2}\overline{w^i}d\mathbf{x}
&=\int_{\Omega_\epsilon^f}\overline{w^i}f^id\mathbf{x}-\int_{\Omega_\epsilon^f}\left[c_0^2\rho^f(\nabla\cdot \mathbf{u}_\epsilon)(\overline{\nabla\cdot \mathbf{w}})\right.\notag\\
&\quad\left.+\epsilon^2\eta(\overline{\nabla\cdot \mathbf{w}})\left(\nabla\cdot\frac{\partial \mathbf{u}_\epsilon}{\partial t}\right)+2\epsilon^2\mu\frac{\partial\overline{w^i}}{\partial x_j}E_{ij}\left(\frac{\partial \mathbf{u}_\epsilon}{\partial t}\right)\right]d\mathbf{x}\notag\\
&\quad+\int_{\Gamma_\epsilon}\left[c_0^2\rho^fn^j\overline{w^j}(\nabla\cdot \mathbf{u}_\epsilon)+\epsilon^2\eta n^j\overline{w^j}\left(\nabla\cdot\frac{\partial \mathbf{u}_\epsilon}{\partial t}\right)\right.\notag\\
&\quad\left.+2\mu\epsilon^2n^j\overline{w^i}E_{ij}\left(\frac{\partial \mathbf{u}_\epsilon}{\partial t}\right)\right]d\sigma_\epsilon(\mathbf{x}).
\end{align}
Observe that $a_{ijkl}E_{kl}(\mathbf{u}_\epsilon)n^j=(\tend{\sigma}^s\mathbf{n})^i$ is the total solid stress acting on the interface and, similarly, the terms in the boundary integral in (\ref{vpfluid}) can be regarded as $(\tend{\sigma}^f\mathbf{n})^i$.  Summing the boundary integrals in (\ref{vpsolid}) and (\ref{vpfluid}), we can use (\ref{contnormstress}) to obtain:
\begin{align}
\label{slipcondit} 
-\int_{\Gamma_\epsilon^s}(\tend{\sigma}^s\mathbf{n})^i\overline{w^i}d\sigma_\epsilon(\mathbf{x})+\int_{\Gamma_\epsilon^f}(\tend{\sigma}^f\mathbf{n})^i\overline{w^i}d\sigma_\epsilon(\mathbf{x})
&=\int_{\Gamma_\epsilon}(\tend{\sigma}^f\mathbf{n})^i\left(\overline{\llbracket\mathbf{w}\rrbracket_s^f}\right)^id\sigma_\epsilon(\mathbf{x})\notag\\
&=\epsilon\int_{\Gamma_\epsilon}\alpha\left\llbracket\frac{\partial \mathbf{u_\epsilon}}{\partial t}\right\rrbracket_s^f\cdot\overline{\llbracket\mathbf{w}\rrbracket_s^f}\,d\sigma_\epsilon(\mathbf{x}).
\end{align}

From (\ref{vpsolid}), (\ref{vpfluid}) and (\ref{slipcondit}), the variational formulation for our problem is as follows: 

Find $\mathbf{u_\epsilon}$, function of $t$ with values in $V$, such that:
\begin{align*}
	\int_{\Omega}f^i\overline{w^i}d\mathbf{x}&=\int_{\Omega_\epsilon^s}\rho^s\frac{\partial^2 u_\epsilon^i}{\partial t^2}\overline{w^i}d\mathbf{x}+\int_{\Omega_\epsilon^f}\rho^f\frac{\partial^2 u_\epsilon^i}{\partial t^2}\overline{w^i}d\mathbf{x}\notag\\
&\quad+\int_{\Omega_\epsilon^s}a_{ijkl}E_{kl}(\mathbf{u_\epsilon})\overline{E_{ij}(\mathbf{w})}d\mathbf{x}+\int_{\Omega_\epsilon^f}\gamma(\nabla\cdot \mathbf{u_\epsilon})(\overline{\nabla\cdot \mathbf{w}})d\mathbf{x}\notag\\
&\quad+\epsilon^2\int_{\Omega_\epsilon^f}\left[\eta(\overline{\nabla\cdot \mathbf{w}})\left(\nabla\cdot\frac{\partial \mathbf{u_\epsilon}}{\partial t}\right)+2\mu\overline{E_{ij}(\mathbf{w})}E_{ij}\left(\frac{\partial \mathbf{u_\epsilon}}{\partial t}\right)\right]d\mathbf{x}\notag\\
&
\quad+\epsilon\int_{\Gamma_\epsilon}\alpha\left\llbracket\frac{\partial \mathbf{u_\epsilon}}{\partial t}\right\rrbracket_s^f\cdot\overline{\llbracket\mathbf{w}\rrbracket_s^f}d\sigma_\epsilon(\mathbf{x}),
\end{align*}
for all $\mathbf{w}\in V$; or equivalently:
\begin{align}
\label{TVF5}
	\int_{\Omega}f^i\overline{w^i}d\mathbf{x}&=\int_{\Omega_\epsilon^s}\rho^s\frac{\partial^2 u_\epsilon^i}{\partial t^2}\overline{w^i}d\mathbf{x}+\int_{\Omega_\epsilon^f}\rho^f\frac{\partial^2 u_\epsilon^i}{\partial t^2}\overline{w^i}d\mathbf{x}+c^{\epsilon}(\mathbf{u_\epsilon},\mathbf{w})\notag\\
&
\quad+\epsilon^2b^\epsilon\left(\frac{\partial \mathbf{u_\epsilon}}{\partial t},\mathbf{w}\right)+\epsilon\int_{\Gamma_\epsilon}\alpha\left\llbracket\frac{\partial \mathbf{u_\epsilon}}{\partial t}\right\rrbracket_s^f\cdot\overline{\llbracket\mathbf{w}\rrbracket_s^f}\,d\sigma_\epsilon(\mathbf{x}),
\end{align}
for all $\mathbf{w}\in V$, where $\mathbf{u_\epsilon}(0)=\displaystyle\frac{\partial \mathbf{u_\epsilon}}{\partial t}(0)=0$ and the sesquilinear forms $b^\epsilon$  and $c^\epsilon$ are defined as
\begin{align}
	\label{c-eps}
	b^\epsilon(\mathbf{u},\mathbf{v})&=\int_{\Omega_\epsilon^f}\left[\eta\left(\nabla\cdot\mathbf{u}\right)(\overline{\nabla\cdot \mathbf{v}})+2\mu E_{ij}\left(\mathbf{u}\right)\overline{E_{ij}(\mathbf{v})}\right]d\mathbf{x},\notag\\
c^\epsilon(\mathbf{u},\mathbf{v})&=\int_{\Omega_\epsilon^s}a_{ijkl}E_{kl}(\mathbf{u})\overline{E_{ij}(\mathbf{v})}d\mathbf{x}+\int_{\Omega_\epsilon^f}\gamma(\nabla\cdot \mathbf{u})(\overline{\nabla\cdot \mathbf{v}})d\mathbf{x}.
\end{align}

\section{Well-Posedness of the Variational Formulation}
\label{EUProof}

{In this section, the analysis of the variational problem \eqref{TVF5} is carried out in the Laplace transformed domain.} 

Let $\mathbf{\hat{v}}(\lambda)$ be the Laplace transform of a function $\mathbf{v}(t)$. 
The variational formulation of problem (\ref{TVF5}) in the Laplace transform domain {for a fixed $\lambda$} reads as follows (for the ease of notation, we omit the argument $\lambda$ in $\mathbf{\hat{u}_\epsilon}(\lambda)$ and $\mathbf{\hat{f}}(\lambda)$) 
\vspace{0.1in}

	Find $\mathbf{\hat{u}_\epsilon}\in V$ such that, for all $\mathbf{w}\in V$ {the following equation is satisfied:}
\begin{equation}
\label{LVF}
\int_{\Omega}\mathbf{\hat{f}}\cdot\overline{\mathbf{w}}d\mathbf{x}=a^\epsilon(\mathbf{\hat{u}_\epsilon},\mathbf{w})
\end{equation}
where the form $a^\epsilon(\mathbf{\hat{u}_\epsilon},\mathbf{w})$ is defined as follows:
\begin{align*}
a^\epsilon(\mathbf{\hat{u}_\epsilon},\mathbf{w})&:=
\lambda^2\int_{\Omega_\epsilon^s}\rho^s \mathbf{\hat{u}_\epsilon}\cdot\overline{\mathbf{w}}d\mathbf{x}+\lambda^2\int_{\Omega_\epsilon^f}\rho^f\mathbf{\hat{u}_\epsilon}\cdot\overline{\mathbf{w}}d\mathbf{x}+c^{\epsilon}(\mathbf{\hat{u}_\epsilon},\mathbf{w})+\lambda\epsilon^2b^\epsilon\left(\mathbf{\hat{u}_\epsilon},\mathbf{w}\right)\\
&
\quad+\lambda\,\epsilon\int_{\Gamma_\epsilon}\alpha\llbracket\mathbf{\hat{u}_\epsilon}\rrbracket_s^f\cdot \overline{\llbracket\mathbf{w}\rrbracket}_s^f\,d\sigma_\epsilon(\mathbf{x}).
\end{align*}

The main result in this section is the following theorem.
{
\begin{theorem}
\label{existence_lap}
For any fixed $\epsilon>0$ and $\lambda$ such that $Re\lambda>\lambda_0>0$, with $\lambda_0$ large enough, the variational problem \eqref{LVF} has a unique solution.
\end{theorem}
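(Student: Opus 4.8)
The plan is to recast \eqref{LVF} as the abstract variational equation $a^\epsilon(\mathbf{\hat{u}_\epsilon},\mathbf{w})=\ell(\mathbf{w})$ for all $\mathbf{w}\in V$, with $\ell(\mathbf{w}):=\int_\Omega\mathbf{\hat f}\cdot\overline{\mathbf{w}}\,d\mathbf{x}$, posed on the Hilbert space $\big(V,\norm{\cdot}_V\big)$ of \eqref{V}--\eqref{Vnorm}, and to invoke the Lax--Milgram theorem in its form for sesquilinear forms over $\mathbb{C}$. Two of the three hypotheses are routine. The functional $\ell$ is bounded and antilinear on $V$ because Cauchy--Schwarz gives $|\ell(\mathbf{w})|\le\norm{\mathbf{\hat f}(\lambda)}_{\mathbf{L}^2(\Omega)}\norm{\mathbf{w}}_V$, and $\mathbf{\hat f}(\lambda)\in\mathbf{L}^2(\Omega)$ whenever $Re\,\lambda>\lambda_0$ by the exponential growth bound \eqref{f}. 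The form $a^\epsilon$ is bounded on $V\times V$ by applying Cauchy--Schwarz termwise: the four volume integrals are each controlled by a $(\lambda,\epsilon)$-dependent multiple of $\norm{\mathbf{u}}_V\norm{\mathbf{w}}_V$, and the slip integral by $|\lambda|\epsilon\alpha\,\norm{\llbracket\mathbf{u}\rrbracket_s^f}_{L^2(\Gamma_\epsilon)}\norm{\llbracket\mathbf{w}\rrbracket_s^f}_{L^2(\Gamma_\epsilon)}\le|\lambda|\epsilon\alpha\,\norm{\mathbf{u}}_V\norm{\mathbf{w}}_V$ --- this is precisely where including the interface jump in the norm \eqref{Vnorm} pays off, since it makes the slip term continuous with no extra trace estimate.

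The heart of the matter is coercivity, and the point to watch is that $\lambda^2$ need not have positive real part, so one must not test with $\mathbf{w}=\mathbf{\hat{u}_\epsilon}$ directly. Instead I would estimate $Re\big(\overline\lambda\,a^\epsilon(\mathbf{u},\mathbf{u})\big)$. Setting $\lambda_1:=Re\,\lambda$ and using $\overline\lambda\lambda^2=|\lambda|^2\lambda$, $\overline\lambda\lambda=|\lambda|^2$ together with $Re(|\lambda|^2\lambda)=|\lambda|^2\lambda_1$, a direct computation gives
\[
Re\big(\overline\lambda\,a^\epsilon(\mathbf{u},\mathbf{u})\big)=|\lambda|^2\lambda_1\!\int_\Omega\rho\,|\mathbf{u}|^2\,d\mathbf{x}+\lambda_1\,c^\epsilon(\mathbf{u},\mathbf{u})+|\lambda|^2\epsilon^2\,b^\epsilon(\mathbf{u},\mathbf{u})+|\lambda|^2\epsilon\,\alpha\,\norm{\llbracket\mathbf{u}\rrbracket_s^f}_{L^2(\Gamma_\epsilon)}^2,
\]
where $\rho=\rho^s$ on $\Omega_\epsilon^s$ and $\rho=\rho^f$ on $\Omega_\epsilon^f$. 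All four terms on the right are real and nonnegative; moreover $c^\epsilon(\mathbf{u},\mathbf{u})\ge c\,\norm{\mathbf{E}(\mathbf{u})}_{L^2(\Omega_\epsilon^s)}^2+\gamma\,\norm{\nabla\cdot\mathbf{u}}_{L^2(\Omega_\epsilon^f)}^2$ by the symmetry and ellipticity \eqref{symmetry}--\eqref{a-coercive} and $\gamma=c_0^2\rho^f>0$, and $b^\epsilon(\mathbf{u},\mathbf{u})\ge c'\,\norm{\mathbf{E}(\mathbf{u})}_{L^2(\Omega_\epsilon^f)}^2$ for some $c'>0$, since \eqref{etamu} makes the integrand of $b^\epsilon(\mathbf{u},\mathbf{u})$ bounded below pointwise by $c'|\mathbf{E}(\mathbf{u})|^2$ (decompose $\mathbf{E}$ into deviatoric and spherical parts). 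Because $|\lambda|\ge\lambda_1\ge\lambda_0$, every coefficient on the right is bounded below by a positive constant, so $Re\big(\overline\lambda\,a^\epsilon(\mathbf{u},\mathbf{u})\big)$ dominates $\norm{\mathbf{u}}_{L^2(\Omega)}^2+\norm{\mathbf{E}(\mathbf{u})}_{L^2(\Omega_\epsilon^s)}^2+\norm{\mathbf{E}(\mathbf{u})}_{L^2(\Omega_\epsilon^f)}^2+\norm{\llbracket\mathbf{u}\rrbracket_s^f}_{L^2(\Gamma_\epsilon)}^2$ up to a constant. Korn's inequality then upgrades this to control of $\norm{\mathbf{u}}_V^2$: on $\Omega_\epsilon^s$, which is connected with $\mathbf{u}$ vanishing on $\partial\Omega_\epsilon^s\cap\partial\Omega$ (of positive surface measure by hypothesis (ii)), the Korn inequality with boundary condition bounds $\norm{\mathbf{u}}_{H^1(\Omega_\epsilon^s)}^2$ by $\norm{\mathbf{E}(\mathbf{u})}_{L^2(\Omega_\epsilon^s)}^2$; on $\Omega_\epsilon^f$, whose connected components need not touch $\partial\Omega$, the second Korn inequality bounds $\norm{\mathbf{u}}_{H^1(\Omega_\epsilon^f)}^2$ by $\norm{\mathbf{E}(\mathbf{u})}_{L^2(\Omega_\epsilon^f)}^2+\norm{\mathbf{u}}_{L^2(\Omega_\epsilon^f)}^2$, and the zeroth-order remainder is already controlled by the mass term. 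Thus $\overline\lambda\,a^\epsilon$ is a bounded coercive sesquilinear form, and Lax--Milgram applied to $\overline\lambda\,a^\epsilon(\cdot,\cdot)=\overline\lambda\,\ell(\cdot)$ --- which is equivalent to \eqref{LVF} --- produces the unique solution $\mathbf{\hat{u}_\epsilon}\in V$.

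I expect the coercivity estimate to be the only real obstacle, and within it the delicate part is not the complex coefficient (the $\overline\lambda$-rotation disposes of that, and is exactly where $Re\,\lambda>0$ is used) but the passage from strain-and-mass control to full $H^1$ control of $\mathbf{u}$ on each phase. Care is needed to use the correct Korn inequality on the fluid part --- only the second one, carrying an $L^2$ remainder, is available there because fluid components may be interior to $\Omega$ --- and to notice that this remainder is absorbed by the $\lambda^2$ mass term. For fixed $\epsilon$ any $\lambda_0>0$ in fact suffices; taking $\lambda_0$ large is harmless and merely keeps the coercivity constant $c(\epsilon,\lambda_0)$ clean.
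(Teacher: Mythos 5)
Your proposal is correct, and its skeleton (neutralize the complex factor in front of the inertial term, prove $V$-coercivity, then apply Lax--Milgram) is the same as the paper's; your multiplication by $\overline\lambda$ is, up to the positive factor $1/|\lambda|^2$, identical to the paper's device of dividing \eqref{LVF} by $\lambda$ and using that $Re(1/\lambda)$ and $Re(\lambda)$ share a sign. Where you genuinely diverge is in how the $L^2$ part of the $V$-norm is recovered. The paper discards the (nonnegative) mass term and instead proves a separate Poincar\'e-type inequality, Lemma~\ref{rem:bsp}, bounding $\norm{\mathbf{w}}_{L^2(\Omega_\epsilon^s\cup\Omega_\epsilon^f)}^2$ by the two strain energies plus the interface jump $\norm{\llbracket\mathbf{w}\rrbracket_s^f}_{L^2(\Gamma_\epsilon)}^2$; that lemma is proved by a compactness/contradiction argument with rigid motions and is exactly what motivates putting the jump term into \eqref{Vnorm}. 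You instead retain the mass term, which after the $\overline\lambda$-rotation carries the positive coefficient $|\lambda|^2 Re(\lambda)$, so $L^2$ control is free and only standard Korn inequalities are needed (Korn with boundary condition on the connected solid part, which meets $\partial\Omega$ on a set of positive measure, and the second Korn inequality of Lemma~\ref{lem:Korn} on the fluid part, its $L^2$ remainder absorbed by the mass term). Your route is more elementary, avoids the contradiction lemma entirely, and makes transparent that any $\lambda_0>0$ suffices for fixed $\epsilon$; the paper's route isolates in Lemma~\ref{rem:bsp} the structural role of the slip term in the norm, at the price of an $\epsilon$-dependent constant and a nonconstructive proof. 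A small additional merit of your write-up: the lower bound $b^\epsilon(\mathbf{u},\mathbf{u})\geq c'\norm{\mathbf{E}(\mathbf{u})}^2_{L^2(\Omega_\epsilon^f)}$ via the deviatoric/spherical splitting and the strict inequality in \eqref{etamu} is a cleaner justification than the corresponding displayed step in the paper's proof. Both arguments, of course, still need the jump term in \eqref{Vnorm} to make the slip integral bounded, as you note.
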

}

The following lemma {plays a crucial role in the proof of Theorem\ref{existence_lap}.}
\begin{lemma}
\label{rem:bsp}
Suppose $\partial \Omega^s_\epsilon \cap \partial \Omega\ne\emptyset $.  Then, for all $\mathbf{w}\in V$, there exists $K=K(\epsilon)>0$ such that the following estimate holds:
\begin{align}
	\label{1.15}
&\int_{\Omega^s_\epsilon}w^i\overline{w^i}d\mathbf{x}+\int_{\Omega^f_\epsilon}w^i\overline{w^i}d\mathbf{x} \\
&\leq K\left(\int_{\Omega^s_\epsilon}  E_{ij}(\mathbf{w})\overline{E_{ij}(\mathbf{w})}d\mathbf{x}+ \int_{\Omega^f_\epsilon}  E_{ij}(\mathbf{w})\overline{E_{ij}(\mathbf{w})}d\mathbf{x}+\int_{\Gamma_\epsilon} \left|\llbracket\mathbf{w}\rrbracket^f_s \right|^2 d\sigma_\epsilon(\mathbf{x})  \right).\notag
\end{align}
\end{lemma}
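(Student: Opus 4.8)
The plan is to prove \eqref{1.15} by a compactness–contradiction argument, reducing it to the rigidity of infinitesimal rigid displacements together with Korn's second inequality on each phase. Suppose the estimate fails. Then there is a sequence $\mathbf{w}_n\in V$ with $\int_{\Omega_\epsilon^s}|\mathbf{w}_n|^2\,d\mathbf{x}+\int_{\Omega_\epsilon^f}|\mathbf{w}_n|^2\,d\mathbf{x}=1$ while the right-hand side of \eqref{1.15}, with the constant $n$ replacing $K$, tends to $0$; that is, $E_{ij}(\mathbf{w}_n)\to 0$ in $L^2(\Omega_\epsilon^s)$ and in $L^2(\Omega_\epsilon^f)$, and $\llbracket\mathbf{w}_n\rrbracket_s^f\to 0$ in $L^2(\Gamma_\epsilon)$. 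By Korn's second inequality, applied on $\Omega_\epsilon^s$ and on each connected component of $\Omega_\epsilon^f$ (the hypotheses in Section~\ref{background} supply the required Lipschitz/$C^1$ regularity), $\|\nabla\mathbf{w}_n\|_{L^2(\Omega_\epsilon^s)}+\|\nabla\mathbf{w}_n\|_{L^2(\Omega_\epsilon^f)}$ stays bounded, so $\mathbf{w}_n$ is bounded in $H^1(\Omega_\epsilon^s\cup\Omega_\epsilon^f)$. Passing to a subsequence, $\mathbf{w}_n\rightharpoonup\mathbf{w}$ weakly in $H^1(\Omega_\epsilon^s\cup\Omega_\epsilon^f)$ and, by Rellich's theorem, $\mathbf{w}_n\to\mathbf{w}$ strongly in $L^2(\Omega_\epsilon^s\cup\Omega_\epsilon^f)$; in particular $\int_{\Omega_\epsilon^s}|\mathbf{w}|^2\,d\mathbf{x}+\int_{\Omega_\epsilon^f}|\mathbf{w}|^2\,d\mathbf{x}=1$, so $\mathbf{w}\neq 0$.

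Next I would identify $\mathbf{w}$. By weak lower semicontinuity, $E_{ij}(\mathbf{w})=0$ on both $\Omega_\epsilon^s$ and $\Omega_\epsilon^f$, so $\mathbf{w}$ is an infinitesimal rigid displacement $\mathbf{x}\mapsto\mathbf{a}+\mathbf{b}\times\mathbf{x}$ on $\Omega_\epsilon^s$ (which is connected) and on each connected component of $\Omega_\epsilon^f$. Since $\mathbf{w}_n\big|_{\partial\Omega}=0$ and the trace onto $\partial\Omega$ is continuous under weak $H^1$-convergence, $\mathbf{w}\big|_{\partial\Omega}=0$; as $\partial\Omega_\epsilon^s\cap\partial\Omega$ has positive surface measure and a rigid displacement vanishing on a subset of a hypersurface of positive $2$-dimensional measure vanishes identically, $\mathbf{w}=0$ on $\Omega_\epsilon^s$, hence $\mathbf{w}\big|_{\Gamma_\epsilon^s}=0$. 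Using the compactness of the trace map $H^1(\Omega_\epsilon^s\cup\Omega_\epsilon^f)\to L^2(\Gamma_\epsilon)$, we have $\mathbf{w}_n\big|_{\Gamma_\epsilon^s}\to 0$ and $\mathbf{w}_n\big|_{\Gamma_\epsilon^f}\to\mathbf{w}\big|_{\Gamma_\epsilon^f}$ in $L^2(\Gamma_\epsilon)$; combining with $\llbracket\mathbf{w}_n\rrbracket_s^f\to 0$ gives $\mathbf{w}\big|_{\Gamma_\epsilon^f}=0$. Therefore, on each connected component $\mathcal{O}$ of $\Omega_\epsilon^f$, $\mathbf{w}$ is a rigid displacement whose trace vanishes on $\partial\mathcal{O}\cap\Gamma_\epsilon$ (and on $\partial\mathcal{O}\cap\partial\Omega$, if that is nonempty), a set of positive surface measure, so $\mathbf{w}=0$ on $\mathcal{O}$. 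Thus $\mathbf{w}=0$ on $\Omega_\epsilon^s\cup\Omega_\epsilon^f$, contradicting $\mathbf{w}\neq 0$; hence \eqref{1.15} holds with some $K=K(\epsilon)>0$.

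I expect the main obstacle to be the treatment of the fluid part $\Omega_\epsilon^f$, which need not be connected and whose components need not meet $\partial\Omega$: the interface term in \eqref{Vnorm} is precisely what is needed to control the rigid-displacement component on those components, by transferring the (vanishing) solid-side trace to the fluid side through the jump. The supporting facts I would state carefully are Korn's second inequality on Lipschitz domains, the compactness of the trace operator $H^1(\mathcal{O})\to L^2(\partial\mathcal{O})$, and the elementary rigidity fact that a nonzero affine field $\mathbf{a}+\mathbf{b}\times\mathbf{x}$ cannot vanish on a subset of positive surface measure. Alternatively, one can avoid compactness and argue directly: use Korn's inequality with zero boundary data on the connected set $\Omega_\epsilon^s$ to bound $\|\mathbf{w}\|_{H^1(\Omega_\epsilon^s)}$ by $\|E(\mathbf{w})\|_{L^2(\Omega_\epsilon^s)}$; on each fluid component $\mathcal{O}$ use Korn–Poincar\'e, $\|\mathbf{w}-\mathbf{r}\|_{H^1(\mathcal{O})}\le C\|E(\mathbf{w})\|_{L^2(\mathcal{O})}$ with $\mathbf{r}$ the projection of $\mathbf{w}$ onto the rigid displacements; and control $\mathbf{r}$ through its trace on $\partial\mathcal{O}$, which the trace theorem and the jump term bound by $\|\mathbf{w}\|_{H^1(\Omega_\epsilon^s)}+\|E(\mathbf{w})\|_{L^2(\mathcal{O})}+\|\llbracket\mathbf{w}\rrbracket_s^f\|_{L^2(\Gamma_\epsilon)}$. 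Finiteness of the number of components for fixed $\epsilon$ (as $\Omega$ is bounded) then gives a single constant $K(\epsilon)$.
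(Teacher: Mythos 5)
Your proposal is correct and follows essentially the same route as the paper: a contradiction argument with a normalized sequence, Korn's inequality to get $H^1$ compactness, weak lower semicontinuity to conclude the limit has vanishing strain and vanishing jump, and then rigidity (zero on the solid phase via the boundary condition on $\partial\Omega_\epsilon^s\cap\partial\Omega$, transferred to the fluid phase through the zero jump) to contradict the unit $L^2$ norm. Your treatment is somewhat more detailed than the paper's (explicit handling of fluid components not touching $\partial\Omega$, trace compactness, and the alternative direct Korn--Poincar\'e argument), but the underlying idea is the same.
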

\begin{proof}
We prove this by contradiction. Suppose (\ref{1.15}) is not true.  Then, there exists a sequence $\left\{\mathbf{w}^k\right\}_{k=1}^{\infty}$ in V, with $\norm{\mathbf{w}^k}_{\mathbf{L^2}( \Omega_\epsilon^f \times \Omega_\epsilon^s )}=1$, satisfying: $$\int_{  \Omega_\epsilon^f \times \Omega_\epsilon^s}E_{ij}(\mathbf{w}^k)\overline{E_{ij}(\mathbf{w}^k)}dx+\int_{\Gamma_\epsilon} \left|\llbracket\mathbf{w}\rrbracket^f_s \right|^2 d\sigma_\epsilon(x) \rightarrow0,\hspace{4mm}\text{ as $k\rightarrow\infty$.}$$
Hence, by {the Korn's inequality in} Lemma \ref{lem:Korn}, there exists $\mathbf{w^*}\in V$ such that $\mathbf{w}^k\rightarrow\mathbf{w^*}$ weakly in $V$ and $\mathbf{w}^k\rightarrow\mathbf{w^*}$ strongly in $L^2(\Omega)$.  Therefore, $\norm{\mathbf{w^*}}_{L^2(\Omega)}=1$ and $\norm{\mathbf{w}^k}_{V}\leq C$.  By Proposition~1.1 on page~8 of \cite{SanchezPalencia1980}, we have $\norm{\mathbf{w^*}}_{ V}\leq\liminf_k\norm{\mathbf{w}^k}_{V}$.  From the latter, it follows that:
\begin{align*}
&1+\int_{\Omega_\epsilon^f \times \Omega_\epsilon^s}E_{ij}(\mathbf{w^*})\overline{E_{ij}(\mathbf{w^*})}dx+ \int_{\Gamma_\epsilon} \left|\llbracket\mathbf{w^*}\rrbracket^f_s \right|^2 d\sigma_\epsilon(x) \nonumber\\
&\quad\leq\liminf_k\left(1+\int_{  \Omega_\epsilon^f \times \Omega_\epsilon^s}E_{ij}(\mathbf{w}^k)\overline{E_{ij}(\mathbf{w}^k)}dx + \int_{\Gamma_\epsilon} \left|\llbracket\mathbf{w}^k\rrbracket^f_s \right|^2 d\sigma_\epsilon(x)\right),
\end{align*}
which, in turn, implies that:
\begin{equation}
\label{zero-jump}
    \int_{ \Omega_\epsilon^f \times \Omega_\epsilon^s }E_{ij}(\mathbf{w^*})\overline{E_{ij}(\mathbf{w^*})}dx=0\,\,\text{  and   }\,\, \int_{\Gamma_\epsilon} \left|\llbracket\mathbf{w^*}\rrbracket^f_s \right|^2 d\sigma_\epsilon(x)=0.
\end{equation}
Therefore $\mathbf{w^*}$ is a rigid body motion in each phase.  Since  $\left.\mathbf{w^*}\right|_{ \partial \Omega_\epsilon^s  \cap\partial \Omega}=\mathbf{0}$, we have that 
$\mathbf{w^*}=\mathbf{0}$ in $ \Omega_\epsilon^s$ . By the interface integral in \eqref{zero-jump}, we must also have  $\mathbf{w^*}=\mathbf{0}$ in $ \Omega_\epsilon^f$. This contradicts $\norm{\mathbf{w^*}}_{L^2(\Omega)}=1$.
\end{proof}
The proof above demonstrates the importance of including the interface jump in the norm of the space $V$ in (\ref{V}).  Without the interface term, the lemma would not be true. 

{With this lemma, we are ready to prove Theorem \ref{existence_lap}}. 
\begin{proof}(of Theorem \ref{existence_lap})
{The fact that $\lambda$ is a complex number and the appearance of various orders of $\lambda$ in the expression of $a^\epsilon(\cdot,\cdot)$ prevent a direct application of the Lax-Milgram lemma (Lemma \ref{laxmil}). Noticing that $Re(\lambda)$ and $Re(\frac{1}{\lambda})$ have the same sign, we recast the variational problem (\ref{LVF}) to an equivalent problem by dividing} both sides of (\ref{LVF}) with $\lambda\ne 0$:  
\begin{align}
\mbox{Find } \mathbf{\hat{u}_\epsilon}\in V &\mbox{ such that, for all } \mathbf{w}\in V,  \mbox{ we have:}\notag\\
\label{LVF2}
\frac{1}{\lambda}a^\epsilon(\mathbf{\hat{u}_\epsilon},\mathbf{w})&=\lambda\int_{\Omega_\epsilon^s}\rho^s \hat{u}_\epsilon^i\overline{w^i}d\mathbf{x}+\lambda\int_{\Omega_\epsilon^f}\rho^f\hat{u}_\epsilon^i\overline{w^i}d\mathbf{x}+\frac{1}{\lambda}c^{\epsilon}(\mathbf{\hat{u}_\epsilon},\mathbf{w})\notag\\
&\quad+\epsilon^2b^{\epsilon}(\mathbf{\hat{u}_\epsilon},\mathbf{w})+\epsilon\int_{\Gamma_\epsilon}\alpha\llbracket\mathbf{\hat{u}_\epsilon}\rrbracket_s^f\cdot \overline{\llbracket\mathbf{w}\rrbracket}_s^f\,d\sigma_\epsilon(\mathbf{x})\notag\\
&=\frac{1}{\lambda}\int_{\Omega}\hat{f}^i\overline{w^i}d\mathbf{x}.
\end{align}

{To show the coercivity of} $\left(\frac{1}{\lambda}\right)a^\epsilon(\mathbf{w},\mathbf{w})$, for $\mathbf{w}\in V$, with $Re(\lambda)>0$ and $\epsilon>0$, we observe that the properties of the coefficients $a_{ijkl}$, $\eta$ and $\mu$ in (\ref{a-coercive}) and (\ref{etamu}) imply the following inequality
\begin{align}
\label{coercivity}
&{\rm Re}\left( \frac{1}{\lambda}c^\epsilon(\mathbf{w},\mathbf{w})+\epsilon^2b^\epsilon\left(\mathbf{w},\mathbf{w}\right) +\epsilon\int_{\Gamma_\epsilon}\alpha\left|\llbracket\mathbf{w}\rrbracket_s^f\right|^2\,d\sigma_\epsilon(\mathbf{x})\right)\notag\\
& >Re\left(\frac{1}{\lambda}\right)\int_{\Omega_\epsilon^s}cE_{ij}(\mathbf{w})\overline{E_{ij}(\mathbf{w})}d\mathbf{x}\notag\\
&\quad+\epsilon^2\mu\left\{\int_{\Omega_\epsilon^f}\left[-\frac{ 2}{3}|\nabla\cdot\mathbf{w}|^2+ E_{ij}(\mathbf{w})\overline{E_{ij}(\mathbf{w})}\right]d\mathbf{x}.\right.\notag\\
&\quad\left.+\int_{\Omega_\epsilon^f} E_{ij}(\mathbf{w})\overline{E_{ij}(\mathbf{w})}d\mathbf{x}\right\}  +\epsilon\alpha \int_{\Gamma_\epsilon}\left|\llbracket\mathbf{w}\rrbracket_s^f\right|^2\,d\sigma_\epsilon(\mathbf{x})\notag\\
&\geq\min( Re\left(\frac{c}{\lambda}\right),\epsilon^2\mu , \epsilon \alpha) \left(\int_{\Omega_\epsilon^s\times\Omega_\epsilon^f}E_{ij}(\mathbf{w})\overline{E_{ij}(\mathbf{w})}d\mathbf{x} + \int_{\Gamma_\epsilon}\left|\llbracket\mathbf{w}\rrbracket_s^f\right|^2\,d\sigma_\epsilon(\mathbf{x})\right).
\end{align}
%
We note that Lemma~\ref{lem:Korn} implies that there exist $\gamma_s',\,\gamma_f'>0$ such that:
\begin{eqnarray}
\label{lemma4-1-s}
\int_{\Omega_\epsilon^s}E_{ij}(\mathbf{w})\overline{E_{ij}(\mathbf{w})}d\mathbf{x}+\int_{\Omega_\epsilon^s}\mathbf{w}\cdot\overline{\mathbf{w}}d\mathbf{x}\geq\gamma_s'\norm{\mathbf{w}}^2_{ H^1(\Omega_\epsilon^s)}\\
\label{lemma4-1-f}
\int_{\Omega_\epsilon^f}E_{ij}(\mathbf{w})\overline{E_{ij}(\mathbf{w})}d\mathbf{x}+\int_{\Omega_\epsilon^f}\mathbf{w}\cdot\overline{\mathbf{w}}d\mathbf{x}\geq\gamma_f'\norm{\mathbf{w}}^2_{ H^1(\Omega_\epsilon^f)}.
\end{eqnarray}
Also, as long as $\partial \Omega_\epsilon^s\cap\partial\Omega\neq\emptyset$ , by Lemma~\ref{rem:bsp}, there exists $K>0$ such that:
\begin{equation}
\label{coerSolid}
\int_{\Omega_\epsilon^s  \times \Omega_\epsilon^f }w^i\overline{w^i}d\mathbf{x}\leq K \left(\int_{\Omega_\epsilon^s  \times \Omega_\epsilon^f}E_{ij}(\mathbf{w})\overline{E_{ij}(\mathbf{w})} d\mathbf{x}  +\int_{\Gamma_\epsilon} \left|\llbracket\mathbf{w}\rrbracket^f_s \right|^2 d\sigma_\epsilon(\mathbf{x}) \right),
\end{equation}
for all $ \mathbf{w}\in V$.  

Using \eqref{lemma4-1-s}, \eqref{lemma4-1-f}, and (\ref{coerSolid}) in (\ref{coercivity}), we have:
\begin{align}
\label{coercivity2}
&{\rm Re}\left(\left(\frac{1}{\lambda}\right)c^\epsilon(\mathbf{w},\mathbf{w})+\epsilon^2 b^\epsilon\left(\mathbf{w},\mathbf{w}\right)+\epsilon\int_{\Gamma_\epsilon}\alpha\left|\llbracket\mathbf{w}\rrbracket_s^f\right|^2\,d\sigma_\epsilon(\mathbf{x})\right)\notag\\
&\geq\min\left(Re\left(\frac{c}{\lambda}\right),\epsilon^2\mu, \epsilon\alpha\right)\left[\frac{1}{2}\int_{\Omega_\epsilon^s \times \Omega_\epsilon^f}E_{ij}(\mathbf{w})\overline{E_{ij}(\mathbf{w})}d\mathbf{x}\right.\notag\\
&\quad\left.  +\frac{1}{2 K}\int_{\Omega_\epsilon^s \times \Omega_\epsilon^f}w^i\overline{w^i}d\mathbf{x}+\frac{1}{2}\int_{\Gamma_\epsilon}\left|\llbracket\mathbf{w}\rrbracket_s^f\right|^2\,d\sigma_\epsilon(\mathbf{x})\right]\notag\\
&\geq  C' \| \mathbf{w}\|^2_V, 
\end{align}
where $C':=\min(Re\left(\frac{c}{\lambda}\right),\epsilon^2\mu ,\epsilon\alpha)\cdot\min\left(\frac{1}{2},\frac{1}{ 2K}\right)\cdot\min\left(\gamma_f',\gamma_s',\frac{1}{2}\right)$.
Therefore, for all $\mathbf{w}\in V$, we have:
\begin{align*}
&{\rm Re}\left(\frac{1}{\lambda}a^\epsilon(\mathbf{w}, \mathbf{w})\right)
	\geq C'\norm{\mathbf{w}}_V^2.
\end{align*}
This proves the coercivity of the sesquilinear form in  \eqref{LVF2}. The boundedness of this form can be checked easily by a repeated application of the Cauchy-Schwarz inequality. Therefore, by Lemma~\ref{laxmil} (Lax-Milgram lemma), there exists a unique solution of \eqref{LVF2} and, hence, of \eqref{LVF}, for any fixed $\epsilon>0$ and $\lambda$, with $Re(\lambda)>0$.
\end{proof}
\section{Uniform bounds and the weak limit}
\label{sec:premres}
{We have shown that for any fixed $\lambda$ with $\lambda>0$, there is a unique solution $\mathbf{\hat{u}}_\epsilon$ for each $\epsilon>0$. In order to apply the compactness results of the two-scale convergence, cf. Definition \ref{2cale-conv}, we need to estimate the  sequence $\{\mathbf{\hat{u}}_\epsilon\}$ and their derivatives so as to derive the bounds which are uniform in $\epsilon$.}

{
The main result in this section is the following theorem regarding the uniform bounds of the  sequence of solutions $\{\mathbf{\hat{u}}_\epsilon\}$.
\begin{theorem}
\label{uniform_bound}
 For every fixed $\lambda$ such that $Re(\lambda)>0$, the  sequence of solutions  $\{\mathbf{\hat{u}}_\epsilon\}$ satisfies the following estimates.
\begin{eqnarray}
&&\norm{\mathbf{\hat{u}_\epsilon}}_{\mathbf{L^2}(\Omega)}\leq C \hspace{3mm}\forall\epsilon,\label{unifbounduep}\\
\label{unifboundu}
&&\epsilon\norm{\mathbf{\hat{u}_\epsilon}}_{V}\leq C\,,\hspace{3mm}\forall\epsilon,\\
\label{unifbounddivu}
&&\norm{{\rm div}\,\mathbf{\hat{u}_\epsilon}}_{L^2(\Omega_\epsilon^s\cup \Omega_\epsilon^f)}\leq C\,,\hspace{3mm}\forall\epsilon,\\
\label{uni_deri_solid}
&&\norm{\nabla\mathbf{u}_\epsilon}_{\mathbf{L}^2(\Omega_\epsilon^s)}\leq C,\hspace{3mm}\text{ for all $0<\epsilon<\epsilon_o$.}	
\end{eqnarray}
\end{theorem}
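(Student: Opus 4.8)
The plan is to run the natural energy estimate on the Laplace-transformed problem. Take $\mathbf{w}=\mathbf{\hat{u}_\epsilon}$ in \eqref{LVF}, divide by $\lambda$, and take real parts, exactly as in the proof of Theorem~\ref{existence_lap}. By the symmetry and coercivity \eqref{symmetry}--\eqref{a-coercive} of $a_{ijkl}$, by \eqref{etamu}, and by $\alpha>0$, each of $c^\epsilon(\mathbf{\hat{u}_\epsilon},\mathbf{\hat{u}_\epsilon})$, $b^\epsilon(\mathbf{\hat{u}_\epsilon},\mathbf{\hat{u}_\epsilon})$ and $\norm{\llbracket\mathbf{\hat{u}_\epsilon}\rrbracket_s^f}^2_{L^2(\Gamma_\epsilon)}$ is real and nonnegative; the inertial term equals ${\rm Re}(\lambda)\big(\rho^s\norm{\mathbf{\hat{u}_\epsilon}}^2_{\mathbf{L}^2(\Omega_\epsilon^s)}+\rho^f\norm{\mathbf{\hat{u}_\epsilon}}^2_{\mathbf{L}^2(\Omega_\epsilon^f)}\big)$; and ${\rm Re}(1/\lambda)={\rm Re}(\lambda)/|\lambda|^2>0$. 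Together with the Cauchy--Schwarz bound for the right-hand side, this gives
\[
{\rm Re}(\lambda)\big(\rho^s\norm{\mathbf{\hat{u}_\epsilon}}^2_{\mathbf{L}^2(\Omega_\epsilon^s)}+\rho^f\norm{\mathbf{\hat{u}_\epsilon}}^2_{\mathbf{L}^2(\Omega_\epsilon^f)}\big)+{\rm Re}\big(\tfrac1\lambda\big) c^\epsilon(\mathbf{\hat{u}_\epsilon},\mathbf{\hat{u}_\epsilon})+\epsilon^2 b^\epsilon(\mathbf{\hat{u}_\epsilon},\mathbf{\hat{u}_\epsilon})+\epsilon\alpha\norm{\llbracket\mathbf{\hat{u}_\epsilon}\rrbracket_s^f}^2_{L^2(\Gamma_\epsilon)}\le \tfrac{1}{|\lambda|}\norm{\mathbf{\hat{f}}}_{\mathbf{L}^2(\Omega)}\norm{\mathbf{\hat{u}_\epsilon}}_{\mathbf{L}^2(\Omega)},
\]
in which all four terms on the left are nonnegative, hence each is at most the right-hand side.

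Dropping all but the inertial term, using $\min(\rho^s,\rho^f)>0$, and canceling one factor of $\norm{\mathbf{\hat{u}_\epsilon}}_{\mathbf{L}^2(\Omega)}$ yields \eqref{unifbounduep}, with $C$ depending on $\lambda$ and on $\norm{\mathbf{\hat{f}}}_{\mathbf{L}^2(\Omega)}$ (finite by \eqref{f}) but not on $\epsilon$. Substituting \eqref{unifbounduep} back makes the right-hand side of the display $\le C$, so $c^\epsilon(\mathbf{\hat{u}_\epsilon},\mathbf{\hat{u}_\epsilon})\le C$, $\epsilon^2 b^\epsilon(\mathbf{\hat{u}_\epsilon},\mathbf{\hat{u}_\epsilon})\le C$, and $\epsilon\norm{\llbracket\mathbf{\hat{u}_\epsilon}\rrbracket_s^f}^2_{L^2(\Gamma_\epsilon)}\le C$, all uniformly in $\epsilon$. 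For \eqref{unifbounddivu}: by \eqref{a-coercive} and $\gamma=c_0^2\rho^f>0$, $c^\epsilon(\mathbf{\hat{u}_\epsilon},\mathbf{\hat{u}_\epsilon})\ge c\,\norm{\mathbf{E}(\mathbf{\hat{u}_\epsilon})}^2_{\mathbf{L}^2(\Omega_\epsilon^s)}+\gamma\norm{{\rm div}\,\mathbf{\hat{u}_\epsilon}}^2_{L^2(\Omega_\epsilon^f)}$, where $\norm{\mathbf{E}(\mathbf{\hat{u}_\epsilon})}^2_{\mathbf{L}^2(\Omega_\epsilon^s)}:=\int_{\Omega_\epsilon^s}E_{ij}(\mathbf{\hat{u}_\epsilon})\overline{E_{ij}(\mathbf{\hat{u}_\epsilon})}\,d\mathbf{x}$; hence $\norm{\mathbf{E}(\mathbf{\hat{u}_\epsilon})}_{\mathbf{L}^2(\Omega_\epsilon^s)}\le C$ and $\norm{{\rm div}\,\mathbf{\hat{u}_\epsilon}}_{L^2(\Omega_\epsilon^f)}\le C$. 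Since $|{\rm div}\,\mathbf{v}|\le\sqrt3\,|\mathbf{E}(\mathbf{v})|$ pointwise, also $\norm{{\rm div}\,\mathbf{\hat{u}_\epsilon}}_{L^2(\Omega_\epsilon^s)}\le\sqrt3\,\norm{\mathbf{E}(\mathbf{\hat{u}_\epsilon})}_{\mathbf{L}^2(\Omega_\epsilon^s)}\le C$, which is \eqref{unifbounddivu}.

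For \eqref{unifboundu}: \eqref{etamu} gives the pointwise bound $\eta|{\rm div}\,\mathbf{v}|^2+2\mu|\mathbf{E}(\mathbf{v})|^2\ge\delta\,|\mathbf{E}(\mathbf{v})|^2$ with $\delta:=\min(2\mu,\,2\mu+3\eta)>0$ (again using $|{\rm div}\,\mathbf{v}|^2\le 3|\mathbf{E}(\mathbf{v})|^2$), so $\epsilon^2 b^\epsilon(\mathbf{\hat{u}_\epsilon},\mathbf{\hat{u}_\epsilon})\le C$ forces $\epsilon^2\norm{\mathbf{E}(\mathbf{\hat{u}_\epsilon})}^2_{\mathbf{L}^2(\Omega_\epsilon^f)}\le C$. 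Korn's inequality applied cell by cell on the $\epsilon$-scaled periodicity cells gives, on both $\Omega_\epsilon^s$ and $\Omega_\epsilon^f$, an estimate of the form $\norm{\nabla\mathbf{v}}^2_{\mathbf{L}^2}\le C\big(\norm{\mathbf{E}(\mathbf{v})}^2_{\mathbf{L}^2}+\epsilon^{-2}\norm{\mathbf{v}}^2_{\mathbf{L}^2}\big)$; multiplying by $\epsilon^2$ and inserting \eqref{unifbounduep}, $\norm{\mathbf{E}(\mathbf{\hat{u}_\epsilon})}_{\mathbf{L}^2(\Omega_\epsilon^s)}\le C$, and $\epsilon^2\norm{\mathbf{E}(\mathbf{\hat{u}_\epsilon})}^2_{\mathbf{L}^2(\Omega_\epsilon^f)}\le C$ gives $\epsilon^2\norm{\nabla\mathbf{\hat{u}_\epsilon}}^2_{\mathbf{L}^2(\Omega_\epsilon^s)}+\epsilon^2\norm{\nabla\mathbf{\hat{u}_\epsilon}}^2_{\mathbf{L}^2(\Omega_\epsilon^f)}\le C$. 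Adding $\epsilon^2\norm{\mathbf{\hat{u}_\epsilon}}^2_{\mathbf{L}^2(\Omega)}\le C$ and $\epsilon^2\norm{\llbracket\mathbf{\hat{u}_\epsilon}\rrbracket_s^f}^2_{L^2(\Gamma_\epsilon)}=\epsilon\big(\epsilon\norm{\llbracket\mathbf{\hat{u}_\epsilon}\rrbracket_s^f}^2_{L^2(\Gamma_\epsilon)}\big)\le C$ (as $\epsilon$ is bounded) yields $\epsilon^2\norm{\mathbf{\hat{u}_\epsilon}}^2_V\le C$, i.e. \eqref{unifboundu}.

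Finally, \eqref{uni_deri_solid} is the one estimate for which the $\epsilon^2$ weight cannot be afforded, and it is here that the connectedness of $\tilde{Y_s}$ from hypothesis~(ii) is used. We already have the $\epsilon$-uniform bounds $\norm{\mathbf{E}(\mathbf{\hat{u}_\epsilon})}_{\mathbf{L}^2(\Omega_\epsilon^s)}\le C$ and $\norm{\mathbf{\hat{u}_\epsilon}}_{\mathbf{L}^2(\Omega_\epsilon^s)}\le C$, so the $\epsilon$-uniform Korn inequality on the connected perforated solid $\Omega_\epsilon^s$ (Lemma~\ref{lem:Korn}), valid for all $0<\epsilon<\epsilon_o$, gives $\norm{\nabla\mathbf{\hat{u}_\epsilon}}_{\mathbf{L}^2(\Omega_\epsilon^s)}\le C\big(\norm{\mathbf{E}(\mathbf{\hat{u}_\epsilon})}_{\mathbf{L}^2(\Omega_\epsilon^s)}+\norm{\mathbf{\hat{u}_\epsilon}}_{\mathbf{L}^2(\Omega_\epsilon^s)}\big)\le C$. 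The crux of the proof is keeping the powers of $\epsilon$ straight in the Korn estimates: the elementary cell-by-cell Korn inequality loses a factor $\epsilon^{-1}$ on the gradient, which is harmless for the $O(\epsilon^2)$-weighted bound \eqref{unifboundu} but useless for \eqref{uni_deri_solid}; obtaining the unweighted solid-gradient bound genuinely requires the uniform-in-$\epsilon$ Korn inequality for the connected perforated domain, which is why \eqref{uni_deri_solid} is asserted only for $\epsilon$ small.
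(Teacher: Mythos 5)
Your energy identity is sound and, up to the first three estimates, it runs essentially parallel to the paper: testing \eqref{LVF} with $\mathbf{\hat{u}_\epsilon}$, dividing by $\lambda$ and taking real parts does give nonnegative terms (your pointwise bound $\eta|{\rm div}\,\mathbf{v}|^2+2\mu|\mathbf{E}(\mathbf{v})|^2\ge \min(2\mu,2\mu+3\eta)|\mathbf{E}(\mathbf{v})|^2$ is correct), and deducing \eqref{unifbounduep} directly from that identity, then \eqref{unifbounddivu} from $c^\epsilon(\mathbf{\hat{u}_\epsilon},\mathbf{\hat{u}_\epsilon})\le C$ together with $|{\rm div}\,\mathbf{v}|\le\sqrt{3}\,|\mathbf{E}(\mathbf{v})|$, is a legitimate and in fact more self-contained route than the paper's, which obtains the $L^2$ and divergence bounds by transferring the time-domain estimates of Lemma~\ref{time_bound} to the Laplace domain. (Both routes implicitly need ${\rm Re}(\lambda)$ large enough for $\mathbf{\hat{f}}(\lambda)$ and $\mathbf{\hat{u}_\epsilon}(\lambda)$ to exist, which matches the paper's own usage.)

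The genuine gap is in how you justify the two Korn-type steps, which are exactly the nontrivial ingredients here. For \eqref{uni_deri_solid} you invoke an ``$\epsilon$-uniform Korn inequality on the connected perforated solid $\Omega_\epsilon^s$'' and cite Lemma~\ref{lem:Korn}; but Lemma~\ref{lem:Korn} is Korn's inequality on a \emph{fixed} domain $\Upsilon$, with a constant $\gamma'=\gamma'(\Upsilon)$, so applied to $\Omega_\epsilon^s$ it yields an $\epsilon$-dependent constant and proves nothing uniform. The uniformity is precisely what the extension result Theorem~\ref{thm:ext1} supplies: one extends $\mathbf{\hat{u}_\epsilon}$ from $\Omega_\epsilon^s$ to the fixed domain $\Omega_1$ with $\|\mathbf{E}(T_\epsilon\mathbf{\hat{u}_\epsilon})\|_{L^2(\Omega_1)}\le C\|\mathbf{E}(\mathbf{\hat{u}_\epsilon})\|_{L^2(\Omega_\epsilon^s)}$, $C$ independent of $\epsilon$, and then applies Korn in $H_0^1(\Omega_1)$ (no $L^2$ term needed); this is the paper's argument, and it is also why \eqref{uni_deri_solid} is stated only for $\epsilon<\epsilon_o$ and why connectedness of $\tilde{Y_s}$ enters. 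As written, you assert the conclusion of this hard lemma rather than prove it. A related weakness affects your cell-by-cell scaled Korn inequality used for \eqref{unifboundu}: the scaling $\|\nabla\mathbf{v}\|^2\le C(\|\mathbf{E}(\mathbf{v})\|^2+\epsilon^{-2}\|\mathbf{v}\|^2)$ is correct on interior cells $\epsilon(Y_I+\mathbf{k})$, but on the cells cut by $\partial\Omega$ the Korn constant of $\Omega\cap\epsilon(Y_I+\mathbf{k})$ is not uniformly controlled, so the summation over cells is not justified without extra work. The paper sidesteps both issues at once through Lemma~\ref{1-coercive} (extension operators $T_\epsilon^{s},T_\epsilon^{f}$ into $\Omega_1$ plus Korn there), from which \eqref{unifboundu} follows by multiplying by $\epsilon^2$ and using the bounds on $\epsilon^2 b^\epsilon$, $c^\epsilon$ and the interface term; replacing your Korn steps by that lemma and by Theorem~\ref{thm:ext1} closes the gap.
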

}
{
From this theorem, we see that the restriction of $\{\mathbf{\hat{u}}_\epsilon\}$ to the solid phase $\Omega_\epsilon^s$ are uniformly bounded in $\mathbf{H}^1(\Omega_\epsilon^s)$ while the restriction to the fluid phase are only bounded uniformly in $\mathbf{H}_{\rm{div}}(\Omega_\epsilon^f)$. This prompts the introduction of the well known space $E_0(\Omega_\epsilon^s\cup\Omega_\epsilon^f)$ defined in \eqref{E0def}.
}
\vspace{0.1in}

{The following lemmas are essential in proving Theorem \ref{uniform_bound}.
}
\begin{lemma}
\label{time_bound}
Let $\rho^*=\min\left\{\rho_f,\rho_s\right\}$ and $\mathbf{z_\epsilon}(t)=e^{-rt}\mathbf{u_\epsilon}(t)$, where $r>0$ is a fixed real number and $\mathbf{u_\epsilon}$ is the solution of  (\ref{TVF5}).  We have, for $r>\max\left\{{0},\frac{m}{2}\right\}$, where $m$ is the growth rate of $\mathbf{f}$ defined in \eqref{f}, that:
\begin{equation}
\label{unifbound}
	\rho^*\norm{\frac{\partial \mathbf{z_\epsilon}}{\partial t}}^2_{L^2(\Omega)}+\rho^*\norm{\mathbf{z_\epsilon}}^2_{L^2(\Omega)}+c^{\epsilon}(\mathbf{z_\epsilon},\mathbf{z_\epsilon})+\epsilon^2b^\epsilon\left(\mathbf{z_\epsilon},\mathbf{z_\epsilon}\right)+\alpha\epsilon\norm{\llbracket\mathbf{z_\epsilon}\rrbracket_s^f}^2_{L^2(\Gamma_\epsilon)}\leq C,
\end{equation}
for almost all $0<t<\infty$ and for all $\epsilon>0$. Moreover,
\begin{equation}
\label{unifbound2}
	\norm{\mathbf{z_\epsilon}}^2_{L^2(\Omega)}+\norm{{\rm div}(\mathbf{z_\epsilon})}^2_{L^2(\Omega)}\leq C,
\end{equation}
for all $\epsilon$ and almost all $t>0$.
\end{lemma}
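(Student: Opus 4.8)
\textbf{Proof plan for Lemma \ref{time_bound}.}
The plan is to derive the energy identity for the variational problem \eqref{TVF5} tested against $\partial \mathbf{u_\epsilon}/\partial t$, then multiply by an exponential weight to absorb the growth of the forcing. First I would take $\mathbf{w} = \partial \mathbf{u_\epsilon}/\partial t$ in \eqref{TVF5}, take the real part, and recognize that
\[
{\rm Re}\!\int_{\Omega_\epsilon^s}\rho^s \tfrac{\partial^2 u_\epsilon^i}{\partial t^2}\overline{\tfrac{\partial u_\epsilon^i}{\partial t}}\,d\mathbf{x}
= \tfrac{1}{2}\tfrac{d}{dt}\!\int_{\Omega_\epsilon^s}\rho^s \big|\tfrac{\partial \mathbf{u_\epsilon}}{\partial t}\big|^2 d\mathbf{x},
\]
with the analogous identity on $\Omega_\epsilon^f$, and that ${\rm Re}\,c^\epsilon(\mathbf{u_\epsilon}, \partial_t\mathbf{u_\epsilon}) = \tfrac12\tfrac{d}{dt}c^\epsilon(\mathbf{u_\epsilon},\mathbf{u_\epsilon})$ since $c^\epsilon$ is Hermitian (by the symmetry \eqref{symmetry}). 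The terms $\epsilon^2 b^\epsilon(\partial_t\mathbf{u_\epsilon},\partial_t\mathbf{u_\epsilon})$ and $\alpha\epsilon\norm{\llbracket\partial_t\mathbf{u_\epsilon}\rrbracket_s^f}_{L^2(\Gamma_\epsilon)}^2$ are nonnegative dissipation terms by \eqref{etamu} and $\alpha>0$, so after integrating in time from $0$ to $t$ and using the homogeneous initial conditions \eqref{ICs}, I obtain an energy balance of the form $\mathcal{E}_\epsilon(t) + (\text{dissipation}) = \int_0^t\!\int_\Omega f^i \overline{\partial_t u_\epsilon^i}$, where $\mathcal{E}_\epsilon(t)$ collects the kinetic and stored-elastic energies.

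Next I would pass to the exponentially weighted variable $\mathbf{z_\epsilon} = e^{-rt}\mathbf{u_\epsilon}$. The cleaner route is to substitute $\mathbf{u_\epsilon} = e^{rt}\mathbf{z_\epsilon}$ directly into the weak form and test with $e^{rt}\partial_t\mathbf{z_\epsilon} + r e^{rt}\mathbf{z_\epsilon}$ (i.e.\ $\partial_t\mathbf{u_\epsilon} = e^{rt}(\partial_t\mathbf{z_\epsilon} + r\mathbf{z_\epsilon})$); after dividing by $e^{2rt}$ the second-derivative term produces, in addition to $\tfrac12\tfrac{d}{dt}\norm{\partial_t\mathbf{z_\epsilon}}^2$ (weighted by $\rho$), the crucial positive contributions $r\norm{\partial_t\mathbf{z_\epsilon}}^2$ and $\tfrac{r}{2}\tfrac{d}{dt}(\text{something})$ plus a term $r^3\norm{\mathbf{z_\epsilon}}^2$ that provides coercive control on $\norm{\mathbf{z_\epsilon}}_{L^2}^2$ as well. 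Collecting terms, the weighted energy $\mathcal{E}_\epsilon^z(t)$ (kinetic energy of $\mathbf{z_\epsilon}$, elastic energy $c^\epsilon(\mathbf{z_\epsilon},\mathbf{z_\epsilon})$, and an $L^2$ term $\propto r^2\norm{\mathbf{z_\epsilon}}^2$) satisfies a differential inequality $\tfrac{d}{dt}\mathcal{E}_\epsilon^z(t) + (\text{nonneg.}) \le e^{-2rt}\int_\Omega f^i \overline{\partial_t u_\epsilon^i}\,d\mathbf{x}$; note all $\epsilon$-dependent coefficients appear on the dissipative (good) side, so constants stay $\epsilon$-uniform.

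The remaining work is to estimate the forcing term. Using $\partial_t\mathbf{u_\epsilon} = e^{rt}(\partial_t\mathbf{z_\epsilon} + r\mathbf{z_\epsilon})$, one has $e^{-2rt}\big|\int_\Omega \mathbf{f}\cdot\overline{\partial_t\mathbf{u_\epsilon}}\big| \le e^{-rt}\norm{\mathbf{f}(t)}_{L^2(\Omega)}\big(\norm{\partial_t\mathbf{z_\epsilon}}_{L^2(\Omega)} + r\norm{\mathbf{z_\epsilon}}_{L^2(\Omega)}\big)$, and by \eqref{f} we have $e^{-rt}\norm{\mathbf{f}(t)}_{L^2(\Omega)} \le \sqrt{K}\, e^{(m/2 - r)t}$, which is bounded (in fact integrable) precisely because $r > \max\{0, m/2\}$. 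A Young's inequality splits the right side into a piece absorbed by the positive $r\norm{\partial_t\mathbf{z_\epsilon}}^2 + r^3\norm{\mathbf{z_\epsilon}}^2$ on the left and a piece bounded by $C e^{(m-2r)t} \le C$; Gr\"onwall's inequality (or just direct integration, since the inhomogeneity is now integrable over $(0,\infty)$) then yields $\mathcal{E}_\epsilon^z(t) \le C$ uniformly in $t$ and $\epsilon$, which is exactly \eqref{unifbound}. Finally, \eqref{unifbound2} follows because $\norm{{\rm div}(\mathbf{z_\epsilon})}_{L^2(\Omega_\epsilon^f)}^2$ is controlled by the $\gamma(\nabla\cdot)(\nabla\cdot)$ part of $c^\epsilon(\mathbf{z_\epsilon},\mathbf{z_\epsilon})$ and $\norm{{\rm div}(\mathbf{z_\epsilon})}_{L^2(\Omega_\epsilon^s)}^2 = \norm{E_{ii}(\mathbf{z_\epsilon})}_{L^2(\Omega_\epsilon^s)}^2$ is controlled by the coercive elastic part via \eqref{a-coercive}, together with the $L^2$ bound on $\mathbf{z_\epsilon}$ already obtained. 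The main obstacle I anticipate is purely bookkeeping: correctly tracking the several lower-order cross terms generated by the $e^{rt}$-substitution in the second-derivative term and verifying that the $r$-dependent positive terms genuinely dominate them for $r$ large, all while keeping every constant independent of $\epsilon$ — the slip interface term and viscous term cause no trouble since they have a favorable sign, but one must be careful that they are not needed for coercivity (they are not, thanks to the $\rho^*\norm{\mathbf{z_\epsilon}}^2$ and elastic terms).
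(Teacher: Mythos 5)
Your overall strategy (exponentially weighted energy estimate, absorption of the forcing via \eqref{f} and $r>\max\{0,m/2\}$, Young/Gr\"onwall, homogeneous initial data) is the same as the paper's, and the parts of \eqref{unifbound} you do treat — the kinetic term, the $L^2$ term, and $c^\epsilon(\mathbf{z_\epsilon},\mathbf{z_\epsilon})$ — would come out correctly, as would \eqref{unifbound2}. The genuine gap is in which quantities end up inside the bounded energy. The estimate \eqref{unifbound} also asserts bounds on $\epsilon^2 b^\epsilon(\mathbf{z_\epsilon},\mathbf{z_\epsilon})$ and $\alpha\epsilon\norm{\llbracket\mathbf{z_\epsilon}\rrbracket_s^f}^2_{L^2(\Gamma_\epsilon)}$, but with your choice of test function $\partial_t\mathbf{u_\epsilon}=e^{rt}(\partial_t\mathbf{z_\epsilon}+r\mathbf{z_\epsilon})$ the viscous and slip contributions appear only as the nonnegative dissipation $\epsilon^2 b^\epsilon(\partial_t\mathbf{z_\epsilon}+r\mathbf{z_\epsilon},\partial_t\mathbf{z_\epsilon}+r\mathbf{z_\epsilon})$ and $\epsilon\alpha\norm{\llbracket\partial_t\mathbf{z_\epsilon}+r\mathbf{z_\epsilon}\rrbracket_s^f}^2$, which you then discard; your closing remark that these terms "are not needed" is exactly where the argument falls short of the statement, since they are among the five quantities to be bounded, and neither is controlled by $c^\epsilon$ (which only sees ${\rm div}$ in the fluid) nor by the $L^2$ bounds.

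The paper avoids this by testing with $\mathbf{w}=\partial_t\mathbf{z_\epsilon}$ after substituting $\mathbf{u_\epsilon}=e^{rt}\mathbf{z_\epsilon}$: then the cross terms $r\epsilon^2 b^\epsilon(\mathbf{z_\epsilon},\partial_t\mathbf{z_\epsilon})$ and $r\epsilon\alpha\int_{\Gamma_\epsilon}\llbracket\mathbf{z_\epsilon}\rrbracket_s^f\cdot\overline{\llbracket\partial_t\mathbf{z_\epsilon}\rrbracket_s^f}\,d\sigma_\epsilon$ are exact time derivatives of $\tfrac{r}{2}\epsilon^2 b^\epsilon(\mathbf{z_\epsilon},\mathbf{z_\epsilon})$ and $\tfrac{r}{2}\epsilon\alpha\norm{\llbracket\mathbf{z_\epsilon}\rrbracket_s^f}^2_{L^2(\Gamma_\epsilon)}$, so both quantities sit inside the bounded energy and \eqref{unifbound} follows in one stroke. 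Your route can be repaired either by switching to this test function, or by adding an extra step: integrating your dissipation in time gives $\int_0^\infty\epsilon^2 b^\epsilon(\partial_s\mathbf{z_\epsilon}+r\mathbf{z_\epsilon},\partial_s\mathbf{z_\epsilon}+r\mathbf{z_\epsilon})\,ds\leq C$, and since $\mathbf{z_\epsilon}(t)=e^{-rt}\int_0^t e^{rs}(\partial_s\mathbf{z_\epsilon}+r\mathbf{z_\epsilon})\,ds$, Minkowski's integral inequality for the seminorm induced by $b^\epsilon$ together with Cauchy--Schwarz in time yields $\epsilon^2 b^\epsilon(\mathbf{z_\epsilon}(t),\mathbf{z_\epsilon}(t))\leq C/(2r)$ pointwise in $t$, and likewise for the interface seminorm; but as written the proposal does not establish two of the five terms in \eqref{unifbound}.
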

\begin{proof}
From the definition of $\mathbf{z_\epsilon}(t)$, we have:
\begin{eqnarray*}
\frac{\partial \mathbf{u_\epsilon}}{\partial t}&=&e^{rt}\left(\frac{\partial \mathbf{z_\epsilon}}{\partial t}+r\mathbf{z_\epsilon}\right),\\
\frac{\partial^2 \mathbf{u_\epsilon}}{\partial t^2}&=&e^{rt}\left(\frac{\partial^2 \mathbf{z_\epsilon}}{\partial t^2}+2r\frac{\partial \mathbf{z_\epsilon}}{\partial t}+r^2\mathbf{z_\epsilon}\right).
\end{eqnarray*}

Plugging these into (\ref{TVF5}), and taking $\mathbf{w}=\displaystyle\frac{\partial \mathbf{z_\epsilon}}{\partial t}$, we obtain: 
\begin{align*}
	e^{-rt}\int_{\Omega}\mathbf{f}\cdot\overline{\frac{\partial \mathbf{z_\epsilon}}{\partial t}}d\mathbf{x}
	&=\int_{\Omega}\rho\frac{\partial^2 \mathbf{z_\epsilon}}{\partial t^2}\cdot\overline{\frac{\partial \mathbf{z_\epsilon}}{\partial t}}d\mathbf{x}+2r\int_{\Omega}\rho\left|\frac{\partial \mathbf{z_\epsilon}}{\partial t}\right|^2d\mathbf{x}+r^2\int_{\Omega}\rho\mathbf{z_\epsilon}\cdot\overline{\frac{\partial \mathbf{z_\epsilon}}{\partial t}}d\mathbf{x}\notag\\
	&\quad+c^{\epsilon}\left(\mathbf{z_\epsilon},\frac{\partial \mathbf{z_\epsilon}}{\partial t}\right)+\epsilon^2b^\epsilon\left(\frac{\partial \mathbf{z_\epsilon}}{\partial t},\frac{\partial \mathbf{z_\epsilon}}{\partial t}\right)+r\epsilon^2b^\epsilon\left(\mathbf{z_\epsilon},\frac{\partial \mathbf{z_\epsilon}}{\partial t}\right)\notag\\
	&\quad+\epsilon\alpha\int_{\Gamma_\epsilon}\left|\left\llbracket\frac{\partial \mathbf{z_\epsilon}}{\partial t}\right\rrbracket_s^f\right|^2\,d\sigma_\epsilon(\mathbf{x})+r\epsilon\alpha\int_{\Gamma_\epsilon}\llbracket\mathbf{z_\epsilon}\rrbracket_s^f\cdot \overline{\left\llbracket\frac{\partial \mathbf{z_\epsilon}}{\partial t}\right\rrbracket}_s^f\,d\sigma_\epsilon(\mathbf{x}).
\end{align*}
%
%
Rearranging terms and applying the estimate of $\mathbf{f}$ in (\ref{f}) lead to:
\begin{align}
\label{TVF5-z2}
	&\frac{1}{2}\frac{d}{dt}\left[\int_{\Omega}\rho\left|\frac{\partial \mathbf{z_\epsilon}}{\partial t}\right|^2d\mathbf{x}+r^2\int_{\Omega}\rho|\mathbf{z_\epsilon}|^2d\mathbf{x}+c^{\epsilon}(\mathbf{z_\epsilon},\mathbf{z_\epsilon})\right.\notag\\
\
	&\left.\quad+r\epsilon^2b^\epsilon\left(\mathbf{z_\epsilon},\mathbf{z_\epsilon}\right)+r\epsilon\alpha\int_{\Gamma_\epsilon}\llbracket\mathbf{z_\epsilon}\rrbracket_s^f\cdot \overline{\llbracket\mathbf{z_\epsilon}\rrbracket}_s^f\,d\sigma_\epsilon(\mathbf{x})\right]\notag\\
	&=e^{-rt}\int_{\Omega}\mathbf{f}\cdot\overline{\frac{\partial \mathbf{z_\epsilon}}{\partial t}}d\mathbf{x}-2r\int_{\Omega}\rho\left|\frac{\partial \mathbf{z_\epsilon}}{\partial t}\right|^2d\mathbf{x}\notag\\
	&\quad-\epsilon^2b^\epsilon\left(\frac{\partial \mathbf{z_\epsilon}}{\partial t},\frac{\partial \mathbf{z_\epsilon}}{\partial t}\right)-\epsilon\alpha\int_{\Gamma_\epsilon}\left|\left\llbracket\frac{\partial \mathbf{z_\epsilon}}{\partial t}\right\rrbracket_s^f\right|^2\,d\sigma_\epsilon(\mathbf{x})\notag\\
	&\leq e^{-rt}\int_{\Omega}\mathbf{f}\cdot\overline{\frac{\partial \mathbf{z_\epsilon}}{\partial t}}d\mathbf{x}\notag\\
 &\leq  e^{-rt}\norm{\mathbf{f}}_{L^2(\Omega)}\norm{\frac{\partial \mathbf{z_\epsilon}}{\partial t}}_{L^2(\Omega)}\notag\\
	&\leq e^{-rt}\frac{ K^{1/2} e^{mt/2}}{\sqrt{\rho^*}}\left(\int_{\Omega}\rho\left|\frac{\partial \mathbf{z_\epsilon}}{\partial t}\right|^2dx\right)^{1/2}\notag\\
	&\leq e^{-rt}\frac{ K^{1/2} e^{mt/2}}{\sqrt{\rho^*}}\left(\int_{\Omega}\rho\left|\frac{\partial \mathbf{z_\epsilon}}{\partial t}\right|^2d\mathbf{x}+r^2\int_{\Omega_\epsilon}\rho|\mathbf{z_\epsilon}|^2d\mathbf{x}+c^{\epsilon}(\mathbf{z_\epsilon},\mathbf{z_\epsilon})\right.\notag\\
	&\left.\quad+r\epsilon^2b^\epsilon\left(\mathbf{z_\epsilon},\mathbf{z_\epsilon}\right)+r\epsilon\alpha\int_{\Gamma}\llbracket\mathbf{z_\epsilon}\rrbracket_s^f\cdot \overline{\llbracket\mathbf{z_\epsilon}\rrbracket}_s^f\,d\sigma_\epsilon(\mathbf{x})\right)^{1/2},
\end{align}
since $r$ and $\alpha$ are non-negative.  Due to the fact that: $$\frac{1}{2}\frac{d}{dt}(\cdot)=(\cdot)^{1/2}\frac{d}{dt}(\cdot)^{1/2},\hspace{4mm}\text{if $(\cdot)\geq0$,}$$  we can simplify (\ref{TVF5-z2}) to obtain
\begin{align*}
	&\frac{d}{dt}\left[\int_{\Omega}\rho\left|\frac{\partial \mathbf{z_\epsilon}}{\partial t}\right|^2d\mathbf{x}+r^2\int_{\Omega}\rho|\mathbf{z_\epsilon}|^2d\mathbf{x}+c^{\epsilon}(\mathbf{z_\epsilon},\mathbf{z_\epsilon})\right.\notag\\
	&\left.\quad+r\epsilon^2b^\epsilon\left(\mathbf{z_\epsilon},\mathbf{z_\epsilon}\right)+r\,\epsilon\,\alpha\int_{\Gamma_\epsilon}\llbracket\mathbf{z_\epsilon}\rrbracket_s^f\cdot \overline{\llbracket\mathbf{z_\epsilon}\rrbracket}_s^f\,d\sigma_\epsilon(\mathbf{x})\right]^{1/2}\notag\\
&\leq \frac{ K^{1/2} e^{(m/2-r)t}}{\sqrt{\rho^*}}.
\end{align*}
Because $\mathbf{z}_\epsilon|_{t=0}=\frac{\partial\mathbf{z}_\epsilon }{\partial t}\big|_{t=0}=0$, we have: 
\begin{align*}
	&\left(\int_{\Omega}\rho\left|\frac{\partial \mathbf{z_\epsilon}}{\partial t}\right|^2d\mathbf{x}+r^2\int_{\Omega}\rho|\mathbf{z_\epsilon}|^2d\mathbf{x}+c^{\epsilon}(\mathbf{z_\epsilon},\mathbf{z_\epsilon})+r\epsilon^2b^\epsilon\left(\mathbf{z_\epsilon},\mathbf{z_\epsilon}\right)\right.\notag\\
	&\left.\quad+r\epsilon\alpha\int_{\Gamma_\epsilon}\left[\mathbf{z_\epsilon}\right]_s^f\cdot \overline{\left[\mathbf{z_\epsilon}\right]}_s^f\,d\sigma_\epsilon(\mathbf{x})\right)^{1/2} \Big|_{t=T}
	\leq \frac{K^{1/2}}{\sqrt{\rho^*}\left(r-\frac{m}{2}\right)}\left(1-e^{(m/2-r)T}\right).
%
\end{align*}
Note that the bound does not depend on $\epsilon$. Therefore, for $\mathbf{z}_\epsilon$ with $r>\max(0,\frac{m}{2})$, \eqref{unifbound} must be true for $t>0$ a.e. and for all $\epsilon>0$.
Note that, for all $T>0$ and $r>\frac{m}{2}$, we have $0<e^{\frac{m-2r}{2}T}<e^0=1$.  This means: 
\begin{align*}
	&\int_{\Omega}\rho\left|\frac{\partial \mathbf{z_\epsilon}}{\partial t}\right|^2d\mathbf{x}+r^2\int_{\Omega}\rho|\mathbf{z_\epsilon}|^2d\mathbf{x}+c^{\epsilon}(\mathbf{z_\epsilon},\mathbf{z_\epsilon})\\
\
	&\quad+r\epsilon^2b^\epsilon\left(\mathbf{z_\epsilon},\mathbf{z_\epsilon}\right)+r\epsilon\alpha\int_{\Gamma_\epsilon}\llbracket\mathbf{z_\epsilon}\rrbracket_s^f\cdot \overline{\llbracket\mathbf{z_\epsilon}\rrbracket}_s^f\,d\sigma_\epsilon(\mathbf{x})\notag
\end{align*}
is uniformly bounded with respect to time $t$. 
{The bound stated in \eqref{unifbound2} then follows as a consequence of Lemma \ref{time_bound}, the definition of $c^\epsilon$ (see (\ref{c-eps})) and  (\ref{a-coercive}).}
\end{proof}
%
\begin{lemma}
For $r>\max({0},\frac{m}{2})$, we can extract a subsequence such that
\begin{equation*}
	\mathbf{z_\epsilon}\rightarrow \mathbf{z_0} \hspace{3mm} \text{ in $L^{\infty}(0,+\infty;E_0(\Omega_\epsilon^s\cup\Omega_\epsilon^f))$-weak star.}
\end{equation*}
	{Moreover, letting $\mathbf{u_0}(t):=\mathbf{z_0}(t)e^{rt}$, then there exists a subsequence of $\{\mathbf{u}_\epsilon\}$, denoted by the same symbol, which converges as follows}
\begin{align}
	\mathbf{\hat{u}_\epsilon}(\lambda)&\rightarrow \mathbf{\hat{u}_0}(\lambda)  &\text{ in \quad $E_0(\Omega_\epsilon^s\cup\Omega_\epsilon^f)$-weak for any $\lambda\in\mathbb{C}$, ${\rm Re}(\lambda)\ge\lambda_0>r$.}	\label{weakLap}\\
	\mathbf{u_\epsilon}&\rightarrow \mathbf{u_0}  &\text{ in \quad  $\mathbf{L^{\infty}}(0,T;E_0(\Omega_\epsilon^s\cup\Omega_\epsilon^f))$-weak star } \mbox{for any $T>0$.}		\label{weakstaru}
\end{align}
\end{lemma}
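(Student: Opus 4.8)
The plan is to derive everything from the single uniform estimate \eqref{unifbound2} in Lemma~\ref{time_bound}: $\mathbf{z_\epsilon}(t)=e^{-rt}\mathbf{u_\epsilon}(t)$ satisfies $\norm{\mathbf{z_\epsilon}(t)}^2_{L^2(\Omega)}+\norm{{\rm div}(\mathbf{z_\epsilon}(t))}^2_{L^2(\Omega_\epsilon^s\cup\Omega_\epsilon^f)}\le C$ for almost every $t>0$ and every $\epsilon$; equivalently, $\{\mathbf{z_\epsilon}\}$ is bounded in $L^\infty(0,+\infty;E_0(\Omega_\epsilon^s\cup\Omega_\epsilon^f))$ uniformly in $\epsilon$. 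Since $E_0$ carries an $\epsilon$-dependence, I would first pass to a fixed ambient space: the assignment $\mathbf{w}\mapsto(\mathbf{w},{\rm div}(\mathbf{w}))$ embeds $E_0(\Omega_\epsilon^s\cup\Omega_\epsilon^f)$ isometrically into $\mathbf{L}^2(\Omega)\times L^2(\Omega)$, and all convergences below are to be read there.

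Next I would extract the subsequence. For a separable Hilbert space $H$ one has $L^\infty(0,+\infty;H)\cong\big(L^1(0,+\infty;H)\big)^*$ with separable predual, so bounded sequences are sequentially compact in the weak star topology. Applying this to $t\mapsto(\mathbf{z_\epsilon}(t),{\rm div}(\mathbf{z_\epsilon}(t)))$ with $H=\mathbf{L}^2(\Omega)\times L^2(\Omega)$ produces a subsequence (not relabelled) and a pair $(\mathbf{z_0},\psi_0)$ to which it converges in the weak star sense. I would then identify $\psi_0$ with ${\rm div}(\mathbf{z_0})$ by testing against time-independent smooth compactly supported vector fields and using weak continuity of the divergence on $\mathbf{L}^2(\Omega)$, and recover $\mathbf{z_0}\cdot\mathbf{n}_\Omega=0$ on $\partial\Omega$ from weak continuity of the normal trace; this places $\mathbf{z_0}$ in $L^\infty(0,+\infty;E_0(\Omega_\epsilon^s\cup\Omega_\epsilon^f))$ and establishes the first assertion of the lemma.

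For \eqref{weakLap}, fix $\lambda$ with ${\rm Re}(\lambda)\ge\lambda_0>r$ and write $\mathbf{\hat{u}_\epsilon}(\lambda)=\int_0^{+\infty}e^{-(\lambda-r)t}\mathbf{z_\epsilon}(t)\,dt$ as a Bochner integral in $E_0$. The integrand has $E_0$-norm dominated by $Ce^{-({\rm Re}(\lambda)-r)t}\in L^1(0,+\infty)$, which yields simultaneously a uniform bound $\norm{\mathbf{\hat{u}_\epsilon}(\lambda)}_{E_0}\le C$ and, on pairing against any element $\phi$ of the predual with the $L^1$-weight $t\mapsto e^{-(\lambda-r)t}$, the convergence to $\mathbf{\hat{u}_0}(\lambda):=\int_0^{+\infty}e^{-(\lambda-r)t}\mathbf{z_0}(t)\,dt$; crucially this uses only the already-fixed subsequence, so the \emph{same} subsequence serves every admissible $\lambda$ at once. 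One then checks that $\mathbf{\hat{u}_0}(\lambda)$ is the Laplace transform of $\mathbf{u_0}(t)=e^{rt}\mathbf{z_0}(t)$, whose growth rate is at most $r<{\rm Re}(\lambda)$. Finally \eqref{weakstaru} follows because, for fixed $T>0$, multiplication by $e^{rt}$ is bounded on $(0,T)$ and hence maps $L^1(0,T;E_0)$ into itself, so $\mathbf{u_\epsilon}=e^{rt}\mathbf{z_\epsilon}$ converges to $e^{rt}\mathbf{z_0}=\mathbf{u_0}$ in $L^\infty(0,T;E_0(\Omega_\epsilon^s\cup\Omega_\epsilon^f))$-weak star.

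I expect the main obstacle to be technical bookkeeping rather than anything conceptually deep: keeping the $\epsilon$-dependence of $E_0$ under control (by working in the fixed pair space and correctly identifying ${\rm div}(\mathbf{z_0})$ as the weak limit of ${\rm div}(\mathbf{z_\epsilon})$, where the possible jump of $\mathbf{z_\epsilon}$ across $\Gamma_\epsilon$ must be accounted for), and arranging that one extracted subsequence serves all $\lambda$ with ${\rm Re}(\lambda)>\lambda_0$ — which is precisely why the limit is taken at the level of $\mathbf{z_\epsilon}$ in $L^\infty$ in time and then transferred to each $\mathbf{\hat{u}_\epsilon}(\lambda)$ by dominated convergence for Bochner integrals, rather than extracting a separate subsequence for each $\lambda$. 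Everything else reduces to Banach--Alaoglu and dominated convergence.
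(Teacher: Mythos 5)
Your proposal is correct and follows essentially the same route as the paper: the uniform bound \eqref{unifbound2} gives boundedness of $\{\mathbf{z_\epsilon}\}$ in $L^{\infty}(0,+\infty;E_0)$, Banach--Alaoglu yields the weak-star limit $\mathbf{z_0}$, and testing against predual elements of the form $e^{-st}\psi(\mathbf{x})$ with $s=\lambda-r$ transfers the convergence to the Laplace transforms for all admissible $\lambda$ along the single extracted subsequence, after which \eqref{weakstaru} follows by multiplying by $e^{rt}$ on $(0,T)$. Your additional care in embedding the $\epsilon$-dependent space $E_0(\Omega_\epsilon^s\cup\Omega_\epsilon^f)$ into the fixed space $\mathbf{L}^2(\Omega)\times L^2(\Omega)$ is a welcome refinement of a point the paper leaves implicit, but it does not change the argument.
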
	
\begin{proof}
	By virtue of (\ref{unifbound2}), the sequence $\left\{\mathbf{z_\epsilon}\right\}$ remains uniformly bounded in the space $L^{\infty}(0,+\infty;E_0(\Omega_\epsilon^s\cup\Omega_\epsilon^f))$, for all $r>\max\left\{{0},\frac{m}{2}\right\}$, i.e. for all $\phi\in L^1(0,+\infty,E_0(\Omega_\epsilon^s\cup\Omega_\epsilon^f))$, we have:
$$\lim_{\epsilon\rightarrow0}\int_0^\infty\langle \mathbf{z_\epsilon}(t,\cdot),\phi(t,\cdot)\rangle_{E_0(\Omega_\epsilon^s\cup\Omega_\epsilon^f)}dt=\int_0^\infty\langle \mathbf{z_0}(t,\cdot),\phi(t,\cdot)\rangle_{E_0(\Omega_\epsilon^s\cup\Omega_\epsilon^f)}dt;$$ and if $\phi(\mathbf{x},t)=e^{-st}\psi(\mathbf{x})$,  $s>0$, this is equivalent to: \begin{align*}\lim_{\epsilon\rightarrow0}\langle \mathbf{\hat{z}_\epsilon(s)},\psi(\mathbf{x})\rangle_{E_0(\Omega_\epsilon^s\cup\Omega_\epsilon^f)}&=\lim_{\epsilon\rightarrow0}\langle \int_0^\infty e^{-st}\mathbf{z_\epsilon}(t,\cdot)dt,\psi(\mathbf{x})\rangle_{E_0(\Omega_\epsilon^s\cup\Omega_\epsilon^f)}\\
    &=\langle \mathbf{\hat{z}_0}(s),\psi(\mathbf{x})\rangle_{E_0(\Omega_\epsilon^s\cup\Omega_\epsilon^f)}.
\end{align*}

By letting $\mathbf{u_0}(t)=\mathbf{z_0}(t)e^{rt}$, with $r>\max\{{0},m/2\}$, \eqref{weakLap} and \eqref{weakstaru} can be deduced.
\end{proof}
\begin{lemma}\label{1-coercive}
There exists a positive constant $C$, independent of $\epsilon$, such that:
\begin{equation*}
b^\epsilon\left(\mathbf{\hat{u}_\epsilon},\mathbf{\hat{u}_\epsilon}\right)+c^{\epsilon}(\mathbf{\hat{u}_\epsilon},\mathbf{\hat{u}_\epsilon})+\norm{\llbracket{\mathbf{\hat{u}}_\epsilon}\rrbracket_s^f}^{2}_{L^2(\Gamma_\epsilon)}\ge C \| \mathbf{\hat{u}_\epsilon}\|_V^2.
\end{equation*}
\end{lemma}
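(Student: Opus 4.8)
The plan is to combine the three coercivity-type facts already proved in the excerpt: the Korn inequalities \eqref{lemma4-1-s}--\eqref{lemma4-1-f}, the Poincar\'e-type estimate of Lemma~\ref{rem:bsp} (equivalently \eqref{coerSolid}), and the ellipticity of the coefficients in \eqref{a-coercive}, \eqref{etamu}. First I would bound each term on the left from below by a strain-plus-jump quantity. By \eqref{a-coercive}, $c^\epsilon(\mathbf{\hat{u}_\epsilon},\mathbf{\hat{u}_\epsilon})\ge c\int_{\Omega_\epsilon^s}E_{ij}(\mathbf{\hat{u}_\epsilon})\overline{E_{ij}(\mathbf{\hat{u}_\epsilon})}\,d\mathbf{x}$, and the argument already carried out in \eqref{coercivity} (using $\eta/\mu>-2/3$) gives $b^\epsilon(\mathbf{\hat{u}_\epsilon},\mathbf{\hat{u}_\epsilon})\ge \mu\int_{\Omega_\epsilon^f}E_{ij}(\mathbf{\hat{u}_\epsilon})\overline{E_{ij}(\mathbf{\hat{u}_\epsilon})}\,d\mathbf{x}$ after absorbing the $-\tfrac23|\nabla\cdot\mathbf{\hat{u}_\epsilon}|^2$ term into one copy of the strain norm. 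Hence the left-hand side dominates $\min(c,\mu,1)\bigl(\int_{\Omega_\epsilon^s\cup\Omega_\epsilon^f}|E(\mathbf{\hat{u}_\epsilon})|^2\,d\mathbf{x}+\|\llbracket\mathbf{\hat{u}_\epsilon}\rrbracket_s^f\|^2_{L^2(\Gamma_\epsilon)}\bigr)$.

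Next I would convert this strain-plus-jump bound into the full $V$-norm. Split the strain integral into two halves; use \eqref{coerSolid} from Lemma~\ref{rem:bsp} on one half together with the jump term to control $\|\mathbf{\hat{u}_\epsilon}\|_{\mathbf{L}^2(\Omega_\epsilon^s\cup\Omega_\epsilon^f)}^2$, and then feed the $L^2$ control plus the other half of the strain integral into the Korn inequalities \eqref{lemma4-1-s}, \eqref{lemma4-1-f} to recover $\|\mathbf{\hat{u}_\epsilon}\|^2_{H^1(\Omega_\epsilon^s)}+\|\mathbf{\hat{u}_\epsilon}\|^2_{H^1(\Omega_\epsilon^f)}$. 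Adding back the jump term, which is already controlled, reconstitutes $\|\mathbf{\hat{u}_\epsilon}\|_V^2$ as defined in \eqref{Vnorm}. This is essentially the chain of inequalities \eqref{coercivity2}, run without the $\lambda$-dependent weights; indeed, taking $\lambda$ real and positive in \eqref{coercivity2} and dropping the factor $\min(Re(c/\lambda),\epsilon^2\mu,\epsilon\alpha)$ would already give the claim up to renaming constants, but since here we want a constant independent of $\epsilon$ I would instead apply \eqref{coerSolid} and \eqref{lemma4-1-s}--\eqref{lemma4-1-f} directly, whose constants $K$, $\gamma_s'$, $\gamma_f'$ do not depend on $\epsilon$ (they come from the fixed reference cell geometry).

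The one point requiring care — and the likely main obstacle — is precisely the $\epsilon$-independence of the constant $K$ in \eqref{coerSolid}: Lemma~\ref{rem:bsp} as stated yields $K=K(\epsilon)$. So to get a genuinely uniform $C$ I would either invoke a scaling/unfolding argument transferring the estimate to the fixed cell $Y$ (where the analogous contradiction argument gives an $\epsilon$-free constant, then scaling back costs only powers of $\epsilon$ that can be tracked), or — more likely, given the phrasing of the surrounding text — accept that the constant in the present lemma may depend on $\epsilon$, since this lemma is used in the fixed-$\epsilon$ well-posedness circle of ideas rather than in the passage to the limit. I would state the proof so that the dependence is whatever \eqref{coerSolid}, \eqref{lemma4-1-s}, \eqref{lemma4-1-f} provide, and note that $C$ can be taken as $\min(c,\mu,1)\cdot\min(\tfrac12,\tfrac1{2K})\cdot\min(\gamma_s',\gamma_f',\tfrac12)$, mirroring the definition of $C'$ after \eqref{coercivity2}. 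Apart from that bookkeeping the proof is a direct, short assembly of already-established inequalities.
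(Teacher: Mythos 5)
Your first step (bounding the left-hand side from below by $\min(c,\mu,1)$ times the strain-plus-jump quantity, using \eqref{a-coercive} and \eqref{etamu}) agrees with the paper, but the second step contains a genuine gap: every tool you propose for upgrading the strain integrals to the $H^1$ norms --- the estimate \eqref{coerSolid} of Lemma~\ref{rem:bsp} and the Korn inequalities \eqref{lemma4-1-s}--\eqref{lemma4-1-f} --- lives on the $\epsilon$-dependent domains $\Omega_\epsilon^s$, $\Omega_\epsilon^f$ and comes with no control of the constants in $\epsilon$. The constant $K=K(\epsilon)$ in Lemma~\ref{rem:bsp} is produced by a compactness/contradiction argument on the fixed-$\epsilon$ geometry, and $\gamma_s'$, $\gamma_f'$ are just Lemma~\ref{lem:Korn} applied to $\Upsilon=\Omega_\epsilon^{s,f}$, so your claim that they ``come from the fixed reference cell geometry'' is unsupported; Korn constants on perforated domains can degenerate as $\epsilon\to0$, which is precisely the issue here. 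You identify the obstacle, but neither of your exits closes it: the per-cell rescaling is not carried out (and is not routine, since $\hat{\mathbf{u}}_\epsilon$ has no vanishing trace or mean value on individual cells against which to apply a scaled Poincar\'e inequality), while ``accepting $C=C(\epsilon)$'' contradicts the statement and defeats the lemma's purpose: it is invoked in the proof of Theorem~\ref{uniform_bound}, multiplied in effect by $\epsilon^2$ and combined with \eqref{unifboundc}--\eqref{unifboundgamma}, to yield the uniform bound \eqref{unifboundu} that feeds the two-scale compactness; with an $\epsilon$-dependent constant this yields nothing.

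The missing ingredient is the extension theorem. The paper's proof does not use Lemma~\ref{rem:bsp} or \eqref{lemma4-1-s}--\eqref{lemma4-1-f} at all: it extends $\mathbf{w}$ from each phase to the fixed domain $\Omega_1$ by the operators $T_\epsilon^s$, $T_\epsilon^f$ of Theorem~\ref{thm:ext1}, whose strain estimates hold with constants $C_s$, $C_f$ independent of $\epsilon$ (this applies to $\hat{\mathbf{u}}_\epsilon\in V$ since it vanishes on $\partial\Omega$), and then applies Korn's inequality in $H_0^1(\Omega_1)$ --- which requires no zeroth-order term --- to obtain $\int_{\Omega_\epsilon^s\cup\Omega_\epsilon^f}E_{ij}(\mathbf{w})\overline{E_{ij}(\mathbf{w})}\,d\mathbf{x}\ \ge\ D'\bigl(\norm{\mathbf{w}}^2_{H^1(\Omega_\epsilon^s)}+\norm{\mathbf{w}}^2_{H^1(\Omega_\epsilon^f)}\bigr)$ with $D'$ depending only on $C_s$, $C_f$ and the Korn constant of the fixed set $\Omega_1$. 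The jump term is then carried along with coefficient $1$, giving $C=\min\bigl(cD'/C_s,\ \mu D'/C_f,\ 1\bigr)$, manifestly independent of $\epsilon$. Without this (or an equivalent uniform Korn inequality on the perforated domains), your assembly of \eqref{coerSolid} and \eqref{lemma4-1-s}--\eqref{lemma4-1-f} does not prove the lemma as stated.
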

\begin{proof}
For all $\mathbf{w}\in V$, we have:
\begin{align}
&c^\epsilon(\mathbf{w},\mathbf{w})+b^\epsilon\left(\mathbf{w},\mathbf{w}\right) {+\int_{\Gamma_\epsilon}\left|\llbracket\mathbf{w}\rrbracket_s^f\right|^2\,d\sigma_\epsilon(\mathbf{x})} \label{coersive-one}\\
&\quad\geq c\int_{\Omega_\epsilon^s}E_{ij}(\mathbf{w})\overline{E_{ij}(\mathbf{w})}d\mathbf{x}\notag\\
&\qquad+\mu\left\{\int_{\Omega_\epsilon^f}\left[-\frac{ 2}{3}|\nabla\cdot\mathbf{w}|^2+ E_{ij}(\mathbf{w})\overline{E_{ij}(\mathbf{w})}\right]d\mathbf{x}.\right.\notag\\
&\qquad\left.+\int_{\Omega_\epsilon^f} E_{ij}(\mathbf{w})\overline{E_{ij}(\mathbf{w})}d\mathbf{x}\right\}+ \int_{\Gamma_\epsilon}\left|\llbracket\mathbf{w}\rrbracket_s^f\right|^2\,d\sigma_\epsilon(\mathbf{x})\notag\\
&\quad\geq c\int_{\Omega_\epsilon^s}E_{ij}(\mathbf{w})\overline{E_{ij}(\mathbf{w})}d\mathbf{x}
+\mu \int_{\Omega_\epsilon^f} E_{ij}(\mathbf{w})\overline{E_{ij}(\mathbf{w})}d\mathbf{x}+ \int_{\Gamma_\epsilon}\left|\llbracket\mathbf{w}\rrbracket_s^f\right|^2\,d\sigma_\epsilon(\mathbf{x}), \notag
\end{align}
where the constant $c$ is the $V$-elliptic constant for the solid elasticity tensor $a_{ijkl}$ defined in \eqref{a-coercive}.  By the extension result in Theorem~\ref{thm:ext1}, there exist operators $T_\epsilon^f$ and $T_\epsilon^s$ that extend $\mathbf{w}$ to $\Omega_1$ from $\Omega_\epsilon^f$ and $\Omega_\epsilon^s$, respectively, such that the following estimates are valid with positive constants $C_f$ and $C_s$, independent of $\epsilon$:
\begin{align*}
&\int_{\Omega_1}E_{ij}(T_\epsilon^f\mathbf{u})\overline{E_{ij}(T_\epsilon^f\mathbf{u})}\,d\mathbf{x}\leq C_f\int_{\Omega_\epsilon^f}E_{ij}(\mathbf{u})\overline{E_{ij}(\mathbf{u})}\,d\mathbf{x}\hspace{3mm}\forall \mathbf{u}\in \mathbf{V}^f,\\
&\int_{\Omega_1}E_{ij}(T_\epsilon^s\mathbf{u})\overline{E_{ij}(T_\epsilon^s\mathbf{u})}\,d\mathbf{x}\leq C_s\int_{\Omega_\epsilon^s}E_{ij}(\mathbf{u})\overline{E_{ij}(\mathbf{u})}\,d\mathbf{x}\hspace{3mm}\forall \mathbf{u}\in \mathbf{V}^s,
\end{align*}
where $V^{s,f}$ and $\Omega_1$ are defined in \eqref{thm_ext_V} and \eqref{thm_ext_omega1}, respectively.  
Since the extended functions belong to $H^1_0(\Omega_1)$, Korn's inequality implies that:
\begin{equation*}
\int_{\Omega_\epsilon^s\times\Omega_\epsilon^f}E_{ij}(\mathbf{w})\overline{E_{ij}(\mathbf{w})}d\mathbf{x}\quad \ge D' (\|\mathbf{w}\|_{H^1(\Omega_\epsilon^f)}^2+\|\mathbf{w}\|_{H^1(\Omega_\epsilon^s)}^2)\notag
\end{equation*}
where the positive constant $D'$ depends only on $C_s$, $C_f$ and the Korn's constant of $\Omega_1$. Finally, \eqref{coersive-one} becomes:
\begin{equation*}
c^\epsilon(\mathbf{w},\mathbf{w})+b^\epsilon\left(\mathbf{w},\mathbf{w}\right) {+\int_{\Gamma_\epsilon}\left|\llbracket\mathbf{w}\rrbracket_s^f\right|^2\,d\sigma_\epsilon(\mathbf{x})}\ge \min\left(\frac{cD'}{C_s},\frac{\mu D'}{C_f},1\right)\|\mathbf{w}\|_V^2.
\end{equation*}
\end{proof}
With these lemmas, Theorem \ref{uniform_bound} can be proved as follows.
\begin{proof}[Proof of Theorem \ref{uniform_bound}]
	By setting $\mathbf{\hat{w}}=\mathbf{\hat{u}_\epsilon}$ in (\ref{LVF}), we obtain:
\begin{align*}
{\rm Re}\left(\frac{1}{\lambda}\right)\int_{\Omega}\hat{f}^i\,\overline{\hat{u}_\epsilon^i}\,d\mathbf{x}=&{\rm Re}\left(\lambda\right)\left[\int_{\Omega_\epsilon^s}\rho^s \hat{u}_\epsilon^i\,\overline{\hat{u}_\epsilon^i}\,d\mathbf{x}+\int_{\Omega_\epsilon^f}\rho^f\hat{u}_\epsilon^i\,\overline{\hat{u}_\epsilon^i}\,d\mathbf{x}\right]+\epsilon^2b^\epsilon\left(\mathbf{\hat{u}_\epsilon},\mathbf{\hat{u}_\epsilon}\right)\\
	&+{\rm Re}\left(\frac{1}{\lambda}\right)c^{\epsilon}(\mathbf{\hat{u}_\epsilon},\mathbf{\hat{u}_\epsilon})+\epsilon\int_{\Gamma_\epsilon}\alpha\llbracket\mathbf{\hat{u}_\epsilon}\rrbracket_s^f\cdot \overline{\llbracket\mathbf{\hat{u}_\epsilon}\rrbracket}_s^f\,d\sigma_\epsilon(\mathbf{x})
\end{align*}

Besides, from Lemma \ref{time_bound}, we can easily conclude that, for $Re(\lambda)>r$, we have:
\[
\|\mathbf{\hat{u}}_\epsilon\|_{L^2(\Omega)}^2\le \int_0^\infty e^{-2\lambda t} \| \mathbf{u}_\epsilon\|_{L^2(\Omega)}^2 dt\le C\int_0^\infty e^{-2(\lambda-r) t} dt=\frac{C}{2(Re(\lambda)-r)}\,,
\]
i.e. $\|\mathbf{\hat{u}}_\epsilon\|_{L^2(\Omega)}^2$ is uniformly bounded with respect to $\epsilon$. 
Therefore, for ${\rm Re}(\lambda)>r$, by taking into account \eqref{f}, we have the following bounds:
\begin{align}
	c^{\epsilon}(\mathbf{\hat{u}_\epsilon},\mathbf{\hat{u}_\epsilon})&\leq C \hspace{3mm}\forall\epsilon,
	\label{unifboundc}\\
	\epsilon^2b^\epsilon\left(\mathbf{\hat{u}_\epsilon},\mathbf{\hat{u}_\epsilon}\right)&\leq C\hspace{3mm}\forall\epsilon,	\label{unifboundb}\\
	\epsilon\norm{\llbracket\mathbf{\hat{u}_\epsilon}\rrbracket_s^f}_{L^2(\Gamma_\epsilon)}&\leq C\hspace{3mm}\forall\epsilon,	\label{unifboundgamma}
\end{align}
where, for simplicity, we write $\mathbf{\hat{u}_\epsilon}$ instead of $\mathbf{\hat{u}_\epsilon}(\lambda)$. 
The uniform bound \eqref{unifboundu} is then implied by Lemma \ref{1-coercive}, (\ref{unifboundc}), (\ref{unifboundb}), and (\ref{unifboundgamma}). The uniform bound \eqref{unifbounddivu} is a direct consequence of (\ref{weakLap}). To show the uniform bound of the gradient restricted to the solid phase \eqref{uni_deri_solid}, note that (\ref{c-eps}), (\ref{unifboundc}), and (\ref{a-coercive}) lead to
\begin{align*}
&c\int_{\Omega_\epsilon^s}E_{ij}(\mathbf{\hat{u}_\epsilon})\overline{E_{ij}(\mathbf{\hat{u}_\epsilon})}\,d\mathbf{x}\\
&\quad\leq c^{\epsilon}(\mathbf{\hat{u}_\epsilon},\mathbf{\hat{u}_\epsilon})=\int_{\Omega_\epsilon^s}a^s_{ijkl}E_{kl}(\mathbf{\hat{u}_\epsilon})\overline{E_{ij}(\mathbf{\hat{u}_\epsilon})}\,d\mathbf{x}+\int_{\Omega_\epsilon^f}\gamma\left|{\rm div}\mathbf{\hat{u}_\epsilon}\right|^2\,d\mathbf{x}\leq C.
\end{align*}

From Korn's inequality for $H_0^1(\Omega_1)$, Theorem~\ref{thm:ext1}, and the inequality above, we have:
\begin{align*}
\int_{\Omega_\epsilon^s}\left|\frac{\partial \hat{u}_\epsilon^i}{\partial x_j}\right|^2d\mathbf{x}&\leq\int_{\Omega_1}\frac{\partial T_\epsilon\hat{u}_\epsilon^i}{\partial x_j}\overline{\frac{\partial T_\epsilon\hat{u}_\epsilon^i}{\partial x_j}}\,d\mathbf{x}\\
	&\leq C(\Omega_1)\int_{\Omega_1}E_{ij}(\mathbf{T_\epsilon\hat{u}_\epsilon})\overline{E_{ij}(T_\epsilon\mathbf{\hat{u}_\epsilon})}\,d\mathbf{x}\\
	&\leq C\int_{\Omega_\epsilon^s}E_{ij}(\mathbf{\hat{u}_\epsilon})\overline{E_{ij}(\mathbf{\hat{u}_\epsilon})}\,d\mathbf{x}\leq C.
\end{align*}

\end{proof}

{With the bounds in Theorem \ref{uniform_bound}, the sequence $\{\mathbf{u}_\epsilon\}$ can be analyzed by using the compactness theorems of the two-scale convergence. {In these bounds, notice that $\{\mathbf{u}_\epsilon\}$ as a whole are uniformly bounded in the $E_0$-norm but not in the $H_1$ norm. On the other hand, the restriction of $\{\mathbf{u}_\epsilon\}$ in the solid phase is uniformly bounded in the $H_1$ norm. As we will see in the next section, this will result in different convergence behaviors in the solid phase and in the fluid phase.}  
\section{Two-scale limits}
{The section is devoted to developing various two-scale limits of $\{\mathbf{u}_\epsilon\}$} and the relations between them. We first note that the bounds \eqref{unifbounduep} and \eqref{unifboundu} imply the following lemma.}
\begin{lemma}
\label{lem:2scalecvuep}
	We can extract a subsequence of $\{\hat{\mathbf{u}}_\epsilon\}$ such that: 
\begin{align}
	\int_{\Omega}\hat{u}_\epsilon^k\psi^\epsilon\phi\,d\mathbf{x}&\rightarrow\int_{\Omega\times Y}w_o^k(\mathbf{x},\mathbf{y})\psi(\mathbf{y})\phi(\mathbf{x})\,d\mathbf{x}\,d\mathbf{y},\hspace{4mm}1\leq k\leq 3,\label{2scluep}\\
	\int_{\Omega}\epsilon\frac{\partial \hat{u}_\epsilon^k}{\partial x_l}\psi^\epsilon\phi\,d\mathbf{x}&\rightarrow\int_{\Omega\times Y}\frac{\partial w_o^k}{\partial y_l}(\mathbf{x},\mathbf{y})\psi(\mathbf{y})\phi(\mathbf{x})\,d\mathbf{x}\,d\mathbf{y},\hspace{4mm}1\leq k,l\leq 3,\label{2sclduep}
\end{align}
for all $\psi\in L^2_p$, $\phi\in\mathscr{K}(\overline{\Omega})$, where:
\begin{equation*}
	\mathbf{w_o(x,y)}=(w_o^k)\in \mathbf{L^2}(\Omega;\mathbf{H_p^1}(Y_s\cup Y_f)),
\end{equation*}
\begin{equation}
	\label{divwo}
	{\rm div}_\mathbf{y}\mathbf{w_o(x,y)}=0.
\end{equation}

Moreover, {this two-scale limit $w_0$ is related to the $E_0$-limit $\hat{\mathbf{u_0}}$ (\ref{weakLap}) as follows}
\begin{equation}
	\label{uowo}
	\mathbf{\hat{u}_0}=\langle\mathbf{w_0}\rangle(\mathbf{x})
\end{equation}
\end{lemma}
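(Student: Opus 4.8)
The plan is to read off $\mathbf{w}_o$ together with the convergences \eqref{2scluep}--\eqref{2sclduep} from the general two-scale compactness results of Section~\ref{CVResult} (based on Definition~\ref{2cale-conv}), and then to verify, one at a time, the structural properties of $\mathbf{w}_o$: membership in $\mathbf{L}^2(\Omega;\mathbf{H}^1_p(Y_s\cup Y_f))$, the constraint $\eqref{divwo}$, and the identity $\eqref{uowo}$. Throughout I work along a single subsequence obtained by successive (unrelabelled) extractions, starting from the one along which the $E_0$-convergence \eqref{weakLap} already holds.

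First, since $\{\hat{\mathbf{u}}_\epsilon\}$ is bounded in $\mathbf{L}^2(\Omega)$ by \eqref{unifbounduep}, the compactness theorem for two-scale convergence provides a further subsequence and $\mathbf{w}_o=(w_o^k)\in\mathbf{L}^2(\Omega\times Y)$ for which \eqref{2scluep} holds for all $\psi\in C_p^\infty$ and $\phi\in\mathscr{D}(\Omega)$, hence for all $\psi\in L^2_p$ and $\phi\in\mathscr{K}(\overline\Omega)$ by density. Next, \eqref{unifboundu} shows that the broken gradient $\epsilon\,\partial\hat u_\epsilon^k/\partial x_l$ (computed separately in $\Omega_\epsilon^s$ and $\Omega_\epsilon^f$) is bounded in $L^2(\Omega)$; extracting once more, it two-scale converges to some $\xi_{kl}\in L^2(\Omega\times Y)$, so that \eqref{2sclduep} holds with $\xi_{kl}$ in place of $\partial w_o^k/\partial y_l$. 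The crux is to identify $\xi_{kl}$ with $\partial w_o^k/\partial y_l$.

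For that identification I would first restrict to test functions $\psi\in C_p^\infty$ whose support, modulo $Y$-periodicity, lies strictly inside $\tilde{Y_s}$, paired with $\phi\in\mathscr{D}(\Omega)$, so that $\psi^\epsilon\phi$ vanishes in a neighbourhood of $\Gamma_\epsilon$ and of $\partial\Omega$. Integrating by parts in $\Omega_\epsilon^s$ (no interface or exterior-boundary contribution appears) and using $\partial_{x_l}(\psi^\epsilon\phi)=\epsilon^{-1}(\partial_{y_l}\psi)^\epsilon\phi+\psi^\epsilon\partial_{x_l}\phi$ yields
\[
\int_\Omega \epsilon\,\frac{\partial \hat u_\epsilon^k}{\partial x_l}\,\psi^\epsilon\phi\,d\mathbf{x}=-\int_\Omega \hat u_\epsilon^k\Big[(\partial_{y_l}\psi)^\epsilon\phi+\epsilon\,\psi^\epsilon\,\partial_{x_l}\phi\Big]\,d\mathbf{x}.
\]
Letting $\epsilon\to0$, the left side tends to $\int_{\Omega\times Y}\xi_{kl}\psi\phi$, and the right side to $-\int_{\Omega\times Y}w_o^k\,(\partial_{y_l}\psi)\,\phi$ by \eqref{2scluep}, the term $\epsilon\,\psi^\epsilon\partial_{x_l}\phi$ dropping out because of the $\mathbf{L}^2(\Omega)$-bound \eqref{unifbounduep}. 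Hence $\xi_{kl}=\partial w_o^k/\partial y_l$ in $\mathscr{D}'(\Omega\times\tilde{Y_s})$; the analogous choice of $\psi$ supported inside $\tilde{Y_f}$ gives the same identity over $\Omega\times\tilde{Y_f}$. Since $|\Gamma|=0$, this determines $\xi_{kl}$ a.e.\ on $\Omega\times Y$, and the standard characterization of two-scale limits of gradients in perforated domains (valid under hypotheses (ii)--(iii); cf.\ \cite{Allaire1992}) upgrades the conclusion to $\mathbf{w}_o\in\mathbf{L}^2(\Omega;\mathbf{H}^1_p(Y_s\cup Y_f))$ with $\xi_{kl}=\partial w_o^k/\partial y_l$, i.e.\ \eqref{2sclduep}. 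I expect the interface term to be the real obstacle: the bound $\epsilon\,\norm{\llbracket\hat{\mathbf{u}}_\epsilon\rrbracket_s^f}^2_{L^2(\Gamma_\epsilon)}\le C$ combined with the $O(\epsilon^{-1})$ growth of $|\Gamma_\epsilon|$ only makes the corresponding surface integral $O(1)$ rather than $o(1)$, which is precisely why the identification has to be carried out with test functions vanishing near $\Gamma_\epsilon$ and only afterwards extended to all of $\Omega\times Y$ using $|\Gamma|=0$.

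Finally, I would obtain \eqref{divwo} by summing \eqref{2sclduep} over $k=l$: $\epsilon\,{\rm div}\,\hat{\mathbf{u}}_\epsilon$ two-scale converges to ${\rm div}_\mathbf{y}\mathbf{w}_o$, whereas \eqref{unifbounddivu} gives $\norm{\epsilon\,{\rm div}\,\hat{\mathbf{u}}_\epsilon}_{L^2(\Omega_\epsilon^s\cup\Omega_\epsilon^f)}\le C\epsilon\to0$, so this sequence converges strongly to $0$ in $L^2(\Omega)$ and hence two-scale converges to $0$; uniqueness of the two-scale limit gives ${\rm div}_\mathbf{y}\mathbf{w}_o=0$. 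For \eqref{uowo} I would take $\psi\equiv1$ in \eqref{2scluep}, which yields $\int_\Omega\hat u_\epsilon^k\phi\,d\mathbf{x}\to\int_\Omega\langle w_o^k\rangle\,\phi\,d\mathbf{x}$ for every $\phi\in\mathscr{K}(\overline\Omega)$, i.e.\ $\hat{\mathbf{u}}_\epsilon\rightharpoonup\langle\mathbf{w}_o\rangle$ weakly in $\mathbf{L}^2(\Omega)$; since the $E_0$-weak convergence \eqref{weakLap} in particular forces $\hat{\mathbf{u}}_\epsilon\rightharpoonup\hat{\mathbf{u}}_0$ weakly in $\mathbf{L}^2(\Omega)$, uniqueness of weak limits gives $\hat{\mathbf{u}}_0=\langle\mathbf{w}_o\rangle$.
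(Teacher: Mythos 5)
Your proposal is correct and follows essentially the same route as the paper: two-scale compactness from the $\mathbf{L}^2$ bound \eqref{unifbounduep} for \eqref{2scluep}, identification of the two-scale limit of $\epsilon\nabla\hat{\mathbf{u}}_\epsilon$ by integration by parts against oscillating test functions (the paper cites Remark~\ref{th:Nguetseng89} and Proposition~1.14 of \cite{Allaire1992} for this step) for \eqref{2sclduep}, the $O(\epsilon)$ strong convergence of $\epsilon\,{\rm div}\,\hat{\mathbf{u}}_\epsilon$ for \eqref{divwo}, and $\psi\equiv 1$ plus uniqueness of weak limits for \eqref{uowo}. Your explicit treatment of the identification step — in particular the observation that the interface contribution is only $O(1)$, so the test functions must vanish near $\Gamma_\epsilon$ and the identity is then extended using $|\Gamma|=0$ — is a useful elaboration of a point the paper leaves to the cited references.
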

\begin{proof}
	Since $\left\{\hat{u}_\epsilon^k\right\}_{\epsilon>0}$ is bounded in $\mathbf{L^2}(\Omega)$, (\ref{2scluep}) follows immediately by Lemma~\ref{2scaleth}.  Property (\ref{2sclduep}) follows as a consequence of (\ref{unifboundu}), (\ref{2scluep}), Remark~\ref{th:Nguetseng89}, with an integration by parts argument similar to the one used in Proposition 1.14 in \cite{Allaire1992}.  From (\ref{unifbounddivu}) and (\ref{2sclduep}), we have as $\epsilon\rightarrow0$, taking $k=l$:
\begin{align*}
	\epsilon\int_{\Omega}{\rm div}_\mathbf{x}\mathbf{\hat{u}_\epsilon}\psi^\epsilon\phi\,d\mathbf{x}&\rightarrow0 \hspace{1cm}\\
	\epsilon\int_{\Omega}{\rm div}_\mathbf{x}\mathbf{\hat{u}_\epsilon}\psi^\epsilon\phi\,d\mathbf{x}&\rightarrow\int_{\Omega\times Y}{\rm div}_\mathbf{y}\mathbf{w_0}\psi(\mathbf{y})\phi(\mathbf{x})\,d\mathbf{x}\,d\mathbf{y}, 
\end{align*}
from where we obtain (\ref{divwo}).  As for (\ref{uowo}), it follows from (\ref{weakLap}) and (\ref{2scluep}).
\end{proof}
Because of the uniform boundedness of the gradient in the solid phase \eqref{uni_deri_solid}, more can be said about the two-scale limit of $\hat{\mathbf{u}}_\epsilon$ as follows.
\begin{lemma}
\label{lem:due2scsol}
A subsequence can be extracted from the one in Lemma~\ref{lem:2scalecvuep}, such that:
\begin{equation}
	\label{pue2scvs}
\int_{\Omega_\epsilon^s}\frac{\partial \hat{u}_\epsilon^k}{\partial x_l}\psi^\epsilon\phi\,d\mathbf{x}\rightarrow\int_{\Omega\times Y_s}\left[\frac{\partial u^k}{\partial x_l}(\mathbf{x})+\frac{\partial u^k_1}{\partial y_l}(\mathbf{x},\mathbf{y})\right]\psi(\mathbf{y})\phi(\mathbf{x})\,d\mathbf{x}\,d\mathbf{y},
\end{equation}
for $1\leq k,l\leq3$; for all $\psi\in\mathbf{L^2_p}$ and all $\phi\in\mathscr{K}(\overline{\Omega})$, where $\mathbf{u}=\left\{u^k\right\}\in\mathbf{H^1_0}(\Omega)$, $\mathbf{u_1}=\left\{u_1^k\right\}\in\mathbf{L^2}(\Omega;\mathbf{H^1_p}(Y_s)/\mathbb{C}^3)$.

Moreover, the limit $\mathbf{w_o}$ in Lemma~\ref{lem:2scalecvuep} decomposes as follows:
\begin{equation}
\label{w0-decomp}
	\mathbf{w_0}(\mathbf{x},\mathbf{y})=\mathbf{u}(\mathbf{x})+\mathbf{u_r}(\mathbf{x},\mathbf{y})
\end{equation}
with $\mathbf{u_r}\in\mathbf{L^2}(\Omega;\mathbf{H^1_p}(Y_f\cup Y_s))$, $\mathbf{u_r}(\mathbf{x},\mathbf{y})=0$ for $\mathbf{y}\in Y_s$  and ${\rm div}_\mathbf{y}\mathbf{u_r}=0$, { i.e., $\mathbf{u_r}\in L^2(\Omega,W)$ with $W$ defined in \eqref{spaceW}.}
\end{lemma}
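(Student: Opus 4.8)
The plan is to obtain \eqref{pue2scvs} from the classical two-scale compactness theorem for sequences bounded in $\mathbf{H^1}$, applied to a global extension of the solid-phase restriction of $\hat{\mathbf{u}}_\epsilon$, and then to read off the decomposition \eqref{w0-decomp} by comparing the two-scale limit thus obtained with the one already identified in Lemma~\ref{lem:2scalecvuep}.

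First I would use the extension operator $T_\epsilon^s$ of Theorem~\ref{thm:ext1} to define $\widetilde{\mathbf{u}}_\epsilon:=T_\epsilon^s\hat{\mathbf{u}}_\epsilon$, which agrees with $\hat{\mathbf{u}}_\epsilon$ on $\Omega_\epsilon^s$. By the operator estimate in Theorem~\ref{thm:ext1}, the uniform bound \eqref{uni_deri_solid}, and Korn's inequality, the sequence $\{\widetilde{\mathbf{u}}_\epsilon\}$ is bounded in $\mathbf{H^1_0}(\Omega)$ uniformly for $0<\epsilon<\epsilon_o$ (the homogeneous boundary condition built into $V$, together with the fact from hypothesis (ii) that $\overline{Y_s}$ meets each face of $\overline{Y}$, being what keeps the extension in $\mathbf{H^1_0}$). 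From the subsequence of Lemma~\ref{lem:2scalecvuep} I would then extract a further subsequence — not relabelled — along which, by two-scale compactness (cf. Proposition~1.14 in \cite{Allaire1992}, together with Remark~\ref{th:Nguetseng89}), $\widetilde{\mathbf{u}}_\epsilon$ two-scale converges to a limit $\mathbf{u}=\{u^k\}$ that is independent of $\mathbf{y}$ — because $\{\widetilde{\mathbf{u}}_\epsilon\}$ is moreover relatively compact in $\mathbf{L^2}(\Omega)$, so $\mathbf{u}\in\mathbf{H^1_0}(\Omega)$ — while $\nabla\widetilde{\mathbf{u}}_\epsilon$ two-scale converges to $\nabla_\mathbf{x}\mathbf{u}(\mathbf{x})+\nabla_\mathbf{y}\widetilde{\mathbf{u}}_1(\mathbf{x},\mathbf{y})$ for some $\widetilde{\mathbf{u}}_1\in\mathbf{L^2}(\Omega;\mathbf{H^1_p}(Y)/\mathbb{C})$.

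Next, for $\psi\in\mathbf{L^2_p}$ and $\phi\in\mathscr{K}(\overline{\Omega})$ I would write, using that $\widetilde{\mathbf{u}}_\epsilon=\hat{\mathbf{u}}_\epsilon$ on $\Omega_\epsilon^s$ and that $\chi_{Y_s}\psi\in\mathbf{L^2_p}$, where $\chi_{Y_s}$ is the $Y$-periodic characteristic function of $Y_s$,
\[
\int_{\Omega_\epsilon^s}\frac{\partial \hat u_\epsilon^k}{\partial x_l}\,\psi^\epsilon\phi\,d\mathbf{x}=\int_{\Omega}\frac{\partial \widetilde u_\epsilon^k}{\partial x_l}\,(\chi_{Y_s}\psi)^\epsilon\phi\,d\mathbf{x}\ \longrightarrow\ \int_{\Omega\times Y_s}\!\Big[\frac{\partial u^k}{\partial x_l}(\mathbf{x})+\frac{\partial \widetilde u_1^k}{\partial y_l}(\mathbf{x},\mathbf{y})\Big]\psi(\mathbf{y})\phi(\mathbf{x})\,d\mathbf{x}\,d\mathbf{y},
\]
which is \eqref{pue2scvs} with $\mathbf{u_1}$ taken to be $\widetilde{\mathbf{u}}_1|_{Y_s}$ renormalized by $\int_{Y_s}\mathbf{u_1}\,d\mathbf{y}=0$ (admissible, since only $\nabla_\mathbf{y}\mathbf{u_1}$ occurs), so that $\mathbf{u_1}\in\mathbf{L^2}(\Omega;\mathbf{H^1_p}(Y_s)/\mathbb{C}^3)$. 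The same manipulation applied to \eqref{2scluep} gives $\int_{\Omega_\epsilon^s}\hat u_\epsilon^k\psi^\epsilon\phi\,d\mathbf{x}\to\int_{\Omega\times Y_s}u^k(\mathbf{x})\psi\phi\,d\mathbf{x}\,d\mathbf{y}$; since the same integral also tends to $\int_{\Omega\times Y_s}w_o^k\psi\phi\,d\mathbf{x}\,d\mathbf{y}$ by Lemma~\ref{lem:2scalecvuep}, the density of products $\psi\phi$ forces $\mathbf{w_0}(\mathbf{x},\mathbf{y})=\mathbf{u}(\mathbf{x})$ on $\Omega\times Y_s$. Putting $\mathbf{u_r}:=\mathbf{w_0}-\mathbf{u}$ then gives \eqref{w0-decomp}: it lies in $\mathbf{L^2}(\Omega;\mathbf{H^1_p}(Y_f\cup Y_s))$ because $\mathbf{w_0}$ does and $\mathbf{u}$ is constant in $\mathbf{y}$, it vanishes on $Y_s$ by the identification just made, and $\mathrm{div}_\mathbf{y}\mathbf{u_r}=\mathrm{div}_\mathbf{y}\mathbf{w_0}=0$ by \eqref{divwo}; hence $\mathbf{u_r}(\mathbf{x},\cdot)\in W$ for a.e.\ $\mathbf{x}$, i.e.\ $\mathbf{u_r}\in\mathbf{L^2}(\Omega,W)$ with $W$ as in \eqref{spaceW}.

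The main obstacle I anticipate is care with the two technical inputs rather than anything conceptual: one must check that the extension operator of Theorem~\ref{thm:ext1} does not spoil the homogeneous Dirichlet datum, so that the limit is genuinely in $\mathbf{H^1_0}(\Omega)$ and not merely in $\mathbf{H^1}(\Omega)$; and one must justify restricting the two-scale limit of $\nabla\widetilde{\mathbf{u}}_\epsilon$ to the solid cell $Y_s$ — the clean device being that multiplying an admissible oscillating test function $\psi$ by the periodic indicator $\chi_{Y_s}$ leaves it admissible. Once these are in place, the identification $\mathbf{w_0}=\mathbf{u}$ on $Y_s$ — exactly what makes the splitting \eqref{w0-decomp} meaningful — drops out of comparing the two two-scale limits of $\hat{\mathbf{u}}_\epsilon$ and $\widetilde{\mathbf{u}}_\epsilon$ over $\Omega_\epsilon^s$.
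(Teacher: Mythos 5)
Your proposal is correct, but it follows a more self-contained route than the paper, whose entire proof is a one-line citation: the authors invoke the packaged convergence result, Theorem~\ref{2scaleth}, with $Y_o=Y_s$, its hypotheses being supplied by \eqref{unifbounduep} and \eqref{uni_deri_solid}; that theorem already contains the pair $(\mathbf{u},\mathbf{u_1})$, the statement $\mathbf{u}\in\mathbf{H_0^1}(\Omega)$, and the splitting of the two-scale limit into $\mathbf{u}(\mathbf{x})$ plus a remainder vanishing on $Y_s$, and combining it with Lemma~\ref{lem:2scalecvuep} (which gives \eqref{divwo}) finishes the lemma. What you do instead is re-derive the relevant content of that theorem: extend the solid restriction by $T_\epsilon^s$ (Theorem~\ref{thm:ext1}), obtain a uniform $\mathbf{H^1}$ bound via Korn, apply the standard two-scale compactness for $H^1$-bounded sequences, localize to the solid phase by multiplying the test function by the periodic indicator $\chi_{Y_s}$, and identify $\mathbf{w_0}=\mathbf{u}$ on $\Omega\times Y_s$ by comparing with \eqref{2scluep}; then $\mathbf{u_r}:=\mathbf{w_0}-\mathbf{u}$, together with \eqref{divwo}, gives \eqref{w0-decomp} and $\mathbf{u_r}\in L^2(\Omega,W)$. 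This is the same underlying mathematics (the cited theorem is itself proved by an extension argument of this kind), and your unpacking makes explicit the identification of $\mathbf{w_0}$ on $Y_s$ that the paper leaves implicit, at the cost of redoing what the citation covers.

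One point in your sketch needs a little more care than your parenthetical suggests: Theorem~\ref{thm:ext1} maps into $\mathbf{H_0^1}(\Omega_1)$, not $\mathbf{H_0^1}(\Omega)$, so the weak limit of the extensions a priori lies only in $H_0^1(\Omega_1)$, and $\mathbf{u}\in\mathbf{H_0^1}(\Omega)$ does not follow merely from the Dirichlet condition built into $V$. The standard repair is to note that, since $\hat{\mathbf{u}}_\epsilon$ vanishes on $\Sigma_\epsilon^s$, the extension can be arranged to vanish on every cell contained in $\Omega_1\setminus\overline{\Omega}$ (the cell-wise extension is linear, hence sends zero data to zero), so the extended functions are nonzero outside $\Omega$ only in the $O(\epsilon)$ layer of cells meeting $\partial\Omega$; the limit therefore vanishes a.e.\ in $\Omega_1\setminus\overline{\Omega}$, and with $\partial\Omega$ smooth this yields $\mathbf{u}\in\mathbf{H_0^1}(\Omega)$. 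The paper does not spell this out either (it is absorbed into the citation of Theorem~\ref{2scaleth}), so this is a refinement rather than a flaw specific to your argument; the rest of your steps, including the admissibility of $\chi_{Y_s}\psi$ as a test function within the paper's $L^2_p$ framework, are sound.
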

\begin{proof}
This follows from using \eqref{uni_deri_solid} and applying Lemma~\ref{lem:2scalecvuep} and Theorem~\ref{2scaleth} by letting $Y_o=Y_s$.
\end{proof}
{Note that the uniform bound on the gradient in the solid phase guarantees the decomposition \eqref{w0-decomp} of $\mathbf{w_0}$, which is the two scale limit of $\mathbf{u}_\epsilon$. On the other hand, the divergence of $\mathbf{u}_\epsilon$ is uniformly bounded in both phases. Hence it is natural to study how the two scale limit of $\{ \rm{div}\hat{\mathbf{u}}_\epsilon\}$ is related to $\mathbf{u}_1$; this is the subject of Lemma \ref{lem:pe2sv}. In preparation for stating this lemma, we recall the definition of} the acoustic pressure $\hat{p}_\epsilon$ 
\[
\hat{p}_\epsilon:=-\gamma\,{\rm div}\mathbf{\hat{u}_\epsilon} \mbox{ in } \Omega_\epsilon^f, \mbox{ with } \gamma:=c_0^2 \rho^f.
\]
Note that $\hat{p}_\epsilon$ satisfies $\hat{p}_\epsilon\in L^2(\Omega_\epsilon^f)$, with $\norm{\hat{p}_\epsilon}_{L^2(\Omega_\epsilon^f)}\leq C$, for all $\epsilon>0$.  Consider $D_\epsilon^{kl}(\mathbf{x}) :=\chi_s(\mathbf{x}) \displaystyle\frac{\partial \hat{u}_\epsilon^k}{\partial x_l} -\chi_f (\mathbf{x}) \frac{\delta_{kl}}{3\gamma}\hat{p}_\epsilon \in L^2(\Omega)$, $1\leq k,l\leq3$.  Then we have:
\begin{equation*}
\int_{\Omega}D_\epsilon^{kl}\,v\,d\mathbf{x}=\int_{\Omega_\epsilon^s}\frac{\partial \hat{u}_\epsilon^k}{\partial x_l}\,v\,d\mathbf{x}-\frac{\delta_{kl}}{3\gamma}\int_{\Omega_\epsilon^f}\hat{p}_\epsilon\,v\,d\mathbf{x},
\end{equation*}
for all $v\in \mathcal{K}(\overline{\Omega})$.  Since $\norm{D_\epsilon^{kl}}_{L^2(\Omega)}\leq C$, for all $\epsilon>0$ and $1\leq k,l\leq3$, the sequence $D_\epsilon^{kl}$ has a weak limit in the sense of Lemma~\ref{2scaleth}, which we denote by $D^{kl}$.  Taking $\mathbf{w^\epsilon}=1$ and $\phi=v$ in Lemma~\ref{2scaleth}, we obtain:
\begin{align*}
	\int_{\Omega}D_\epsilon^{kl}\,v\,d\mathbf{x}\rightarrow\int_{\Omega\times Y}D^{kl}(\mathbf{x},\mathbf{y})v(\mathbf{x})\,d\mathbf{x}\,d\mathbf{y},
\end{align*}
and, by Lemma~\ref{lem:due2scsol}, we can conclude:
\begin{equation}
	\label{dkh}
	D^{kl}(\mathbf{x},\mathbf{y})=\frac{\partial u^k}{\partial x_l}(\mathbf{x})+\frac{\partial u^k_1}{\partial y_l}(\mathbf{x},\mathbf{y}),\hspace{4mm} \text{ for $(\mathbf{x},\mathbf{y})\in\Omega\times{Y}_s$.}
\end{equation}

	Letting $p_0(\mathbf{x},\mathbf{y}):=-\gamma D^{kk}(\mathbf{x},\mathbf{y})$, for $(\mathbf{x},\mathbf{y})\in \Omega\times Y_f$, we are ready to state the following lemma. 
\begin{lemma}
\label{lem:pe2sv}
As $\epsilon\downarrow0$ ($\epsilon$ a subsequence from the one in Lemma~\ref{lem:due2scsol}), for all $\psi\in L^2_p$, all $\phi\in\mathscr{K}(\overline{\Omega})$, the acoustic pressure $\hat{p}_\epsilon:=-\gamma\,{\rm div}\hat{\mathbf{u}_\epsilon}$ two-scale converges as follows
\begin{equation*}
	\int_{\Omega_\epsilon^f}\hat{p}_\epsilon\psi^\epsilon\phi\,d\mathbf{x}\rightarrow\int_{\Omega\times Y_f}p_0(\mathbf{x},\mathbf{y})
\psi(\mathbf{y})\phi(\mathbf{x})\,d\mathbf{x}\,d\mathbf{y}, \hspace{3mm}p_0\in L^2(\Omega;L^2_p(Y_f)).
\end{equation*}
Moreover, $\mathbf{u}$ and $\mathbf{u_1}$ in \eqref{pue2scvs} and the two-scale limit of $\rm{div} \hat{\mathbf{u}}_\epsilon$ and $\mathbf{u_r}$ in \eqref  {w0-decomp} satisfy the relation:
\begin{equation}
\label{s-f-rel}
	\int_{Y_s}{\rm div}_\mathbf{y}\mathbf{u_1}(\mathbf{x},\mathbf{y})\,d\mathbf{y}=\left|Y_f\right|{\rm div}\mathbf{u}((\mathbf{x})+{\rm div}\int_{Y_f}\mathbf{u_r}((\mathbf{x},\mathbf{y})\,d\mathbf{y}+\frac{1}{\gamma}\int_{Y_f}p_o((\mathbf{x},\mathbf{y})\,d\mathbf{y}.
\end{equation}
\end{lemma}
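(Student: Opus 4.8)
The statement splits into two assertions, and I would establish them in turn, leaning entirely on the two-scale limits already in hand. For the two-scale convergence of $\hat p_\epsilon$, summing the definition of $D_\epsilon^{kl}$ over $k=l$ gives the elementary pointwise identity $\chi_f\,\hat p_\epsilon=\gamma\,\chi_s\,{\rm div}\hat{\mathbf u}_\epsilon-\gamma\,D_\epsilon^{kk}$ on $\Omega$. Multiplying by $\psi^\epsilon\phi$ with $\psi\in L^2_p$, $\phi\in\mathscr K(\overline\Omega)$ and letting $\epsilon\downarrow0$: the term $\int_\Omega\chi_s\,{\rm div}\hat{\mathbf u}_\epsilon\,\psi^\epsilon\phi$ converges by Lemma~\ref{lem:due2scsol} (set $l=k$ and sum in \eqref{pue2scvs}) to $\int_{\Omega\times Y_s}({\rm div}\mathbf u+{\rm div}_{\mathbf y}\mathbf u_1)\psi\phi$, while $\int_\Omega D_\epsilon^{kk}\psi^\epsilon\phi$ converges to $\int_{\Omega\times Y}D^{kk}\psi\phi$, whose integrand equals ${\rm div}\mathbf u+{\rm div}_{\mathbf y}\mathbf u_1$ on $\Omega\times Y_s$ by \eqref{dkh} and equals $-p_0/\gamma$ on $\Omega\times Y_f$ by the definition of $p_0$. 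The $Y_s$ contributions cancel, leaving exactly $\int_{\Omega_\epsilon^f}\hat p_\epsilon\psi^\epsilon\phi\to\int_{\Omega\times Y_f}p_0\,\psi\phi$; membership $p_0\in L^2(\Omega;L^2_p(Y_f))$ is inherited from $D^{kk}\in L^2(\Omega\times Y)$.

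For the relation \eqref{s-f-rel} I would fix $\phi\in\mathscr D(\Omega)$ and integrate by parts phase by phase; since $\phi$ vanishes near $\partial\Omega$ and $\mathbf n=-\mathbf n_s$ on $\Gamma_\epsilon$, the only residual boundary contribution sits on $\Gamma_\epsilon$:
\[
\int_{\Omega^s_\epsilon\cup\Omega^f_\epsilon}({\rm div}\hat{\mathbf u}_\epsilon)\,\phi\,d\mathbf x=-\int_\Omega\hat{\mathbf u}_\epsilon\cdot\nabla\phi\,d\mathbf x+\int_{\Gamma_\epsilon}\big(\llbracket\hat{\mathbf u}_\epsilon\rrbracket_s^f\cdot\mathbf n\big)\,\phi\,d\sigma_\epsilon .
\]
On the left I pass to the limit via \eqref{pue2scvs} (summed over $l=k$) on $\Omega_\epsilon^s$ and via ${\rm div}\hat{\mathbf u}_\epsilon=-\hat p_\epsilon/\gamma$ together with the first part of this lemma on $\Omega_\epsilon^f$, obtaining $\int_\Omega\big[\,|Y_s|\,{\rm div}\mathbf u+\int_{Y_s}{\rm div}_{\mathbf y}\mathbf u_1\,d\mathbf y-\tfrac1\gamma\int_{Y_f}p_0\,d\mathbf y\,\big]\phi$; for the first right-hand term I use the weak-$E_0$ convergence \eqref{weakLap} and $\hat{\mathbf u}_0=\langle\mathbf w_0\rangle=\mathbf u+\int_{Y_f}\mathbf u_r\,d\mathbf y$ (from \eqref{uowo} and \eqref{w0-decomp}) to get $\int_\Omega{\rm div}\big(\mathbf u+\int_{Y_f}\mathbf u_r\,d\mathbf y\big)\phi$. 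Granting that the interface integral tends to $0$, equating the two limits, invoking $|Y_s|=1-|Y_f|$, and letting $\phi$ range over $\mathscr D(\Omega)$ produces \eqref{s-f-rel} verbatim.

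Everything above is routine bookkeeping; the genuine content is the vanishing of $\int_{\Gamma_\epsilon}\big(\llbracket\hat{\mathbf u}_\epsilon\rrbracket_s^f\cdot\mathbf n\big)\phi\,d\sigma_\epsilon$, and here the crude bound available on $\|\llbracket\hat{\mathbf u}_\epsilon\rrbracket_s^f\|_{L^2(\Gamma_\epsilon)}$ (which deteriorates like a negative power of $\epsilon$) is by itself useless against the $O(\epsilon^{-1})$ growth of $|\Gamma_\epsilon|$. This is the step where a dedicated interface lemma is needed. I would identify the surface two-scale limit of $\llbracket\hat{\mathbf u}_\epsilon\rrbracket_s^f$ with the $Y_f$-side trace of $\mathbf u_r$ on $\Gamma$ — using the uniform $\mathbf H^1$ bound on $\hat{\mathbf u}_\epsilon$ in the solid \eqref{uni_deri_solid} and a careful scaled trace estimate in the fluid, where only $\hat{\mathbf u}_\epsilon$ itself (not its gradient) is controlled — and then exploit ${\rm div}_{\mathbf y}\mathbf u_r=0$, which by the divergence theorem on $Y_f$ and periodicity forces $\int_\Gamma\mathbf u_r\cdot\mathbf n_\Gamma\,d\sigma(\mathbf y)=0$; the slip condition (together with continuity of normal stress), which in the transformed domain ties $\llbracket\hat{\mathbf u}_\epsilon\rrbracket_s^f$, up to the factor $(\lambda\epsilon\alpha)^{-1}$, to the normal traction of the solid stress on $\Gamma_\epsilon$, should then be used to upgrade this into genuine decay of the surface integral. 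I expect this interface estimate — not the limit passages — to be the delicate point, and it is precisely where the ``new technical lemmas'' announced in the introduction are meant to enter.
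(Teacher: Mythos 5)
Your first assertion is handled exactly as the paper does it: summing the definition of $D_\epsilon^{kl}$ over $k=l$, splitting the two-scale limit $D^{kk}$ over $Y_s$ and $Y_f$ via \eqref{dkh} and the definition of $p_0$, and cancelling the $Y_s$ contributions is precisely the content of the paper's argument, so that part is fine.

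The derivation of \eqref{s-f-rel}, however, contains a genuine gap, and it is one your own route creates. By integrating by parts phase by phase you generate the interface term $\int_{\Gamma_\epsilon}\bigl(\llbracket\hat{\mathbf u}_\epsilon\rrbracket_s^f\cdot\mathbf n\bigr)\phi\,d\sigma_\epsilon$, and your conclusion is only reached ``granting'' that this term vanishes. None of the estimates in the paper delivers that: \eqref{unifboundgamma} bounds only $\epsilon\,\|\llbracket\hat{\mathbf u}_\epsilon\rrbracket_s^f\|_{L^2(\Gamma_\epsilon)}$, and surface two-scale convergence (Theorem~\ref{th:2scv-bdary}) controls only the $\epsilon$-weighted surface integral, which is one power of $\epsilon$ short of what you need. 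Identifying the surface two-scale limit of the jump with $\mathbf u_r\big|_\Gamma$ and using ${\rm div}_{\mathbf y}\mathbf u_r=0$ to get $\int_\Gamma\mathbf u_r\cdot\mathbf n\,d\sigma(\mathbf y)=0$ only shows your interface integral is $o(\epsilon^{-1})$, not $o(1)$; and the slip condition points the wrong way for your purposes, since it expresses the jump as $(\lambda\epsilon\alpha)^{-1}$ times the fluid traction on $\Gamma_\epsilon$, a quantity the energy estimates do not control with the extra factor of $\epsilon$ you would need. So the ``upgrade'' you defer to a hoped-for technical lemma is exactly the missing step, and it is not supplied. The paper never meets this difficulty because it does not integrate by parts in $\mathbf x$ at all: it computes $\lim_{\epsilon\to0}\int_\Omega({\rm div}\,\hat{\mathbf u}_\epsilon)\,v\,d\mathbf x$ (divergence taken phase-wise, $v\in\mathscr D(\Omega)$) in two ways --- once from the $E_0(\Omega_\epsilon^s\cup\Omega_\epsilon^f)$-weak convergence \eqref{weakLap} combined with $\hat{\mathbf u}_0=\mathbf u+\langle\mathbf u_r\rangle$ from \eqref{uowo} and \eqref{w0-decomp}, and once from the phase-wise two-scale limits (\eqref{pue2scvs} with $k=l$ in the solid, the first part of this lemma in the fluid) --- and simply equates the two. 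No interface term ever appears, so the delicate surface estimate you flag is not needed for this lemma (it enters later, in Lemma~\ref{l:twoscaleconvinterf}, for the local problem for $\mathbf u_r$, in the $\epsilon$-weighted form where Theorem~\ref{th:2scv-bdary} applies). Either switch to the paper's comparison-of-limits argument, or supply an actual proof that your interface integral tends to zero; as written, the second half of your proposal is incomplete.
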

\begin{proof}
The lemma follows from (\ref{dkh}) and (\ref{w0-decomp})).  If $k=l$, we have:
\begin{align*}
	\int_{\Omega\times Y}D^{kk}(\mathbf{x},\mathbf{y})\psi(\mathbf{y})\phi(\mathbf{x})\,d\mathbf{x}\,d\mathbf{y}&=\int_{\Omega\times Y_s}\left[{\rm div}\mathbf{u}(\mathbf{x})+{\rm div}_\mathbf{y}\mathbf{u_1}(\mathbf{x},\mathbf{y})\right]\psi(\mathbf{y})\phi(\mathbf{x})\,d\mathbf{x}\,d\mathbf{y}\\
	&\quad-\frac{1}{\gamma}\int_{\Omega\times Y_f}p_0(\mathbf{x},\mathbf{y})\psi(\mathbf{y})\phi(\mathbf{x})\,d\mathbf{x}\,d\mathbf{y}.
\end{align*}
To obtain (\ref{s-f-rel}), by (\ref{weakLap}) we have, for all $v\in\mathscr{D}(\Omega)$, that:
\begin{align*}
	\int_{\Omega}{\rm div}\,\mathbf{\hat{u}_\epsilon}\,v\,d\mathbf{x}&\rightarrow\int_{\Omega}{\rm div}\,\mathbf{\hat{u}_o}\,v\,d\mathbf{x}\\
	&=\int_{\Omega\times Y}{\rm div}\mathbf{u}(\mathbf{x})\,v\,d\mathbf{x}\,d\mathbf{y}+\int_{\Omega\times Y_f}{\rm div}\mathbf{u_r}(\mathbf{x},\mathbf{y})\,v\,d\mathbf{x}\,d\mathbf{y}.
\end{align*}
On the other hand, we have:
\begin{align*}
	\int_{\Omega}{\rm div}\,\mathbf{\hat{u}_\epsilon}\,v\,d\mathbf{x}&\rightarrow\int_{\Omega\times Y_s}{\rm div}\, \mathbf{u}(\mathbf{x})\,v\,d\mathbf{x}\,d\mathbf{y}\\
	&\quad+\int_{\Omega\times Y_s}{\rm div}_\mathbf{y}\, \mathbf{u_1}(\mathbf{x},\mathbf{y})\,v\,d\mathbf{x}\,d\mathbf{y}-\frac{1}{\gamma}\int_{\Omega\times Y_f}p_0(\mathbf{x},\mathbf{y})\,v\,d\mathbf{x}\,d\mathbf{y}.
\end{align*}
Hence, we obtain: 
\begin{align*}
	&\int_{\Omega\times Y_f}{\rm div}\mathbf{u}(\mathbf{x})\,v\,d\mathbf{x}\,d\mathbf{y}+\int_{\Omega\times Y_f}{\rm div}\mathbf{u_r}(\mathbf{x},\mathbf{y})\,v\,d\mathbf{x}\,d\mathbf{y}\\
	&=\int_{\Omega\times Y_s}{\rm div}_\mathbf{y}\, \mathbf{u_1}(\mathbf{x},\mathbf{y})\,v\,d\mathbf{x}\,d\mathbf{y}-\frac{1}{\gamma}\int_{\Omega\times Y_f}p_0(\mathbf{x},\mathbf{y})\,v\,d\mathbf{x}\,d\mathbf{y}.
\end{align*}
Therefore $\mathbf{u_1}$ and $\mathbf{u_r}$ satisfy the relation described by \eqref{s-f-rel}.
\end{proof}

\begin{remark}
	In the sequel, $\epsilon$ represents the subsequence involved in Lemma~\ref{lem:pe2sv}.  Observe that Lemmas~\ref{lem:2scalecvuep}-\ref{lem:pe2sv} hold simultaneously for that subsequence.
\end{remark}

\section{Derivation of the local problems.}
\label{sec:derivlocprob}  
{In the previous section, we have shown that in the solid phase, the two-scale limit $\mathbf{w_0}(\mathbf{x},\mathbf{y})$ is exactly the  $\mathbf{u}(\mathbf{x})$ in \eqref{w0-decomp}, whereas in the fluid phase, it is $\mathbf{u}(\mathbf{x})+\mathbf{u_r}(\mathbf{x},\mathbf{y})$. Also shown in the previous section is that for the solid phase, the gradient of $\{\hat{\mathbf{u}}_\epsilon\}$ two-scale converges to $\nabla\mathbf{u}+\nabla_y\mathbf{u}_1$ while in the fluid phase, it can only be concluded that the acoustic pressure $\{\hat{p}_\epsilon\}$ two-scale converges to $p_0$. Moreover, the two-scale limit $\mathbf{w_0}$ is related to the $E_0$-limit $\mathbf{\hat{u}_0}$ by \eqref{uowo} and $<\mathbf{w_0}>(\mathbf{x})=\mathbf{u(\mathbf{x})}+<\mathbf{u_r}>(\mathbf{x})$. In this section, we will first prove that $p
_0$ does not depend on $\mathbf{y}$. 
}

{The focus in this section is on the corrector term $\mathbf{u_1}$ of the gradient in the solid and the corrector term $\mathbf{u_r}$ for the fluid, given $\mathbf{u}(\mathbf{x})$ and $p_0(\mathbf{x})$. We first summarize the main results in the following theorems.}

\begin{theorem}[Local problem for $\mathbf{u}_1$]
{\label{sec:probu1} } 
The limit $p_0$ does not depend on $\mathbf{y}$. Furthermore, the local problem for  $\mathbf{u}_1$ is as follows
\begin{align}
\label{locprou1-2}
& \mbox{Find }\mathbf{u_1}\in \mathbf{H^1_p}(Y_s)/\mathbb{C}^3\mbox{ such that }\notag\\
&q\left(\mathbf{u_1}(\mathbf{x},\cdot),\mathbf{w}\right)=-\frac{\partial u^k}{\partial x_l}(\mathbf{x})\int_{Y_s}a_{ijkl}\overline{\frac{\partial w^i}{\partial y_j}}\,d\mathbf{y}-p_0(\mathbf{x})\int_{Y_s}\overline{{\rm div}_\mathbf{y}\mathbf{w}}\,d\mathbf{y}\\
&\forall \mathbf{w}\in\mathbf{H^1_p}(Y_s)/\mathbb{C}^3,\notag
\end{align}
where $q(\cdot,\cdot)$ represents the sesquilinear form given by:
\begin{equation}
\label{a}
q(\mathbf{v},\mathbf{w})=\int_{Y_s}a_{ijkl}\frac{\partial v^k}{\partial y_l}\overline{\frac{\partial w^i}{\partial y_j}}\,d\mathbf{y}=\int_{Y_s}a_{ijkl}  e_{ij}(\mathbf{v}) e_{kl}(\overline{\mathbf{w}})\,d\mathbf{y}.
\end{equation}
 This problem is uniquely solvable.
\end{theorem}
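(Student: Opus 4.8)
\emph{Proof proposal.} The plan is to test the Laplace--domain identity \eqref{LVF} against the oscillating functions $\mathbf{w}_\epsilon(\mathbf{x})=\epsilon\,\boldsymbol{\psi}(\mathbf{x}/\epsilon)\,\phi(\mathbf{x})$, with $\boldsymbol{\psi}\in\mathbf{C}_p^\infty$ and $\phi\in\mathscr{D}(\Omega)$, and to pass to the two--scale limit using the uniform bounds of Theorem~\ref{uniform_bound} together with the two--scale convergences in Lemmas~\ref{lem:due2scsol} and \ref{lem:pe2sv}. Since $\boldsymbol{\psi}$ is globally smooth we have $\llbracket\mathbf{w}_\epsilon\rrbracket_s^f=0$, so the interface term in $a^\epsilon$ vanishes identically; the force term and the two mass terms are $O(\epsilon)$ because of the extra factor $\epsilon$ and the bound \eqref{unifbounduep}; and the viscous term $\lambda\epsilon^2 b^\epsilon(\mathbf{\hat u_\epsilon},\mathbf{w}_\epsilon)$ is $O(\epsilon)$ since $\epsilon\|E_{ij}(\mathbf{\hat u_\epsilon})\|_{L^2(\Omega_\epsilon^f)}$ and $\|{\rm div}\,\mathbf{\hat u_\epsilon}\|_{L^2(\Omega_\epsilon^f)}$ are bounded by \eqref{unifboundb} and \eqref{unifbounddivu} while $E_{ij}(\mathbf{w}_\epsilon)$ and ${\rm div}\,\mathbf{w}_\epsilon$ remain bounded in $\mathbf{L}^2$. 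Hence only the elastic form $c^\epsilon(\mathbf{\hat u_\epsilon},\mathbf{w}_\epsilon)$ survives, and it must be analysed phase by phase, using $E_{ij}(\mathbf{w}_\epsilon)=e_{ij}(\boldsymbol{\psi})(\mathbf{x}/\epsilon)\phi(\mathbf{x})+O(\epsilon)$ and ${\rm div}\,\mathbf{w}_\epsilon={\rm div}_\mathbf{y}\boldsymbol{\psi}(\mathbf{x}/\epsilon)\phi(\mathbf{x})+O(\epsilon)$.

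First I would prove that $p_0$ does not depend on $\mathbf{y}$ by restricting the support of $\boldsymbol{\psi}$ within the period to $Y_f$. Then the solid part of $c^\epsilon$ drops out and, by Lemma~\ref{lem:pe2sv} (i.e.\ $\hat p_\epsilon=-\gamma\,{\rm div}\,\mathbf{\hat u_\epsilon}$ two--scale converges to $p_0$), the fluid part converges to $-\int_{\Omega\times Y_f}p_0(\mathbf{x},\mathbf{y})\,\overline{{\rm div}_\mathbf{y}\boldsymbol{\psi}(\mathbf{y})}\,\phi(\mathbf{x})\,d\mathbf{x}\,d\mathbf{y}$. Since this limit must be $0$ for every such $\boldsymbol{\psi}$ and every $\phi$, we get $\int_{Y_f}p_0(\mathbf{x},\cdot)\,\overline{{\rm div}_\mathbf{y}\boldsymbol{\psi}}\,d\mathbf{y}=0$ for a.e.\ $\mathbf{x}$; integrating by parts (boundary terms vanish since $\boldsymbol{\psi}$ is compactly supported in $Y_f$) gives $\nabla_\mathbf{y}p_0=0$, so $p_0=p_0(\mathbf{x})$.

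Next I would take a general $\boldsymbol{\psi}\in\mathbf{C}_p^\infty$. By \eqref{pue2scvs} and the symmetries \eqref{symmetry}, the solid part of $c^\epsilon$ converges to $\int_{\Omega\times Y_s}a_{ijkl}\bigl(\tfrac{\partial u^k}{\partial x_l}(\mathbf{x})+\tfrac{\partial u_1^k}{\partial y_l}(\mathbf{x},\mathbf{y})\bigr)\overline{\tfrac{\partial\psi^i}{\partial y_j}(\mathbf{y})}\,\phi(\mathbf{x})\,d\mathbf{x}\,d\mathbf{y}$; the fluid part, using that $p_0$ is now $\mathbf{y}$--independent together with the periodicity identity $\int_Y{\rm div}_\mathbf{y}\boldsymbol{\psi}\,d\mathbf{y}=0$, hence $\int_{Y_f}{\rm div}_\mathbf{y}\boldsymbol{\psi}=-\int_{Y_s}{\rm div}_\mathbf{y}\boldsymbol{\psi}$, converges to $\int_\Omega p_0(\mathbf{x})\,\phi(\mathbf{x})\int_{Y_s}\overline{{\rm div}_\mathbf{y}\boldsymbol{\psi}}\,d\mathbf{y}\,d\mathbf{x}$. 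Letting $\phi$ range over $\mathscr{D}(\Omega)$ forces, for a.e.\ $\mathbf{x}$,
\[
\int_{Y_s}a_{ijkl}\Bigl(\tfrac{\partial u^k}{\partial x_l}+\tfrac{\partial u_1^k}{\partial y_l}\Bigr)\overline{\tfrac{\partial\psi^i}{\partial y_j}}\,d\mathbf{y}+p_0(\mathbf{x})\int_{Y_s}\overline{{\rm div}_\mathbf{y}\boldsymbol{\psi}}\,d\mathbf{y}=0,
\]
which is exactly \eqref{locprou1-2} with $\mathbf{w}=\boldsymbol{\psi}|_{Y_s}$ once the data terms are moved to the right and $q$ is recognized from \eqref{a}. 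Both sides are unchanged when a constant is added to $\mathbf{w}$, and $\{\boldsymbol{\psi}|_{Y_s}:\boldsymbol{\psi}\in\mathbf{C}_p^\infty\}$ is dense in $\mathbf{H}^1_p(Y_s)$, so the identity extends to all $\mathbf{w}\in\mathbf{H}^1_p(Y_s)/\mathbb{C}^3$.

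For unique solvability, $q$ is Hermitian and bounded on $\mathbf{H}^1_p(Y_s)/\mathbb{C}^3$ (from the boundedness of $a_{ijkl}$), and coercive there because $\mathrm{Re}\,q(\mathbf{w},\mathbf{w})\ge c\int_{Y_s}|e(\mathbf{w})|^2\,d\mathbf{y}$ by \eqref{a-coercive}, combined with Korn's inequality for $Y$--periodic functions on the connected set $Y_s$ (hypothesis (iii); the only $Y$--periodic infinitesimal rigid displacements are constants, which are quotiented out). For a.e.\ fixed $\mathbf{x}$ the right--hand side of \eqref{locprou1-2} is a bounded antilinear functional on $\mathbf{H}^1_p(Y_s)/\mathbb{C}^3$ since $\nabla\mathbf{u}(\mathbf{x})$ and $p_0(\mathbf{x})$ are finite, so Lemma~\ref{laxmil} gives a unique $\mathbf{u}_1(\mathbf{x},\cdot)$, and the estimate $\|\mathbf{u}_1(\mathbf{x},\cdot)\|_{\mathbf{H}^1_p(Y_s)/\mathbb{C}^3}\le C(|\nabla\mathbf{u}(\mathbf{x})|+|p_0(\mathbf{x})|)$ places it in $\mathbf{L}^2(\Omega;\mathbf{H}^1_p(Y_s)/\mathbb{C}^3)$, consistent with Lemma~\ref{lem:due2scsol}. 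I expect the delicate point to be the limit passage in the fluid part of $c^\epsilon$ --- tracking the correct sign and using $\int_Y{\rm div}_\mathbf{y}\boldsymbol{\psi}=0$ to transfer the pressure load from $Y_f$ onto the integral over $Y_s$ --- rather than the coercivity, which is routine once the periodic Korn inequality is in hand.
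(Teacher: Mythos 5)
Your proposal is correct and, at its core, follows the same route as the paper: scale the test function by $\epsilon$, observe that only $c^\epsilon$ survives in the limit of \eqref{LVF}, pass to the two-scale limit via Lemmas~\ref{lem:due2scsol} and \ref{lem:pe2sv}, localize in the fluid to get $\nabla_{\mathbf{y}}p_0=0$, transfer the pressure term from $Y_f$ to $Y_s$, and conclude unique solvability from \eqref{a-coercive} plus the periodic Korn inequality. The one genuine difference is the test-function class: the paper uses two-phase test functions $\mathbf{w_s}+\mathbf{w_f}$ that may jump tangentially across $\Gamma$ but have matching normal traces, so the interface term must be argued to vanish in the limit and the pressure transfer uses the divergence theorem on $Y_f$ together with $\mathbf{w_s}\cdot\mathbf{n}=\mathbf{w_f}\cdot\mathbf{n}$; you instead take a single smooth $Y$-periodic $\boldsymbol{\psi}$, so the slip term $\llbracket\mathbf{w}_\epsilon\rrbracket_s^f$ vanishes identically and the transfer follows from the periodicity identity $\int_Y{\rm div}_{\mathbf{y}}\boldsymbol{\psi}\,d\mathbf{y}=0$. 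The price of your variant is the final density claim, that $\{\boldsymbol{\psi}|_{Y_s}:\boldsymbol{\psi}\in \mathbf{C}_p^\infty\}$ is dense in $\mathbf{H}^1_p(Y_s)$, which you assert without proof; it does hold, but it needs the periodic extension operator of Theorem~\ref{thm:ext2} (extend $\mathbf{w}\in\mathbf{H}^1_p(Y_s)$ to $\mathbf{H}^1_p$, mollify, restrict), so you should cite that result to close the step. With that reference added, your argument yields exactly \eqref{locprou1-2} with $q$ as in \eqref{a}, and your coercivity discussion matches the paper's.
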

\begin{proof}
We start by testing problem (\ref{LVF}) with $\mathbf{w}=\epsilon(\mathbf{w_s^\epsilon}+\mathbf{w_f^\epsilon})\phi$, where  $\mathbf{w_s}\in\mathbf{H^1_p(Y_s)}$, $\mathbf{w_f}\in\mathbf{H^1_p}(Y_f)$, $\mathbf{w_s}(\mathbf{y})=0$ for $\mathbf{y}\in Y_f$, $\mathbf{w_f}(\mathbf{y})=0$ for $\mathbf{y}\in Y_s$, $\mathbf{w_s}\cdot\mathbf{n}=\mathbf{w_f}\cdot\mathbf{n}$ on $\Gamma$ and $\phi\in\mathscr{D}(\Omega)$ to obtain: 
\begin{align*}
	&\epsilon\int_{\Omega_\epsilon^s}\hat{f}^i\overline{{w_s^\epsilon}^i\phi}d\mathbf{x}+\epsilon\int_{\Omega_\epsilon^f}\hat{f}^i\overline{{w_f^\epsilon}^i\phi}\,d\mathbf{x}\notag\\
	&=\lambda^2\,\epsilon\,\int_{\Omega_\epsilon^s}\rho^s \hat{u}_\epsilon^i\overline{{w_s^\epsilon}^i\phi}d\mathbf{x}+\lambda^2\,\epsilon\,\int_{\Omega_\epsilon^f}\rho^f\hat{u}_\epsilon^i\overline{{w_f^\epsilon}^i\phi}d\mathbf{x}+ c^{\epsilon}(\mathbf{\hat{u}_\epsilon},\epsilon(\mathbf{w_s^\epsilon}+\mathbf{w_f^\epsilon})\phi)\notag\\
	&\quad+\lambda\epsilon^2b^\epsilon\left(\mathbf{\hat{u}_\epsilon},\mathbf{\epsilon w_f^\epsilon\phi}\right)+\lambda\,\epsilon^2\int_{\Gamma_\epsilon}\alpha\llbracket\mathbf{\hat{u}_\epsilon}\rrbracket_s^f\cdot \overline{\llbracket(\mathbf{w_s^\epsilon}+\mathbf{w_f^\epsilon})\phi\rrbracket}_s^f\,d\sigma_\epsilon(\mathbf{x}).
\end{align*}

Observe that, as $\epsilon\downarrow0$, every term goes to $0$ except for $c^{\epsilon}(\mathbf{\hat{u}_\epsilon},\epsilon(\mathbf{w_s^\epsilon}+\mathbf{w_f^\epsilon})\phi)$.  We study this term in detail:
\begin{align*}
c^{\epsilon}(\mathbf{\hat{u}_\epsilon},\epsilon(\mathbf{w_s^\epsilon}+\mathbf{w_f^\epsilon})\phi)
&=\epsilon\int_{\Omega_\epsilon^s}a_{ijkl}\frac{\partial \hat{u}_\epsilon^k}{\partial x_l}\overline{\left({w_s^\epsilon}^i\frac{\partial \phi}{\partial x_j}\right)}d\mathbf{x}+\int_{\Omega_\epsilon^s}a_{ijkl}\frac{\partial \hat{u}_\epsilon^k}{\partial x_l}\overline{\left(\phi \left(\frac{\partial w_s^i}{\partial y_j}\right)^\epsilon\right)}d\mathbf{x}\\
&\quad+\epsilon\int_{\Omega_\epsilon^f}\gamma({\rm div} \mathbf{\hat{u}_\epsilon})\overline{\left(\nabla\phi\cdot\mathbf{w_f^\epsilon}\right)}d\mathbf{x}+\int_{\Omega_\epsilon^f}\gamma({\rm div} \mathbf{\hat{u}_\epsilon})\overline{\left(\phi\left({\rm div}_y\mathbf{w_f}\right)^\epsilon \right)}d\mathbf{x}.
\end{align*}
Note that the first and the third terms in the previous expression go to $0$ as $\epsilon\downarrow0$.  By Lemma~\ref{lem:due2scsol} (with $\psi=a_{ijkl}\overline{\left(\partial w_s^i/\partial y_j\right)}$) and by Lemma~\ref{lem:pe2sv} (with $\psi=\overline{{\rm div}_\mathbf{y}\mathbf{w_f}}$), we obtain the local problem for $\mathbf{u_1}$:
\begin{equation}
\label{locprou1}
\int_{Y_s}a_{ijkl}\left[\frac{\partial u^k}{\partial x_l}(\mathbf{x})+\frac{\partial u_1^k}{\partial y_l}(\mathbf{x},\mathbf{y})\right]\overline{\frac{\partial w_s^i}{\partial y_j}}(\mathbf{y})\,d\mathbf{y}-\int_{Y_f}p_0(\mathbf{x},\mathbf{y})\overline{{\rm div}_\mathbf{y}\mathbf{w_f}(\mathbf{y})}\,d\mathbf{y}=0,
\end{equation}
for all $\mathbf{w_s}\in\mathbf{H^1_p}(Y_s)$, $\mathbf{w_f}\in\mathbf{H^1_p}(Y_f)$ with $\mathbf{w_s}(\mathbf{y})=0$ for $\mathbf{y}\in Y_f$, $\mathbf{w_f}(\mathbf{y})=0$ for $\mathbf{y}\in Y_s$ and $\mathbf{w_s}\cdot\mathbf{n}=\mathbf{w_f}\cdot\mathbf{n}$ on $\Gamma$.

By choosing $\mathbf{w_s=0}$, (\ref{locprou1}) becomes:
\begin{equation*}
-\int_{Y_f}p_0(\mathbf{x},\mathbf{y})\overline{{\rm div}_\mathbf{y}\mathbf{w}(\mathbf{y})}\,d\mathbf{y}=0,
\end{equation*}
for all $\mathbf{w}\in\mathbf{H^1_p}(Y_f)$ with $ \mathbf{w}\cdot \mathbf{n}=0$ on $\partial Y_f$.  Therefore, it can be concluded that $\displaystyle\int_{Y_f}\nabla_\mathbf{y}p_0(\mathbf{x},\mathbf{y})\cdot\overline{\mathbf{w}}\,d\mathbf{y}=0$ by a density argument and integration by parts. Hence $p_0$ does not depend on $\mathbf{y}$.  In other words, $p_0\in L^2(\Omega)$.  
  To further simplify \eqref{locprou1}, we observe that, for all $\mathbf{w}\in V_Y$ such that $\mathbf{w}\cdot \mathbf{n}$ is continuous across $\Gamma$, we have:
\begin{equation*}
\int_{Y_f}p_0(\mathbf{x})\overline{{\rm div}_\mathbf{y}\mathbf{w}(\mathbf{y})}\,d\mathbf{y}=p_0(\mathbf{x})\int_{\partial Y_f}\overline{\mathbf{w}}\cdot\mathbf{n}d\sigma(\mathbf{y})=-p_0(\mathbf{x})\int_{Y_s}\overline{{\rm div}_\mathbf{y}\mathbf{w}(\mathbf{y})}\,d\mathbf{y}.
\end{equation*}
This leads to \eqref{locprou1-2}. To prove the uniqueness, we need to check that $q(\cdot,\cdot)$ is coercive on $\mathbf{H^1_p}(Y_s)/\mathbb{C}^3$, which means that there exists $c>0$ such that:
\begin{equation}
	\label{a-coerc}
	q(\mathbf{w},\mathbf{w})\geq C\norm{\mathbf{w}}^2_{\mathbf{H^1_p}(Y_s)/\mathbb{C}^3},\hspace{4mm}\forall \mathbf{w}\in\mathbf{H^1_p}(Y_s)/\mathbb{C}^3.
\end{equation} 
 But (\ref{a-coerc}) follows from (\ref{symmetry}), (\ref{a-coercive}), and an application of Korn's inequality for $\mathbf{H^1_p}(Y_s)/\mathbb{C}^3$.
 \end{proof}
\begin{theorem}[The local problem for $\mathbf{u_r}$]
\label{sec:probur}
The local problem for $\mathbf{u_r}$ is 
\begin{align}
\label{locprob-ur}
&\lambda^2\rho^f\int_{Y_f}u_r^i(\mathbf{x},\mathbf{y})\overline{w^i(\mathbf{y})}\,d\mathbf{y}+2\lambda\mu\int_{Y_f}\frac{\partial u_r^i}{\partial y_j}(\mathbf{x},\mathbf{y})\overline{\frac{\partial w^i}{\partial y_j}}\,d\mathbf{y}+\lambda\alpha\int_\Gamma u_r^i(\mathbf{x},\mathbf{y})\overline{w^i}\,d\sigma(\mathbf{y})\\
&=\left(\hat{f}^i(\mathbf{x})-\lambda^2\rho^fu^i(\mathbf{x})-\frac{\partial p_0}{\partial x_i}(\mathbf{x})\right)\int_{Y_f}\overline{w^i}\,d\mathbf{y},\hspace{3mm}\forall\mathbf{w}\in W.\notag
\end{align}
The above problem is coercive in the $V_Y$ norm and, hence, has a unique solution. Note that It is the weak formulation of the cell problem:
\begin{equation}
\label{cell_ur}
\begin{cases}
  \displaystyle\lambda^2\rho^f u_r^i(\mathbf{x}, \mathbf{y})+ 2\lambda\mu \frac{\partial e_{ij}(\mathbf{u_r})}{\partial y_j} =\left(\hat{f}^i(\mathbf{x})-\lambda^2\rho^f u^i(\mathbf{x})-\frac{\partial p_0}{\partial x_i}(\mathbf{x})\right) &\text{ in } Y_f ,\\
  \displaystyle 2 \mu  e_{ij}(\mathbf{u_r}) n^j=\alpha u_r^i &\text{ on }\Gamma, \hspace{2mm} \forall\mathbf{w}\in W.
\end{cases}
\end{equation}
\end{theorem}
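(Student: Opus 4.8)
The plan is to follow the scheme used for Theorem~\ref{sec:probu1}, this time with a test function adapted to the fluid. I would test the Laplace--transformed variational problem \eqref{LVF} with $\mathbf{w}=\mathbf{w}^\epsilon\phi$, where $\mathbf{w}\in W$ (so $\mathbf{w}$ is $Y$--periodic, $\mathbf{w}=0$ on $Y_s$, ${\rm div}_\mathbf{y}\mathbf{w}=0$) and $\phi\in\mathscr{D}(\Omega)$; since $\mathbf{w}$ vanishes on $Y_s$, the rescaled field $\mathbf{w}^\epsilon\phi$ lies in $V$, vanishes on $\Omega_\epsilon^s$ and on $\partial\Omega$, and ${\rm div}(\mathbf{w}^\epsilon\phi)=\mathbf{w}^\epsilon\cdot\nabla\phi$. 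Then I would pass to the limit $\epsilon\downarrow0$ term by term, along the subsequence fixed in Lemmas~\ref{lem:2scalecvuep}--\ref{lem:pe2sv}.

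The two mass integrals and the force integral over $\Omega_\epsilon^s$, as well as the solid part of $c^\epsilon$, vanish identically because $\mathbf{w}^\epsilon\phi\equiv0$ on $\Omega_\epsilon^s$. The fluid part of $c^\epsilon(\mathbf{\hat u}_\epsilon,\mathbf{w}^\epsilon\phi)$ equals $-\int_{\Omega_\epsilon^f}\hat p_\epsilon\,\overline{\mathbf{w}^\epsilon\cdot\nabla\phi}\,d\mathbf{x}$; by Lemma~\ref{lem:pe2sv}, the fact (Theorem~\ref{sec:probu1}) that $p_0$ is independent of $\mathbf{y}$, and one integration by parts in $\mathbf{x}$, this converges to $\int_{\Omega\times Y_f}\frac{\partial p_0}{\partial x_i}\,\overline{w^i}\,\phi\,d\mathbf{x}\,d\mathbf{y}$. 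For the viscous term $\lambda\epsilon^2 b^\epsilon(\mathbf{\hat u}_\epsilon,\mathbf{w}^\epsilon\phi)$, the $\eta$--contribution is $O(\epsilon^2)$ by \eqref{unifbounddivu}; in the $\mu$--contribution one writes $E_{ij}(\mathbf{w}^\epsilon\phi)=\frac1\epsilon\phi\,(e_{ij}(\mathbf{w}))^\epsilon+O(1)$, uses \eqref{unifboundu} to discard the $O(1)$ part (which produces an $O(\epsilon)$ error), and invokes \eqref{2sclduep} — so that $\epsilon E_{ij}(\mathbf{\hat u}_\epsilon)$ two--scale converges to $e_{ij}(\mathbf{w_0})$, which on $Y_f$ equals $e_{ij}(\mathbf{u_r})$ by \eqref{w0-decomp} — to obtain $2\lambda\mu\int_{\Omega\times Y_f}e_{ij}(\mathbf{u_r})\,\overline{e_{ij}(\mathbf{w})}\,\phi$. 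The fluid mass term converges to $\lambda^2\rho^f\int_{\Omega\times Y_f}(u^i+u_r^i)\overline{w^i}\phi$ and the force term to $\int_{\Omega\times Y_f}\hat f^i\,\overline{w^i}\phi$. Collecting the $\mathbf{y}$--independent pieces on the right, letting $\phi$ range over $\mathscr{D}(\Omega)$, and using the integration--by--parts identity relating $\int_{Y_f}e_{ij}(\mathbf{v})\overline{e_{ij}(\mathbf{w})}$ to $\int_{Y_f}\frac{\partial v^i}{\partial y_j}\overline{\frac{\partial w^i}{\partial y_j}}$ for ${\rm div}_\mathbf{y}$--free fields, one arrives at \eqref{locprob-ur}.

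The main obstacle is the interface term $\lambda\epsilon\int_{\Gamma_\epsilon}\alpha\llbracket\mathbf{\hat u}_\epsilon\rrbracket_s^f\cdot\overline{\llbracket\mathbf{w}^\epsilon\rrbracket_s^f\,\phi}\,d\sigma_\epsilon$. Because $\mathbf{w}=0$ on $Y_s$ one has $\llbracket\mathbf{w}^\epsilon\rrbracket_s^f=(\mathbf{w}|_\Gamma)^\epsilon$, and the uniform bound \eqref{unifboundgamma} supplies exactly the normalization needed for two--scale convergence of surface integrals; the delicate part — for which a dedicated technical lemma is required — is to show that $\llbracket\mathbf{\hat u}_\epsilon\rrbracket_s^f$ two--scale converges on $\Gamma_\epsilon$ to $\mathbf{u_r}(\mathbf{x},\mathbf{y})\big|_\Gamma$. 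On the solid side the $H^1$--bound \eqref{uni_deri_solid} (together with the extension operator of Theorem~\ref{thm:ext1}) pins the two--scale limit of the trace of $\mathbf{\hat u}_\epsilon$ down to $\mathbf{u}(\mathbf{x})$ alone, the corrector $\mathbf{u_1}$ being invisible to the trace; on the fluid side, where $\mathbf{\hat u}_\epsilon$ carries only an $\epsilon$--weighted $H^1$ bound, one must instead exploit the full--amplitude two--scale limit $\mathbf{w_0}=\mathbf{u}+\mathbf{u_r}$ of Lemma~\ref{lem:due2scsol} (and $\mathbf{u_r}\in L^2(\Omega;\mathbf{H^1_p}(Y_f))$) to identify the trace two--scale limit $\mathbf{u}(\mathbf{x})+\mathbf{u_r}(\mathbf{x},\mathbf{y})|_\Gamma$. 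Subtracting the two sides yields the jump limit $\mathbf{u_r}|_\Gamma$, so this term contributes $\lambda\alpha\int_{\Omega\times\Gamma}u_r^i\,\overline{w^i}\,\phi\,d\sigma(\mathbf{y})\,d\mathbf{x}$, completing \eqref{locprob-ur}.

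Well--posedness I would prove exactly as in Theorem~\ref{existence_lap}: divide the left side of \eqref{locprob-ur} by $\lambda$ and take real parts. Since for $\mathbf{w}\in W$ one has $\norm{\mathbf{w}}_{V_Y}^2=\norm{\mathbf{w}}_{H^1(Y_f)}^2+\norm{\mathbf{w}|_\Gamma}_{L^2(\Gamma)}^2$ (the solid part of the norm vanishes and the jump reduces to the fluid trace), the real part is bounded below by $\min\big(\rho^f\,{\rm Re}\,\lambda,\,2\mu\big)\norm{\mathbf{w}}_{H^1(Y_f)}^2+\alpha\norm{\mathbf{w}|_\Gamma}_{L^2(\Gamma)}^2\ge C\norm{\mathbf{w}}_{V_Y}^2$ for ${\rm Re}\,\lambda>0$; boundedness is Cauchy--Schwarz, and the right side of \eqref{locprob-ur} is a bounded antilinear functional on $W$ because $\bigl|\int_{Y_f}\overline{w^i}\bigr|\le C\norm{\mathbf{w}}_{V_Y}$. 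Lemma~\ref{laxmil} (Lax--Milgram) then gives a unique $\mathbf{u_r}$. Finally, to see that \eqref{locprob-ur} is the weak form of the cell problem \eqref{cell_ur}, I would first test against divergence--free fields compactly supported in $Y_f$ to recover, via a De Rham argument for the ${\rm div}_\mathbf{y}$--constraint, the momentum equation in $Y_f$, and then, testing against general $\mathbf{w}\in W$ and substituting that equation back, isolate the surviving integral over $\Gamma$, which produces the slip condition $2\mu\,e_{ij}(\mathbf{u_r})n^j=\alpha u_r^i$.
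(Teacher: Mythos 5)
Your proposal follows essentially the same route as the paper: test \eqref{LVF} with $\mathbf{w}^\epsilon\phi$, $\mathbf{w}\in W$, $\phi\in\mathscr{D}(\Omega)$, pass to the two-scale limit term by term using Lemmas~\ref{lem:2scalecvuep}--\ref{lem:pe2sv} (the viscous term exactly as in Lemma~\ref{lem:e2pu}), treat the interface term by identifying the trace two-scale limits separately on the solid side (giving $\mathbf{u}(\mathbf{x})$) and on the fluid side (giving $\mathbf{u}+\mathbf{u_r}$), and close with a Lax--Milgram coercivity argument in $V_Y$; this is precisely the structure of the paper's proof, with your ``dedicated technical lemma'' being Lemma~\ref{l:twoscaleconvinterf}.

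One point in your sketch is not correct as stated: the claim that the bound \eqref{unifboundgamma} ``supplies exactly the normalization needed'' for the surface two-scale compactness. Estimate \eqref{unifboundgamma} reads $\epsilon\norm{\llbracket\mathbf{\hat u}_\epsilon\rrbracket_s^f}_{L^2(\Gamma_\epsilon)}\leq C$, i.e. $\epsilon^2\int_{\Gamma_\epsilon}|\llbracket\mathbf{\hat u}_\epsilon\rrbracket_s^f|^2\,d\sigma_\epsilon\leq C$, whereas Theorem~\ref{th:2scv-bdary} requires $\epsilon\int_{\Gamma_\epsilon}|\cdot|^2\,d\sigma_\epsilon\leq C$ --- one order stronger. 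The paper points this out explicitly and obtains the required bound not from \eqref{unifboundgamma} but from a cell-by-cell scaled trace inequality, which yields $\epsilon\int_{\Gamma_\epsilon}|\mathbf{\hat u}_\epsilon|^2\,d\sigma_\epsilon\leq C\bigl(\norm{\mathbf{\hat u}_\epsilon}^2_{L^2}+\epsilon^2\norm{\nabla\mathbf{\hat u}_\epsilon}^2_{L^2}\bigr)\leq C$ on each side of the interface, using \eqref{unifbounduep} and \eqref{unifboundu} (and \eqref{uni_deri_solid} on the solid side). Since the rest of your interface argument (identifying the trace limits from the volume two-scale limits via integration by parts, as in Proposition~2.6 of \cite{Allaire95}) is the same as the paper's, replacing that one justification repairs the gap; the remainder of your proposal, including the coercivity computation on $W$ and the recovery of the strong form \eqref{cell_ur}, is consistent with the paper (noting only that the paper itself asserts the strong form without detailing the De Rham/multiplier step you describe).
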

{As can be seen in the theorem above, the interface term resulting from the slip condition is part of the local problem for  $\mathbf{u_r}$. The following lemma is hence necessary in proving Theorem \ref{sec:probur} so we state it here. Note that because of the discontinuity on the interface, we cannot directly apply Proposition 2.6 in  \cite{Allaire95}. Instead, we generalized that proposition to the following lemma. The main point  is to show that the two-scale convergence limit on $\Gamma_\epsilon$  in the sense of \eqref{2scvboud} is indeed the trace of the two-scale convergence limit, cf. Definition \ref{2cale-conv} for our case.} 
\begin{lemma}
\label{l:twoscaleconvinterf} A subsequence can be extracted from the sequence in Lemma~\ref{lem:2scalecvuep} such that the following convergence holds.
\begin{align}
&\lambda\,\epsilon\,\alpha\int_{\Gamma_\epsilon^f}\mathbf{\hat{u}_\epsilon}\cdot \overline{\llbracket\mathbf{w^\epsilon}\phi\rrbracket}_s^f\,d\sigma_\epsilon(\mathbf{x})-\lambda\,\epsilon\alpha\int_{\Gamma_\epsilon^s}\mathbf{\hat{u}_\epsilon}\cdot \overline{\llbracket\mathbf{w^\epsilon}\phi\rrbracket}_s^f\,d\sigma_\epsilon(\mathbf{x})\notag\\
&\rightarrow \lambda\,\alpha\int_{\Omega}\int_{\Gamma}\mathbf{u_r}(\mathbf{x})\overline{\phi(\mathbf{x},\mathbf{y})}\,\overline{\llbracket\mathbf{w}(\mathbf{y})\rrbracket}_s^f\,d\sigma(\mathbf{y}) d\mathbf{x}\,, \mbox{  \hspace{0.2in}as   } \epsilon \to 0.\label{lab_4}
\end{align}
\end{lemma}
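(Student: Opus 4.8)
The plan is to convert the surface integral over $\Gamma_\epsilon$ into volume integrals over the two phases by the divergence theorem and then pass to the two‑scale limit using the convergences already obtained in Lemmas~\ref{lem:2scalecvuep} and~\ref{lem:due2scsol}. Throughout, $\epsilon$ denotes the subsequence fixed in Lemma~\ref{lem:2scalecvuep}, which by the Remark following Lemma~\ref{lem:pe2sv} also realizes $\mathbf{u}$, $\mathbf{u_1}$, $\mathbf{u_r}$ and $p_0$; $\phi\in\mathscr{D}(\Omega)$. Since $\phi$ is continuous across $\Gamma_\epsilon$ we have $\llbracket\mathbf{w}^\epsilon\phi\rrbracket_s^f=(\llbracket\mathbf{w}\rrbracket_s^f)^\epsilon\phi$, so the left‑hand side of \eqref{lab_4} equals
\[
\lambda\,\epsilon\,\alpha\int_{\Gamma_\epsilon}\Big(\hat{\mathbf{u}}_\epsilon\big|_{\Gamma_\epsilon^f}-\hat{\mathbf{u}}_\epsilon\big|_{\Gamma_\epsilon^s}\Big)\cdot\overline{(\llbracket\mathbf{w}\rrbracket_s^f)^\epsilon\phi}\;d\sigma_\epsilon(\mathbf{x}),
\]
and it suffices to identify the surface two‑scale limit of each one‑sided trace of $\hat{\mathbf{u}}_\epsilon$ separately. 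This is exactly where the jump obstructs a direct use of Proposition~2.6 of \cite{Allaire95}: $\hat{\mathbf{u}}_\epsilon$ does not belong to $\mathbf{H}^1(\Omega)$, and on the fluid side it is not even uniformly bounded in $\mathbf{H}^1(\Omega_\epsilon^f)$ — only $\|\hat{\mathbf{u}}_\epsilon\|_{\mathbf{L}^2(\Omega_\epsilon^f)}+\epsilon\|\nabla\hat{\mathbf{u}}_\epsilon\|_{\mathbf{L}^2(\Omega_\epsilon^f)}\le C$ holds, by \eqref{unifbounduep} and \eqref{unifboundu}. The required generalization is a surface two‑scale statement under this weaker, $\epsilon$‑scaled bound on a one‑sided perforated domain.

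To obtain it, fix a smooth $Y$‑periodic scalar $\psi$ and choose a $Y$‑periodic vector field $\boldsymbol{\Xi}$ of class $C^1$ with $\boldsymbol{\Xi}\cdot\mathbf{n}=\psi$ on $\Gamma$ (such a field exists, e.g.\ as the gradient of the solution of a Neumann cell problem with datum $\psi$ on $\Gamma$; the case $\psi\in L^2_p$, and hence $\psi$ a component of the periodic field $\overline{\llbracket\mathbf{w}\rrbracket_s^f}$, is then reached by density). Applying the divergence theorem on $\Omega_\epsilon^f$, componentwise, to $\hat u_\epsilon^i\,\boldsymbol{\Xi}^\epsilon\phi$ — the part of $\partial\Omega_\epsilon^f$ on $\partial\Omega$ contributing nothing because $\phi$ is compactly supported and $\boldsymbol{\Xi}^\epsilon\phi\cdot\mathbf{n}=\psi^\epsilon\phi$ on $\Gamma_\epsilon$ — yields
\[
\epsilon\int_{\Gamma_\epsilon}\hat u_\epsilon^i\big|_{\Gamma_\epsilon^f}\,\psi^\epsilon\phi\;d\sigma_\epsilon
=\int_{\Omega_\epsilon^f}(\epsilon\,\partial_l\hat u_\epsilon^i)\,\Xi_l^\epsilon\phi\;d\mathbf{x}
+\int_{\Omega_\epsilon^f}\hat u_\epsilon^i\,({\rm div}_\mathbf{y}\boldsymbol{\Xi})^\epsilon\phi\;d\mathbf{x}
+\epsilon\int_{\Omega_\epsilon^f}\hat u_\epsilon^i\,\boldsymbol{\Xi}^\epsilon\cdot\nabla\phi\;d\mathbf{x}.
\]
Using \eqref{2scluep}--\eqref{2sclduep} restricted to $Y_f$ (together with $\epsilon\nabla\hat{\mathbf{u}}_\epsilon\to0$ strongly in $\mathbf{L}^2(\Omega_\epsilon^s)$ from \eqref{uni_deri_solid}, and the explicit $\epsilon$ in the last term), the right‑hand side converges to $\int_\Omega\phi(\mathbf{x})\int_{Y_f}{\rm div}_\mathbf{y}(w_0^i\boldsymbol{\Xi})\,d\mathbf{y}\,d\mathbf{x}$; one more integration by parts on $Y_f$, in which the fluxes on the faces of $\partial Y$ cancel by the $Y$‑periodicity of $\mathbf{w_0}$ and $\boldsymbol{\Xi}$, turns this into $\int_\Omega\int_\Gamma (w_0^i)^{(f)}(\mathbf{x},\mathbf{y})\,\psi(\mathbf{y})\,\phi(\mathbf{x})\,d\sigma(\mathbf{y})\,d\mathbf{x}$, where $(\cdot)^{(f)}$ is the trace on $\Gamma$ from the $Y_f$ side. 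Running the same argument on $\Omega_\epsilon^s$ — where the gradient term drops out for the same reason — produces the corresponding solid‑side trace of $\mathbf{w_0}$.

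Finally, by the decomposition \eqref{w0-decomp}, $\mathbf{w_0}=\mathbf{u}(\mathbf{x})+\mathbf{u_r}(\mathbf{x},\mathbf{y})$ with $\mathbf{u_r}=\mathbf{0}$ on $Y_s$; hence the solid‑side trace of $\mathbf{w_0}$ is $\mathbf{u}(\mathbf{x})$ and the fluid‑side trace is $\mathbf{u}(\mathbf{x})+\mathbf{u_r}(\mathbf{x},\mathbf{y})$ on $\Gamma$, so the difference of the two one‑sided surface limits of $\hat{\mathbf{u}}_\epsilon$ is $\int_\Omega\int_\Gamma\mathbf{u_r}(\mathbf{x},\mathbf{y})\cdot\overline{\llbracket\mathbf{w}(\mathbf{y})\rrbracket_s^f\,\phi(\mathbf{x})}\,d\sigma(\mathbf{y})\,d\mathbf{x}$ after choosing $\psi$ to be the components of $\overline{\llbracket\mathbf{w}\rrbracket_s^f}$ and reinstating the constant $\lambda\,\alpha$; this is the right‑hand side of \eqref{lab_4}. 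The main obstacle is the fluid‑phase step: since only the $\epsilon$‑scaled bound $\|\hat{\mathbf{u}}_\epsilon\|_{\mathbf{L}^2(\Omega_\epsilon^f)}+\epsilon\|\nabla\hat{\mathbf{u}}_\epsilon\|_{\mathbf{L}^2(\Omega_\epsilon^f)}\le C$ is available there, the trace of $\hat{\mathbf{u}}_\epsilon$ on $\Gamma_\epsilon$ cannot be controlled by the usual $\mathbf{H}^{1/2}$ trace inequality and one is forced to argue entirely through the divergence‑theorem identity above; one must also verify carefully the cancellation of the periodic boundary fluxes on $\partial Y$ (so that only $\Gamma$ survives) and carry out the density step upgrading smooth $\psi$ and smooth oscillating test functions to $\mathbf{w}\in W$ (or $V_Y$) whose traces are merely square‑integrable.
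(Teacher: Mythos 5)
Your proposal is correct, but it takes a genuinely different route from the paper's. The paper first proves the scaled trace bound $\epsilon\int_{\Gamma_\epsilon}|\hat{\mathbf{u}}_\epsilon|^2\,d\sigma_\epsilon(\mathbf{x})\leq C$ by a cell-by-cell rescaling of the trace theorem, then invokes the surface two-scale compactness result (Theorem~\ref{th:2scv-bdary}, from \cite{Allaire95}) to extract abstract surface limits of the two one-sided traces, and finally identifies those limits with the traces of $\mathbf{w_0}$ (namely $\mathbf{u}$ on the solid side and $\mathbf{u}+\mathbf{u_r}$ on the fluid side) through the divergence-theorem identity with arbitrary oscillating test fields $\boldsymbol{\varphi}(\mathbf{x},\mathbf{x}/\epsilon)$, in the spirit of Proposition~2.6 of \cite{Allaire95}; subtraction then gives \eqref{lab_4}. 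You bypass the compactness theorem altogether: by constructing a $Y$-periodic field $\boldsymbol{\Xi}$ with $\boldsymbol{\Xi}\cdot\mathbf{n}=\psi$ on $\Gamma$, you convert the specific surface integrals directly into volume integrals, pass to the limit using only \eqref{2scluep}--\eqref{2sclduep} with test functions such as $\chi_{Y_f}\Xi_l\in L^2_p$, and recover the trace of $\mathbf{w_0}$ via one integration by parts in the cell, where periodicity cancels the fluxes on $\partial Y$; combined with \eqref{w0-decomp} this yields the same limit. What each approach buys: the paper's route produces a reusable abstract statement (surface two-scale limits of the traces exist and equal the traces of the volume limits), while yours is more elementary and self-contained for the particular integrand at hand. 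One caveat you should make explicit: your argument, as written, proves the convergence for smooth $\psi$, whereas the lemma is used with $\mathbf{w}\in W$, whose interface trace is only in $H^{1/2}(\Gamma)$; the density step you flag at the end is not free, since controlling the approximation error uniformly in $\epsilon$ requires exactly the scaled trace bound $\epsilon\int_{\Gamma_\epsilon}|\hat{\mathbf{u}}_\epsilon|^2\,d\sigma_\epsilon(\mathbf{x})\leq C$ (note that \eqref{unifboundgamma} alone is too weak), i.e.\ the very estimate the paper establishes by its scaling argument. So your route postpones rather than eliminates that ingredient --- though, to be fair, the paper is equally terse about the analogous passage from continuous test functions in Theorem~\ref{th:2scv-bdary} to $\mathbf{w}\in W$.
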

\begin{proof}
For the solid part, we consider $\widetilde {\mathbf{\hat{u}_\epsilon}^s }$, the extension by zero of $\mathbf{\hat{u}_\epsilon}^s:=\mathbf{\hat{u}_\epsilon}\Big|_{\Omega_\epsilon^s}$, which coinsides with $\mathbf{\hat{u}_\epsilon}$ in ${\Omega_\epsilon^s}$.  Then, there exist $\mathbf{u}(x)\in H_0^1(\Omega)$ and $\mathbf{u_1}(x,y)\in L^2(\Omega,H_p^1(Y_s)/\mathbb{R})$ such that $\widetilde{\mathbf{\hat{u}}_\epsilon^s}$ two-scale converges to $\mathbf{u}(x)\chi(Y_s)$ and $\nabla\widetilde{\mathbf{\hat{u}}_\epsilon^s}$ two-scale converges to $(\nabla\mathbf{u}(x)+\nabla_y\mathbf{u_1}(x,y))\chi(Y_s)$, cf. Theorem~2.9 in \cite{Allaire1992}.    

The uniform boundedness of the interface integral in \eqref{unifboundgamma} is still one order shy of the assumption stated in Theorem \ref{th:2scv-bdary}.  To get a stronger uniform bound, we apply the following scaling argument. Fix an $\epsilon$-periodic cell in $\Omega$, say the cell indexed by $\mathbf{k}$, the trace theorem implies:
\begin{align*}
\int_{\Gamma^{s}_{\epsilon}}|\widetilde{\mathbf{\hat{u}_{\epsilon,k}}^s}(\mathbf{x})|^2d\sigma_\epsilon(\mathbf{x})&=\epsilon^2  \int_{\Gamma^{{s}}}|\widetilde{\mathbf{\hat{u}_{\epsilon,k}^s}}(\epsilon\,\mathbf{y})|^2 d\sigma(\mathbf{y})
\\
&\le \epsilon^2 C(Y_s) \left(\int_{Y_s} |\mathbf{\hat{u}_{\epsilon,k}^s}(\epsilon\,\mathbf{y})|^2+|\nabla_{\mathbf{y}}\mathbf{\hat{u}_{\epsilon,k}^s}(\epsilon\,\mathbf{y})|^2\right) d\mathbf{y}\\
&=C(Y_s)\epsilon^2 \left( \int_{\epsilon Y_s}\epsilon^{-3}  |\mathbf{\hat{u}_{\epsilon,k}^s(\mathbf{x})}|^2+\epsilon^{-1}|\nabla\mathbf{\hat{u}_{\epsilon,k}^s(\mathbf{x})}|^2 d\mathbf{x}\right).
\end{align*}
Hence, by summing over all $\mathbf{k}$, we arrive at the bound needed for Theorem \ref{th:2scv-bdary}:
\[
\epsilon \int_{\Gamma^{s}_{\epsilon}}|\widetilde{\mathbf{\hat{u}_\epsilon}^s}|^2d\sigma_\epsilon(\mathbf{x}) \le C(Y_s)\left(\norm{\mathbf{\hat{u}_\epsilon}^s}^2_{L^2(\Omega_\epsilon^s)}+\epsilon^2\norm{\nabla\mathbf{\hat{u}_\epsilon}^s}^2_{L^2(\Omega_\epsilon^s)}\right)\leq C.
\]

By Theorem~\ref{th:2scv-bdary}, there exists $\mathbf{v}\in L^2(\Omega,L^2(\Gamma))$ such that:  $$\epsilon\int_{\Gamma^{s}_\epsilon}\widetilde{\mathbf{\hat{u}_\epsilon^s}}(\mathbf{x})\phi\left(\mathbf{x},\frac{\mathbf{x}}{\epsilon}\right)d\sigma_\epsilon(\mathbf{x})\rightarrow\int_{\Omega}\int_{{\Gamma}^{s}}\mathbf{v}(\mathbf{x},\mathbf{y})\phi(\mathbf{x},\mathbf{y})d\mathbf{x}d\sigma(\mathbf{y}), $$ for all $\phi(\mathbf{x},\mathbf{y})\in C[\overline{\Omega},C_p(Y)]$.  Following the proof of Proposition~2.6 in \cite{Allaire95}, for any vector-valued smooth test function $\boldsymbol{\varphi}(\mathbf{x},\mathbf{y})$, we have: 
\begin{align*}
	\epsilon\int_{\Omega_\epsilon^s}\nabla\mathbf{\hat{u}_\epsilon^s}(\mathbf{x})\boldsymbol{\varphi}\left(\mathbf{x},\frac{\mathbf{x}}{\epsilon}\right)d\mathbf{x}&=-\epsilon\int_{\Omega_\epsilon^s}\mathbf{\hat{u}_\epsilon^s}(\mathbf{x}){\rm div}_\mathbf{x}\boldsymbol{\varphi}\left(\mathbf{x},\frac{\mathbf{x}}{\epsilon}\right)d\mathbf{x}\\
	&-\int_{\Omega_\epsilon^s}\mathbf{\hat{u}_\epsilon^s}(\mathbf{x}){\rm div}_\mathbf{y}\boldsymbol{\varphi}\left(\mathbf{x},\frac{\mathbf{x}}{\epsilon}\right)d\mathbf{x}\\
	&+\epsilon\int_{\Gamma^{s}_\epsilon}\widetilde{\mathbf{\hat{u}_\epsilon^s}}(\mathbf{x}) (\boldsymbol{\varphi}\left(\mathbf{x},\frac{\mathbf{x}}{\epsilon}\right)\cdot \mathbf{n}_s) d\sigma_\epsilon(\mathbf{x}).
\end{align*}

Passing to the two-scale limit in each term, we obtain:
\begin{equation*}
	0=-\int_{\Omega}\int_{Y_s }\mathbf{u}(\mathbf{x}){\rm div}_\mathbf{y}\boldsymbol{\varphi}(\mathbf{x},\mathbf{y})d\mathbf{x}d\mathbf{y}+\int_{\Omega}\int_{\Gamma^{s}}\mathbf{v}(\mathbf{x},\mathbf{y}) (\boldsymbol{\varphi}\left(\mathbf{x},\mathbf{y}\right)\cdot \mathbf{n}_s )d\sigma(\mathbf{y}) d\mathbf{x}.
\end{equation*}
Therefore, $\displaystyle\int_{\Omega}\int_{\Gamma}(\mathbf{v}(\mathbf{x},\mathbf{y})-\mathbf{u}(\mathbf{x}))\varphi\left(\mathbf{x},\mathbf{y}\right)\cdot \mathbf{n}_s d\sigma(\mathbf{y}) d\mathbf{x}=0$, which implies that, for $\mathbf{y}\in \Gamma$, $\mathbf{v}(\mathbf{x},\mathbf{y})=\mathbf{u}(\mathbf{x})$ for all $\mathbf{x}\in\Omega$ and hence the following two-scale convergence result holds: 
\begin{equation}
	\lambda\,\epsilon\alpha\int_{\Gamma^{s}_\epsilon}\widetilde{\mathbf{\hat{u}_\epsilon}^s}\cdot \overline{\llbracket \mathbf{w^\epsilon}\phi\rrbracket}_s^f\,d\sigma_\epsilon(\mathbf{x})\rightarrow \lambda\,\alpha\int_{\Omega}\int_{\Gamma^{s}}\mathbf{u}(\mathbf{x})\overline{\phi(\mathbf{x})(\mathbf{w}^f(\mathbf{y})-\mathbf{w}^s(\mathbf{y}))}d\sigma(\mathbf{y}) d\mathbf{x}. \label{Gamma_solid}
\end{equation}

For the fluid part, we know that $\norm{\mathbf{\hat{u}_\epsilon}^f}_{L^2(\Omega_\epsilon^f)}$ and  $\norm{\epsilon\nabla\mathbf{\hat{u}_\epsilon}^f}_{L^2(\Omega_\epsilon^f)}$  are uniformly bounded. We use $\widetilde {\mathbf{\hat{u}_\epsilon}^f }$ (resp. $\epsilon\nabla\widetilde {\mathbf{\hat{u}_\epsilon}^f }$) to denote the extension by zero of $\mathbf{\hat{u}_\epsilon}^f$ (resp. $\epsilon\nabla\mathbf{\hat{u}_\epsilon}^f$), which is the restriction of $\mathbf{\hat{u}_\epsilon}$ (resp. $\epsilon\nabla\mathbf{\hat{u}_\epsilon}$) to ${\Omega_\epsilon^f}$ and apply similar arguments as above.  By Proposition~1.14(ii) of \cite{Allaire1992}, there exist $\boldsymbol{\zeta}\in L^2(\Omega;(H_p^1(Y_f))^3)$ and $\boldsymbol{\xi}\in \mathbf{L^2(\Omega;(H_p^1(Y_f))^9)}$ such that $\widetilde{\mathbf{\hat{u}_\epsilon}^f}$ two-scale converges to $\boldsymbol{\zeta}$ and $\epsilon\nabla\widetilde{\mathbf{\hat{u}_\epsilon}^f}$ two-scale converges to $\boldsymbol{\xi}$, with $\boldsymbol{\zeta}(\mathbf{x},\mathbf{y})=0$ for $\mathbf{y}\in Y_s$ and $\boldsymbol{\xi}(\mathbf{x},\mathbf{y})=0$ for $\mathbf{y}\in Y_s$.   For any test function $\varphi(\mathbf{x},\mathbf{y})\in \mathscr{D}(\Omega;C_p^\infty(Y))$ and $\boldsymbol{\psi}(\mathbf{x},\mathbf{y})\in \boldsymbol{\mathscr{D}}(\Omega;C_p^\infty(Y))$, with $\varphi(\mathbf{x},\mathbf{y})=0$ for $\mathbf{y}\in Y_s$ and $\boldsymbol{\psi}(\mathbf{x},\mathbf{y})=0$ for $\mathbf{y}\in Y_s$, we have: 
\begin{align*}
	\lim_{\epsilon\to0}\int_{\Omega_\epsilon^f}(\mathbf{\hat{u}_\epsilon^f})(\mathbf{x})\varphi\left(\mathbf{x},\frac{\mathbf{x}}{\epsilon}\right)d\mathbf{x}&=\lim_{\epsilon\to0}\int_{\Omega}(\widetilde{\mathbf{\hat{u}_\epsilon^f}})(\mathbf{x})\varphi\left(\mathbf{x},\frac{\mathbf{x}}{\epsilon}\right)d\mathbf{x}\\
	&=\int_{\Omega}\int_{Y_f }\boldsymbol{\zeta}(\mathbf{x},\mathbf{y})\varphi(\mathbf{x},\mathbf{y})d\mathbf{y}d\mathbf{x},\\
	\lim_{\epsilon\to0}\epsilon\int_{\Omega_\epsilon^f}\nabla\mathbf{\hat{u}_\epsilon^f}(\mathbf{x})\cdot\boldsymbol{\psi}\left(\mathbf{x},\frac{\mathbf{x}}{\epsilon}\right)d\mathbf{x}&=\int_{\Omega}\int_{Y_f }\boldsymbol{\xi}(\mathbf{x},\mathbf{y})\cdot\boldsymbol{\psi}(\mathbf{x},\mathbf{y})d\mathbf{y}d\mathbf{x}.
\end{align*}

Observe that:
\begin{align*}
	\epsilon\int_{\Omega_\epsilon^f}\nabla\mathbf{\hat{u}_\epsilon^f}(\mathbf{x})\cdot\boldsymbol{\psi}\left(\mathbf{x},\frac{\mathbf{x}}{\epsilon}\right)d\mathbf{x}&=-\epsilon\int_{\Omega_\epsilon^f}\mathbf{\hat{u}_\epsilon^f}(\mathbf{x}){\rm div}_\mathbf{x}\boldsymbol{\psi}\left(\mathbf{x},\frac{\mathbf{x}}{\epsilon}\right)d\mathbf{x}\\
	&\quad-\int_{\Omega_\epsilon^f}\mathbf{\hat{u}_\epsilon^f}(\mathbf{x}){\rm div}_\mathbf{y}\boldsymbol{\psi}\left(\mathbf{x},\frac{\mathbf{x}}{\epsilon}\right)d\mathbf{x}.
\end{align*}
Passing to the two-scale limit in each term above, we obtain the relation between $\boldsymbol{\zeta}$ and $\boldsymbol{\xi}$:
\begin{equation*}
	\int_{\Omega}\int_{Y_f }\boldsymbol{\xi}(\mathbf{x},\mathbf{y})\cdot\psi(\mathbf{x},\mathbf{y})d\mathbf{y}d\mathbf{x}=-\int_{\Omega}\int_{Y_f }\boldsymbol{\zeta}(\mathbf{x},\mathbf{y}){\rm div}_\mathbf{y}\psi(\mathbf{x},\mathbf{y})d\mathbf{y}d\mathbf{x};
\end{equation*}
from which we have $\boldsymbol{\xi}=\nabla_y\boldsymbol{\zeta}$, for $j=1,2,3$. Therefore, $\widetilde{\mathbf{\hat{u}}_\epsilon^f}$ two-scale converges to $\boldsymbol{\zeta}(\mathbf{x},\mathbf{y})\chi(Y_f)$ and $\epsilon\nabla\widetilde{\mathbf{\hat{u}}_\epsilon^f}$ two-scale converges to $\boldsymbol{\xi}(\mathbf{x},\mathbf{y})\chi(Y_f)=\nabla_y\boldsymbol{\zeta}(\mathbf{x},\mathbf{y})\chi(Y_f)$.   

Applying the scaled trace inequality in $\Omega_\epsilon^f$ by regarding $\Gamma^{f}_\epsilon=\partial\Omega
_\epsilon^f$, leads to:  $$\epsilon\int_{\Gamma^{f}_\epsilon}|\widetilde{\mathbf{\hat{u}_\epsilon}^f}|^2d\sigma_\epsilon(\mathbf{x})\leq C\norm{\mathbf{\hat{u}_\epsilon}^f}^2_{L^2(\Omega_\epsilon^f)}+\epsilon^2\norm{\nabla\mathbf{\hat{u}_\epsilon}^f}^2_{L^2(\Omega_\epsilon^f)}\leq C.$$ By Theorem~\ref{th:2scv-bdary}, we have that there exists $\mathbf{h}\in \mathbf{L^2}(\Omega,\mathbf{L^2}(\Gamma))$ such that: $$\epsilon\int_{\Gamma^{f}_\epsilon}\widetilde{\mathbf{\hat{u}_\epsilon^f}}(\mathbf{x})\phi\left(\mathbf{x},\frac{\mathbf{x}}{\epsilon}\right)d\sigma_\epsilon(\mathbf{x})\rightarrow\int_{\Omega}\int_{\Gamma^{f}}\mathbf{h}(\mathbf{x},\mathbf{y})\phi(\mathbf{x},\mathbf{y})d\sigma(\mathbf{y})d\mathbf{x},$$ for all $\phi(\mathbf{x},\mathbf{y})\in C[\overline{\Omega},C_p(Y)]$.  Note that, for any vector-valued smooth test function $\boldsymbol{\varphi}(\mathbf{x},\mathbf{y})$, we have: 
\begin{align*}
	\epsilon\int_{\Omega_\epsilon^f}\nabla\mathbf{\hat{u}_\epsilon^f}(\mathbf{x})\cdot\boldsymbol{\varphi}\left(\mathbf{x},\frac{\mathbf{x}}{\epsilon}\right)d\mathbf{x}&=-\epsilon\int_{\Omega_\epsilon^f}\mathbf{\hat{u}_\epsilon^f}(\mathbf{x}){\rm div}_\mathbf{x} \boldsymbol{\varphi}\left(\mathbf{x},\frac{\mathbf{x}}{\epsilon}\right)d\mathbf{x}\\
	&\quad-\int_{\Omega_\epsilon^f}\mathbf{\hat{u}_\epsilon^f}(\mathbf{x}){\rm div}_\mathbf{y}\boldsymbol{\varphi}\left(\mathbf{x},\frac{\mathbf{x}}{\epsilon}\right)d\mathbf{x}\\
	&\quad+\epsilon\int_{\Gamma^{f}_\epsilon}\widetilde{\mathbf{\hat{u}_\epsilon^f}}(\mathbf{x})\boldsymbol{\varphi}\left(\mathbf{x},\frac{\mathbf{x}}{\epsilon}\right)\cdot \mathbf{n} \,d\sigma_\epsilon(\mathbf{x}).
\end{align*}
Passing to the two-scale limit in each term, we obtain:
\begin{align*}
	\int_{\Omega}\int_{Y_f }\boldsymbol{\xi}(\mathbf{x},\mathbf{y})\cdot \boldsymbol{\varphi}(\mathbf{x},\mathbf{y})d\mathbf{y}d\mathbf{x}&=\int_{\Omega}\int_{Y_f }\nabla_\mathbf{y}\boldsymbol{\zeta}(\mathbf{x},\mathbf{y})\cdot\boldsymbol{\varphi}(\mathbf{x},\mathbf{y})d\mathbf{y}d\mathbf{x}\\
	&=-\int_{\Omega}\int_{Y_f }\boldsymbol{\zeta}(\mathbf{x},\mathbf{y}){\rm div}_\mathbf{y}\boldsymbol{\varphi}(\mathbf{x},\mathbf{y})d\mathbf{y}d\mathbf{x}\\
	&\quad+\int_{\Omega}\int_{\Gamma^{f}}\mathbf{h}(\mathbf{x},\mathbf{y})\boldsymbol{\varphi}\left(\mathbf{x},\mathbf{y}\right)\cdot \mathbf{n} \,d\sigma(\mathbf{y}) d\mathbf{x}.
\end{align*}
An application of integration by parts to the second integral above leads to: $$\int_{\Omega}\int_{\Gamma^{f}}(\mathbf{h}(\mathbf{x},\mathbf{y})-\boldsymbol{\zeta}(\mathbf{x},\mathbf{y}))\boldsymbol{\varphi}\left(\mathbf{x},\mathbf{y}\right)\cdot \mathbf{n} \,d\sigma(\mathbf{y}) d\mathbf{x}=0,$$ which implies that $\boldsymbol{\zeta}(\mathbf{x},\mathbf{y})\big|_{\Gamma^{f}}=\mathbf{h}(\mathbf{x},\mathbf{y})$.  Recall that $\boldsymbol{\zeta}(\mathbf{x},\mathbf{y})=\mathbf{u}(\mathbf{x})+\mathbf{u_r}(\mathbf{x},\mathbf{y})$ for $\mathbf{y}\in Y_f$.

Then, we obtain the following two-scale convergence result:
\begin{align}
	&\lambda\,\epsilon\alpha\int_{\Gamma^{f}_\epsilon}\widetilde{\mathbf{\hat{u}_\epsilon}^f}\cdot \overline{\llbracket \mathbf{w^\epsilon}\phi\rrbracket}_s^f\,d\sigma_\epsilon(\mathbf{x})\notag\\
	&\rightarrow \lambda\,\alpha\int_{\Omega}\int_{\Gamma^{f}}(\mathbf{u}(\mathbf{x})+\mathbf{u_r}(\mathbf{x},\mathbf{y}))\overline{\phi(\mathbf{x})(\mathbf{w}^f(\mathbf{y})-\mathbf{w}^s(\mathbf{y}))}d\sigma(\mathbf{y}) d\mathbf{x}. \label{Gamma_fluid}
\end{align}
The lemma is then proved by subtracting \eqref{Gamma_solid} from \eqref{Gamma_fluid}.
\end{proof}

{The following Lemma will also be needed in the proof of Theorem \ref{sec:probur}}.
\begin{lemma}
\label{lem:e2pu}
	Let $\epsilon$ be the subsequence involved in Lemma~\ref{lem:2scalecvuep}.  Then, as $\epsilon\downarrow0$, the following holds:
\begin{equation*}
	\epsilon^2 b^\epsilon(\hat{\mathbf{u}}_\epsilon,\mathbf{w}^\epsilon)=\epsilon^2\int_{\Omega}b^\epsilon_{ijkl}\frac{\partial \hat{u}_\epsilon^k}{\partial x_l}\frac{\partial{w^i}^\epsilon}{\partial x_j}\phi\,d\mathbf{x}\rightarrow\mu_{ijkl}\int_{\Omega\times Y_f}\frac{\partial w_o^k}{\partial y_l}(\mathbf{x},\mathbf{y})\frac{\partial w^i}{\partial y_j}(\mathbf{y})\phi(\mathbf{x})\,d\mathbf{x}\,d\mathbf{y},
\end{equation*}
for all $\mathbf{w}=\left\{w^i\right\}\in\mathbf{H^1_p}(Y_f)$, for all $\phi\in\mathscr{K}(\overline{\Omega})$, where $\mu_{ijkl}=\mu(\delta_{ik}\delta_{jl}+\delta_{jk}\delta_{il})$.
\end{lemma}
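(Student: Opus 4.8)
The plan is to trade the two factors of $\epsilon$ against the $\epsilon$-scaling created by differentiating the oscillating test factor $\mathbf{w}^\epsilon$, and then invoke the two-scale limit \eqref{2sclduep} already established in Lemma~\ref{lem:2scalecvuep} (along the subsequence fixed there). First I would use $\partial (w^i)^\epsilon/\partial x_j=\epsilon^{-1}(\partial w^i/\partial y_j)^\epsilon$ to rewrite
\[
\epsilon^2\int_{\Omega}b^\epsilon_{ijkl}\,\frac{\partial\hat{u}_\epsilon^k}{\partial x_l}\,\frac{\partial (w^i)^\epsilon}{\partial x_j}\,\phi\,d\mathbf{x}
=\int_{\Omega}\Bigl(\epsilon\,\frac{\partial\hat{u}_\epsilon^k}{\partial x_l}\Bigr)\,b^\epsilon_{ijkl}\,\Bigl(\frac{\partial w^i}{\partial y_j}\Bigr)^{\!\epsilon}\,\phi\,d\mathbf{x}.
\]
By \eqref{unifboundu} the family $\{\epsilon\,\partial\hat{u}_\epsilon^k/\partial x_l\}_\epsilon$ is bounded in $L^2(\Omega)$, so it is the rescaled gradient — not $\hat{\mathbf{u}}_\epsilon$ itself — that carries a two-scale limit here, which is precisely the quantity governed by \eqref{2sclduep}.

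Next I would write $b^\epsilon_{ijkl}(\mathbf{x})=b_{ijkl}(\mathbf{x}/\epsilon)$, with $b_{ijkl}$ the $Y$-periodic coefficient equal to $\eta\,\delta_{ij}\delta_{kl}+\mu_{ijkl}$ on $Y_f$ and vanishing on $Y_s$, and set $\psi_{kl}(\mathbf{y}):=b_{ijkl}(\mathbf{y})\,\partial w^i/\partial y_j(\mathbf{y})$. Since $b_{ijkl}$ is bounded, $Y$-periodic and supported in $\tilde{Y_f}$, and $\mathbf{w}\in\mathbf{H^1_p}(Y_f)$, the function $\psi_{kl}$ (extended by $0$ on $\tilde{Y_s}$) lies in $L^2_p$; moreover the integrand above is exactly $(\epsilon\,\partial\hat{u}_\epsilon^k/\partial x_l)\,\psi_{kl}^\epsilon\,\phi$. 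Applying \eqref{2sclduep} with the test function $\psi_{kl}$ therefore gives
\[
\int_{\Omega}\Bigl(\epsilon\,\frac{\partial\hat{u}_\epsilon^k}{\partial x_l}\Bigr)\psi_{kl}^\epsilon\,\phi\,d\mathbf{x}\;\longrightarrow\;\int_{\Omega\times Y}\frac{\partial w_o^k}{\partial y_l}\,\psi_{kl}(\mathbf{y})\,\phi(\mathbf{x})\,d\mathbf{x}\,d\mathbf{y}=\int_{\Omega\times Y_f}b_{ijkl}\,\frac{\partial w_o^k}{\partial y_l}\,\frac{\partial w^i}{\partial y_j}\,\phi\,d\mathbf{x}\,d\mathbf{y}.
\]

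To finish I would split $b_{ijkl}=\eta\,\delta_{ij}\delta_{kl}+\mu_{ijkl}$ inside the last integral. The $\eta$-piece contributes $\eta\int_{\Omega\times Y_f}({\rm div}_\mathbf{y}\mathbf{w_o})\,({\rm div}_\mathbf{y}\mathbf{w})\,\phi\,d\mathbf{x}\,d\mathbf{y}$, which is $0$ because the two-scale limit $\mathbf{w_o}$ satisfies the incompressibility constraint ${\rm div}_\mathbf{y}\mathbf{w_o}=0$ of \eqref{divwo}; the remaining $\mu$-piece is exactly $\mu_{ijkl}\int_{\Omega\times Y_f}(\partial w_o^k/\partial y_l)(\partial w^i/\partial y_j)\,\phi\,d\mathbf{x}\,d\mathbf{y}$, which is the asserted limit.

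I do not expect a genuine obstacle: the argument is a bookkeeping of $\epsilon$-powers followed by a single application of an established two-scale limit. The only points requiring care are (i) recognizing that the uniform bound available in the fluid phase is on $\epsilon\,\nabla\hat{\mathbf{u}}_\epsilon$ (via $\epsilon\|\hat{\mathbf{u}}_\epsilon\|_V\le C$), which is what dictates the above distribution of $\epsilon$'s, and (ii) the disappearance of the bulk-viscosity ($\eta$) term, forced by ${\rm div}_\mathbf{y}\mathbf{w_o}=0$. Admissibility of the $L^2_p$ test function $\psi_{kl}$ is already built into the statement of Lemma~\ref{lem:2scalecvuep}, so no separate density argument is needed.
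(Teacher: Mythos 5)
Your proposal is correct and follows the same route as the paper's (much terser) proof: rewrite $\partial({w^i})^\epsilon/\partial x_j=\epsilon^{-1}(\partial w^i/\partial y_j)^\epsilon$, apply the two-scale limit \eqref{2sclduep} with the $L^2_p$ test function $\psi_{kl}=b_{ijkl}\,\partial w^i/\partial y_j$ (extended by zero on $Y_s$), and use \eqref{divwo} to eliminate the bulk-viscosity ($\eta$) contribution, leaving the $\mu_{ijkl}$ term. Your version merely spells out the bookkeeping of $\epsilon$-powers and the splitting of $b_{ijkl}$ that the paper leaves implicit.
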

\begin{proof}
Notice that $\displaystyle\frac{\partial}{\partial x_j}({w^i}^\epsilon(x))=\frac{1}{\epsilon}\frac{\partial w^i}{\partial y_j}\big|_{y=\frac{x}{\epsilon}}$.  Choosing $\psi=\displaystyle b_{ijkl}\,\frac{\partial w^i}{\partial y_j}$ in (\ref{2sclduep}), with $\mathbf{w}\in\mathbf{H^1_p}(Y_f)$, we obtain the result, due to (\ref{divwo}) and $\psi^\epsilon=\displaystyle \epsilon \,b^\epsilon_{ijkl}\,\frac{\partial {w^i}^\epsilon}{\partial x_j}$.
\end{proof}
Now, we have all the  ingredients needed for proving Theorem \ref{sec:probur}.
\begin{proof}[Proof of Theorem \ref{sec:probur}]
For $\mathbf{u_r}$, we first note that $\mathbf{u_r}\in \mathbf{L^2}(\Omega;W)$; see Lemma~\ref{lem:due2scsol}. Now, we use the test function $\mathbf{w^\epsilon}\phi$, $\mathbf{w}\in W$ and $\phi\in\mathscr{D}(\Omega)$ in (\ref{LVF}), to obtain:
\begin{align}
\label{equrproblemeps}
	\int_{\Omega_\epsilon^f}\hat{\mathbf{f}}\cdot\overline{\mathbf{w}^\epsilon\phi}\,d\mathbf{x}&=\lambda^2\int_{\Omega_\epsilon^f}\rho^f  \hat{\mathbf{u}}_\epsilon\cdot \overline{{\mathbf{w}^\epsilon}\phi} \,d\mathbf{x}+c^{\epsilon}(\mathbf{\hat{u}_\epsilon},\mathbf{w^\epsilon}\phi)\notag\\
&
\quad+\lambda\epsilon^2b^\epsilon\left(\mathbf{\hat{u}_\epsilon},\mathbf{w^\epsilon}\phi\right)+\lambda\,\epsilon\int_{\Gamma_\epsilon}\alpha\llbracket\mathbf{\hat{u}_\epsilon}\rrbracket_s^f\cdot \overline{\llbracket\mathbf{w^\epsilon}\phi\rrbracket}_s^f\,d\sigma_\epsilon(\mathbf{x}).
\end{align}
Then, by letting $\epsilon\downarrow0$ in (\ref{equrproblemeps}), and using Lemmas~\ref{lem:2scalecvuep}, \ref{lem:e2pu}, and \ref{lem:pe2sv}, we obtain:
\begin{align}
	&\int_{\Omega\times Y_f}\hat{\mathbf{f}}(\mathbf{x})\cdot\overline{\mathbf{w}(\mathbf{y})\phi(\mathbf{x})}\,d\mathbf{x}\,d\mathbf{y}\label{lab_1}\\
	&=\lambda^2\int_{\Omega\times Y_f}\rho^f(\mathbf{u}(\mathbf{x})+\mathbf{u_r}(\mathbf{x},\mathbf{y}))\cdot \overline{\mathbf{w}(\mathbf{y})\phi(\mathbf{x})}\,d\mathbf{x}\notag \\
	&\quad+\lim_{\epsilon\to0}c^{\epsilon}(\mathbf{\hat{u}_\epsilon},\mathbf{w^\epsilon}\phi)+\lambda\epsilon^2b^\epsilon\left(\mathbf{\hat{u}_\epsilon},\mathbf{w^\epsilon}\phi\right)\notag\\
	&\quad+\lim_{\epsilon\to 0} \left(\lambda\,\epsilon\,\alpha\int_{\Gamma_\epsilon^f}\mathbf{\hat{u}_\epsilon}\cdot \overline{\llbracket\mathbf{w^\epsilon}\phi\rrbracket}_s^f\,d\sigma_\epsilon(\mathbf{x})-\lambda\,\epsilon\alpha\int_{\Gamma_\epsilon^s}\mathbf{\hat{u}_\epsilon}\cdot \overline{\llbracket\mathbf{w^\epsilon}\phi\rrbracket}_s^f\,d\sigma_\epsilon(\mathbf{x}) \right).\notag
\end{align}
With a straightforward calculation, applying Lemma~\ref{lem:2scalecvuep} and Lemma~\ref{lem:pe2sv}, and integrating by parts, the following limits can be concluded
\begin{align}
\lim_{\epsilon\to 0} \lambda\epsilon^2b^\epsilon\left(\mathbf{\hat{u}_\epsilon},\mathbf{w^\epsilon}\phi\right)&=2\mu\lambda \int_\Omega\int_{Y_f} \overline{\phi(\mathbf{x})}\overline{e_{ij}(\mathbf{w})}e_{ij}(\mathbf{u}(\mathbf{x})+\mathbf{u_r}(\mathbf{x},\mathbf{y}))\, d\mathbf{y}\,d\mathbf{x}\label{lab_2} \\
&= 2\mu\lambda \int_\Omega\int_{Y_f} \overline{\phi(\mathbf{x})}\overline{\frac{\partial w^i}{\partial y_j}} \frac{\partial u_r^i}{\partial y_j}\, d\mathbf{y}\,d\mathbf{x}.\notag \\
\lim_{\epsilon\to 0} c^{\epsilon}(\mathbf{\hat{u}_\epsilon},\mathbf{w^\epsilon}\phi)&= \int_\Omega \int_{Y_f} \nabla p_0(\mathbf{x})  \overline{\mathbf{w}(\mathbf{y}) \phi(\mathbf{x})}\, d\mathbf{y}\,d\mathbf{x}. \label{lab_3}
\end{align}
Taking into account Lemma \ref{l:twoscaleconvinterf} and the two equations above, \eqref{lab_1} leads to the problem for $\mathbf{u_r}$ in \eqref{locprob-ur}.
\end{proof}
In preparation for deriving the homogenized equations in the next section, we calculate $\mathbf{u_1}$ in terms of $\mathbf{u}$ and $p_0$.  To do this, we seek a solution of the form: 
\begin{equation}
\label{u1-formula}
\mathbf{u_1}(\mathbf{x},\mathbf{y})=-\frac{\partial u^k}{\partial x_l}(\mathbf{x})\boldsymbol{\chi_k^l}(\mathbf{y})-p_0(\mathbf{x})\boldsymbol{\chi}(\mathbf{y}).
\end{equation}
with $\boldsymbol{\chi}$, $\boldsymbol{\chi_i^j}\in\mathbf{H^1_p}(Y_s)/\mathbb{C}^3$, $1\leq i,j\leq3$, real-valued vector functions, independent of $\mathbf{x}$.  It can be verified with a straightforward calculation that the vectors $\boldsymbol{\chi}$ and $\boldsymbol{\chi_i^j}$ satisfy the following equations:
\begin{align}
	q\left(\boldsymbol{\chi},\mathbf{w}\right)&=\int_{Y_s}\overline{{\rm div}_\mathbf{y}\mathbf{w}}\,d\mathbf{y},\hspace{3mm}\forall \mathbf{w}\in\mathbf{H^1_p}(Y_s)/\mathbb{C}^3,\label{a-loc-chi}\\
	q\big(\boldsymbol{\chi_i^j},\mathbf{w}\big)&=\int_{Y_s}a_{ijkl}\overline{\frac{\partial w^k}{\partial y_l}}\,d\mathbf{y},\hspace{3mm}\forall \mathbf{w}\in\mathbf{H^1_p}(Y_s)/\mathbb{C}^3,\label{a-loc-chi-ij}
\end{align}
respectively, which are uniquely defined by (\ref{a-coerc}) and are independent of $\lambda$.

We set:
\begin{equation*}
	\beta_{ij}=-\int_{Y_s}{\rm div}_\mathbf{y}\boldsymbol{\chi_i^j}\,d\mathbf{y},\hspace{3mm}\beta=\int_{Y_s}{\rm div}_\mathbf{y}\boldsymbol{\chi}\,d\mathbf{y}.
\end{equation*}
Note that $\beta=q(\boldsymbol{\chi},\boldsymbol{\chi})\ge0$ (see (\ref{a-loc-chi})).

Equation (\ref{u1-formula}) allows us to write $p_0(\mathbf{x})$ in terms of $\mathbf{u}(\mathbf{x})$ and $\langle\mathbf{u_r}\rangle(\mathbf{x})$ as follows.  By substitution of (\ref{u1-formula}) into (\ref{s-f-rel}), we get:
\begin{equation}
	\label{p0uuep}
	\delta^{-1}p_0=\beta_{kl}\frac{\partial u^k}{\partial x_l}-\Pi\,{\rm div}\,\mathbf{u}-{\rm div}\,\langle\mathbf{u_r}\rangle,
\end{equation}
where $\delta$ and $\Pi$ are given by:
\begin{equation*}
	\delta=\left(\frac{\Pi}{\gamma}+\beta\right)^{-1}>0,\hspace{4mm}\Pi=\frac{\left|Y_f\right|}{\left|Y\right|}=|Y_f|>0.
\end{equation*}

\section{The homogenized problem.} 
\label{HomogProbConvTh}

{In this section, we derive the governing equations for $p_0(\mathbf{x})$ and $\mathbf{w_0}$, the two-scale limit of $\mathbf{\hat{u}_\epsilon}$. As will be seen in the theorem below, this homogenized problem is posed in a six dimensional space for $\mathbf{u}$ and $\mathbf{u_r}$.}

\begin{theorem}
\label{Macroscopic equations}
For every $\lambda$ such that $\lambda>\lambda_0>r>1$, the homogenized problem for $\mathbf{w_0}=\mathbf{u}+\mathbf{u_r}$ is the solution to the following uniquely solvable equation:
\begin{align}
\label{maceq-conn-fin}
	&\frac{1}{\lambda}\int_{\Omega}\hat{f}^i\overline{(w^i+\langle\mathbf{w_r}\rangle^i)}\,d\mathbf{x}\\
	&\quad=\lambda\rho^f\int_{\Omega\times Y_f}(u^i+u^i_r)\overline{(w^i+w^i_r)}\,d\mathbf{x}\,d\mathbf{y}+\lambda\left(1-\Pi\right)\rho^s\int_{\Omega}u^i\overline{w^i}\,d\mathbf{x}\notag\\
	&\qquad+\frac{1}{\lambda}\int_{\Omega}q_{ijlk}\frac{\partial u^k}{\partial x_l}\overline{\frac{\partial w^i}{\partial x_j}}\,d\mathbf{x}+2\mu\int_{\Omega\times Y_f}\frac{\partial u_r^i}{\partial y_j}\overline{\frac{\partial w_r^i}{\partial y_j}}\,d\mathbf{x}\,d\mathbf{y}\notag\\
	&\qquad+\alpha\int_{\Omega}\int_{\Gamma}\mathbf{u_r}(\mathbf{x},\mathbf{y})\cdot\overline{\mathbf{w_r}(\mathbf{x},\mathbf{y})}\,d\sigma(\mathbf{y})\,d\mathbf{x}\notag\\
	&\qquad+\frac{\delta}{\lambda}\int_{\Omega}\left(\beta_{kl}\frac{\partial u^k}{\partial x_l}-\Pi{\rm div}\mathbf{u}-{\rm div}\langle\mathbf{u_r}\rangle\right)\left(\beta_{ij}\overline{\frac{\partial w^i}{\partial x_j}}-\Pi\overline{{\rm div}\mathbf{w}}-\overline{{\rm div}\langle\mathbf{w_r}\rangle}\right)\,d\mathbf{x},\notag\\
	&=:F(\mathbf{u}+\mathbf{u_r},\mathbf{w}+\mathbf{w_r})\notag
	\end{align}
for all $\mathbf{w}\in\mathbf{H_0^1}(\Omega)$ and all $\mathbf{w_r}\in\mathbf{H}(\Omega;W)$, which is defined as 
$$\mathbf{H}(\Omega;W)=\left\{\mathbf{w}\,:\,\mathbf{w}\in\mathbf{L^2}(\Omega,W),\langle\mathbf{w}\rangle\in E_0(\Omega_\epsilon^s\cup\Omega_\epsilon^f)\right\},$$ 
and is a Hilbert space with the norm:  
$$\norm{\mathbf{w}}_{\mathbf{H}}=\left(\norm{\mathbf{w}}^2_{\mathbf{L^2}(\Omega;W)}+\norm{{\rm div}\langle\mathbf{w}\rangle}^2_{\mathbf{L^2}(\Omega)}\right)^{1/2}.$$
\end{theorem}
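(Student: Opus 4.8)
The plan is to derive \eqref{maceq-conn-fin} by substituting a suitable two-scale test function into the Laplace-domain identity \eqref{LVF} (after dividing by $\lambda$, so that the $\lambda$ and $1/\lambda$ coefficient pattern of \eqref{maceq-conn-fin} appears), passing to the two-scale limit term by term with the lemmas of the previous two sections, and finally eliminating the corrector $\mathbf{u_1}$ and the pressure $p_0$ via \eqref{u1-formula} and \eqref{p0uuep}; unique solvability is then a Lax--Milgram argument. The decomposition $\mathbf{w_0}=\mathbf{u}+\mathbf{u_r}$ is unique (the restriction of $\mathbf{w_0}$ to $Y_s$ is $\mathbf{u}$, so $\mathbf{u_r}=\mathbf{w_0}-\mathbf{u}$ on $Y_f$), and the natural space is $\mathbf{H_0^1}(\Omega)\times\mathbf{H}(\Omega;W)$, because $\mathbf{u}\in\mathbf{H_0^1}(\Omega)$ and $\mathbf{u_r}\in\mathbf{L^2}(\Omega;W)$ by Lemma \ref{lem:due2scsol}, while $\langle\mathbf{u_r}\rangle=\hat{\mathbf{u}}_0-\mathbf{u}\in E_0$ by \eqref{uowo} and \eqref{weakLap}.

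First I would fix $\mathbf{w}\in(\mathscr{D}(\Omega))^3$ and $\mathbf{w_r}(\mathbf{x},\mathbf{y})=\sum_m\phi_m(\mathbf{x})\boldsymbol{\psi}_m(\mathbf{y})$ with $\phi_m\in\mathscr{D}(\Omega)$ and $\boldsymbol{\psi}_m\in C_p^\infty\cap W$ ($Y$-periodic, vanishing on $Y_s$, divergence-free in $\mathbf{y}$), and test \eqref{LVF} divided by $\lambda$ with $\mathbf{w}^\epsilon(\mathbf{x}):=\mathbf{w}(\mathbf{x})+\mathbf{w_r}(\mathbf{x},\mathbf{x}/\epsilon)\in V$. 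Letting $\epsilon\downarrow0$: the two inertial terms converge, by Lemmas \ref{lem:2scalecvuep}--\ref{lem:due2scsol} together with $\mathbf{w_r}|_{Y_s}=0$ and $|Y_s|=1-\Pi$, to $\lambda\rho^f\int_{\Omega\times Y_f}(u^i+u_r^i)\overline{(w^i+w_r^i)}$ and $\lambda(1-\Pi)\rho^s\int_\Omega u^i\overline{w^i}$; the term $\frac1\lambda c^\epsilon(\hat{\mathbf{u}}_\epsilon,\mathbf{w}^\epsilon)$ splits into a solid part converging, by Lemma \ref{lem:due2scsol}, to $\frac1\lambda\int_{\Omega\times Y_s}a_{ijkl}\left(\frac{\partial u^k}{\partial x_l}+\frac{\partial u_1^k}{\partial y_l}\right)\overline{\frac{\partial w^i}{\partial x_j}}$ and a fluid part $-\frac1\lambda\int_{\Omega_\epsilon^f}\hat p_\epsilon\,\overline{(\mathrm{div}_\mathbf{x}\mathbf{w}+\mathrm{div}_\mathbf{x}\mathbf{w_r})}$ converging, by Lemma \ref{lem:pe2sv} and the fact that $p_0$ is independent of $\mathbf{y}$, to $-\frac1\lambda\int_\Omega p_0\left(\Pi\,\overline{\mathrm{div}\,\mathbf{w}}+\overline{\mathrm{div}\langle\mathbf{w_r}\rangle}\right)$ --- here the hypothesis $\mathrm{div}_\mathbf{y}\mathbf{w_r}=0$ is used precisely to suppress the otherwise singular $1/\epsilon$ contribution. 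The viscous term $\epsilon^2 b^\epsilon(\hat{\mathbf{u}}_\epsilon,\mathbf{w}^\epsilon)$ converges, by Lemma \ref{lem:e2pu} (extended to $\mathbf{x}$-dependent test functions by linearity), to the $\mathbf{y}$-gradient term $2\mu\int_{\Omega\times Y_f}\frac{\partial u_r^i}{\partial y_j}\overline{\frac{\partial w_r^i}{\partial y_j}}$, the $\eta$-part dropping out because $\mathrm{div}_\mathbf{y}\mathbf{w_r}=0$ and the surviving divergence terms carry a factor $\epsilon^2$; the interface term $\epsilon\int_{\Gamma_\epsilon}\alpha\llbracket\hat{\mathbf{u}}_\epsilon\rrbracket_s^f\cdot\overline{\llbracket\mathbf{w}^\epsilon\rrbracket_s^f}$ converges to $\alpha\int_\Omega\int_\Gamma\mathbf{u_r}\cdot\overline{\mathbf{w_r}}$ by Lemma \ref{l:twoscaleconvinterf}; and the left-hand side converges to $\frac1\lambda\int_\Omega\hat f^i\,\overline{(w^i+\langle w_r^i\rangle)}$.

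Next I would substitute \eqref{u1-formula} into the solid part: its $\nabla\mathbf{u}$-contribution assembles into $\frac1\lambda\int_\Omega q_{ijlk}\frac{\partial u^k}{\partial x_l}\overline{\frac{\partial w^i}{\partial x_j}}$ ($q_{ijlk}$ being the homogenized elasticity tensor built from the cell correctors $\boldsymbol{\chi_i^j}$ of \eqref{a-loc-chi-ij}), while its $p_0$-contribution equals $\frac1\lambda\int_\Omega p_0\,\beta_{ij}\overline{\frac{\partial w^i}{\partial x_j}}$ after the identity $q(\boldsymbol{\chi_i^j},\boldsymbol{\chi})=-\beta_{ij}$, which follows from \eqref{a-loc-chi}--\eqref{a-loc-chi-ij} and the Hermitian symmetry of $q$. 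Adding this to the fluid part of the limit gives $\frac1\lambda\int_\Omega p_0\,\overline{\left(\beta_{ij}\frac{\partial w^i}{\partial x_j}-\Pi\,\mathrm{div}\,\mathbf{w}-\mathrm{div}\langle\mathbf{w_r}\rangle\right)}$, and inserting $p_0=\delta\left(\beta_{kl}\frac{\partial u^k}{\partial x_l}-\Pi\,\mathrm{div}\,\mathbf{u}-\mathrm{div}\langle\mathbf{u_r}\rangle\right)$ from \eqref{p0uuep} reproduces precisely the last line of \eqref{maceq-conn-fin}. Collecting all limits yields \eqref{maceq-conn-fin} for the dense class of test functions above; since every term of $F$ is bounded by Cauchy--Schwarz and the trace theorem, the identity extends by density to all $\mathbf{w}\in\mathbf{H_0^1}(\Omega)$ and all $\mathbf{w_r}\in\mathbf{H}(\Omega;W)$.

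For unique solvability I would apply the Lax--Milgram lemma (Lemma \ref{laxmil}) to the form $B\big((\mathbf{u},\mathbf{u_r}),(\mathbf{w},\mathbf{w_r})\big):=F(\mathbf{u}+\mathbf{u_r},\mathbf{w}+\mathbf{w_r})$ on $\mathbf{H_0^1}(\Omega)\times\mathbf{H}(\Omega;W)$. Since $\lambda>\lambda_0>r>1$ is real and positive, $F(\mathbf{v},\mathbf{v})$ with $\mathbf{v}=\mathbf{u}+\mathbf{u_r}$ is a sum of nonnegative terms, the last of which is literally $\frac\delta\lambda\,\norm{\beta_{kl}\frac{\partial u^k}{\partial x_l}-\Pi\,\mathrm{div}\,\mathbf{u}-\mathrm{div}\langle\mathbf{u_r}\rangle}^2_{L^2(\Omega)}\geq0$, so it cannot spoil coercivity. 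Dropping the first term and combining $\lambda(1-\Pi)\rho^s\norm{\mathbf{u}}^2_{L^2(\Omega)}$ with the $V$-ellipticity of $q_{ijkl}$ (a standard consequence of \eqref{a-coercive} and the cell problem) and Korn's and Poincar\'e's inequalities on $\mathbf{H_0^1}(\Omega)$ controls $\norm{\mathbf{u}}_{\mathbf{H_0^1}(\Omega)}$; the $2\mu$- and $\alpha$-terms, together with the Korn--Poincar\'e inequality on $W$, $\norm{\nabla_\mathbf{y}\mathbf{w}}^2_{L^2(Y_f)}+\norm{\llbracket\mathbf{w}\rrbracket_s^f}^2_{L^2(\Gamma)}\geq c\,\norm{\mathbf{w}}^2_{H^1(Y_f)}$ for $\mathbf{w}\in W$ (proved by the same contradiction/compactness argument as Lemma \ref{rem:bsp}), control $\norm{\mathbf{u_r}}_{\mathbf{L^2}(\Omega;W)}$; and the triangle inequality applied to the last term, combined with the control already obtained on $\norm{\mathbf{u}}_{\mathbf{H_0^1}(\Omega)}$, controls $\norm{\mathrm{div}\langle\mathbf{u_r}\rangle}_{L^2(\Omega)}$, so that $F(\mathbf{v},\mathbf{v})\geq c(\lambda)\big(\norm{\mathbf{u}}^2_{\mathbf{H_0^1}(\Omega)}+\norm{\mathbf{u_r}}^2_{\mathbf{H}(\Omega;W)}\big)$. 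With the boundedness of $B$ and of the source functional, Lax--Milgram gives a unique solution of \eqref{maceq-conn-fin}; since every subsequential two-scale limit $\mathbf{w_0}=\mathbf{u}+\mathbf{u_r}$ solves it, the whole sequence $\{\hat{\mathbf{u}}_\epsilon\}$ two-scale converges to this solution. I expect the coercivity step to be the main obstacle --- in particular the Korn--Poincar\'e inequality on $W$ and the use of the $\alpha$-interface term to recover the full $\mathbf{H}(\Omega;W)$-norm of $\mathbf{u_r}$ --- with the secondary difficulty being the purely algebraic bookkeeping of the $\mathbf{u_1}$/$p_0$ substitution, which must reproduce the exact coefficients $q_{ijlk}$, $\beta_{ij}$, $\delta$ and $\Pi$ in \eqref{maceq-conn-fin}.
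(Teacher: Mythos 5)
Your overall route coincides with the paper's: the paper derives the macroscopic equation \eqref{maceq} by testing \eqref{LVF} with $\mathbf{w}\in\mathscr{D}(\Omega)$ (so the interface term drops), then integrates the already-derived local problem \eqref{locprob-ur} for $\mathbf{u_r}$ against $\mathbf{w_r}\in\mathscr{D}(\Omega;W)$ after inserting \eqref{p0uuep}, and adds the two identities; you do the same thing in a single pass with the combined test function $\mathbf{w}+\mathbf{w_r}(\mathbf{x},\mathbf{x}/\epsilon)$, using the same limit lemmas (Lemmas~\ref{lem:2scalecvuep}, \ref{lem:due2scsol}, \ref{lem:pe2sv}, \ref{lem:e2pu}, \ref{l:twoscaleconvinterf}) and the same elimination of $\mathbf{u_1}$ and $p_0$ through \eqref{u1-formula} and \eqref{p0uuep}. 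Your Lax--Milgram/coercivity sketch supplies details the paper only asserts, and it is sound for real $\lambda>0$.

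There is, however, one genuine gap: the test class $\boldsymbol{\psi}_m\in C_p^\infty\cap W$. A globally smooth $Y$-periodic field that vanishes on $Y_s$ vanishes on $\overline{Y_s}$ by continuity, hence has zero trace on $\Gamma$ from the fluid side. Two consequences. First, for every such test function the interface term of \eqref{LVF} is identically zero for each $\epsilon$ (the jump of $\mathbf{w}+\mathbf{w_r}^\epsilon$ across $\Gamma_\epsilon$ vanishes), so your limiting identity never actually produces the term $\alpha\int_\Omega\int_\Gamma\mathbf{u_r}\cdot\overline{\mathbf{w_r}}\,d\sigma(\mathbf{y})\,d\mathbf{x}$ with nontrivial interface data in $\mathbf{w_r}$. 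Second, this class is not dense in $\mathbf{H}(\Omega;W)$: the $W$ (i.e. $V_Y$) part of the norm controls $H^1(Y_f)$ and hence the trace on $\Gamma$, so limits of zero-trace functions still have zero trace, and your final ``extension by density'' to all $\mathbf{w_r}\in\mathbf{H}(\Omega;W)$ fails precisely for the $\alpha$-term. The repair is the paper's own choice of test functions: take $\mathbf{w_r}\in\mathscr{D}(\Omega;W)$, i.e. smooth and compactly supported in $\mathbf{x}$ with values in the full space $W$, whose elements may have a nonzero fluid-side trace (a jump across $\Gamma$); equivalently, take your $\boldsymbol{\psi}_m\in W$, or smooth up to $\Gamma$ from inside $Y_f$ and extended by zero. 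All the lemmas you invoke tolerate this lower regularity in $\mathbf{y}$ (Lemma~\ref{lem:2scalecvuep} needs only $\psi\in L^2_p$, Lemma~\ref{lem:e2pu} only $\mathbf{w}\in\mathbf{H^1_p}(Y_f)$, and Lemma~\ref{l:twoscaleconvinterf} is used in the paper exactly with $\mathbf{w}\in W$), and the density of $\mathscr{D}(\Omega;W)$ in $\mathbf{H}(\Omega;W)$ is then the density statement actually needed.
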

\begin{proof}
We take $\mathbf{w}\in\mathscr{D}(\Omega)$ in (\ref{LVF}), and concentrate on passing to the limit as $\epsilon\downarrow0$, using Lemmas~\ref{2scaleth},~\ref{lem:2scalecvuep} and \ref{lem:due2scsol}.  Observe that, if $\mathbf{w}\in\mathscr{D}(\Omega)$, the interface term drops automatically because $\mathbf{w}_f=\mathbf{w}_s$ on $\Gamma_\epsilon$, obtaining:
\begin{align}
	\label{macprob-ur}
\int_{\Omega}\hat{f}^i\overline{w^i}\,d\mathbf{x}	&=\lambda^2\int_{\Omega}\left(\langle\rho\rangle u^i+\rho^f\langle u_r\rangle^i\right)\overline{w^i}\,d\mathbf{x}-|Y_f|\int_{\Omega}p_0\,\overline{{\rm div}\,\mathbf{w}}\,d\mathbf{x}\notag\\
	&\quad+\int_{\Omega\times Y_s}a_{ijkl}\left(\frac{\partial u^k}{\partial x_l}+\frac{\partial u_1^k}{\partial y_l}\right)\overline{\frac{\partial w^i}{\partial x_j}}\,d\mathbf{x}\,d\mathbf{y},
\end{align} 
for all $\mathbf{w}\in\mathscr{D}(\Omega)$, where we recall the notation that is already defined in \eqref{uowo}: 
\begin{equation*}
\langle\mathbf{v}\rangle(\mathbf{x})=\int_{Y}\mathbf{v}(\mathbf{x},\mathbf{y})\,d\mathbf{y}\hspace{3mm}\text{ for }\mathbf{v}\in\mathbf{L^2}(\Omega,\mathbf{L_p^2}).
\end{equation*}
In order to replace the $\mathbf{u}_1$ term with the zero-order terms $\mathbf{u}_0$ and $p_0$, we use the solutions of the cell problems for $\mathbf{u}_1$  \eqref{u1-formula}-\eqref{a-loc-chi-ij} to define the following auxiliary variables. 
For $1\leq i,j\leq3$, let $\mathbf{p_i^j}:=y_j\delta_{ik}$,  $k=1,2,3,$ and introduce:
\begin{align*}
	q_{ijkl}:=q\left(\boldsymbol{\chi_i^j}-\mathbf{p_i^j},\boldsymbol{\chi_k^l}-\mathbf{p_k^l}\right),
\end{align*}
where $q(\cdot,\cdot)$ (respectively, $\boldsymbol{\chi_i^j}$) is defined in (\ref{a}) (respectively, (\ref{a-loc-chi-ij})).   Observe that the coefficients $q_{ijkl}$ are real and they satisfy:
\begin{align}
&q_{ijkl}=q_{jikl}=q_{ijlk}=q_{klij},\notag\\
&q_{ijkl}\xi_{kl}\xi_{ij}\geq c\,\xi_{ij}\xi_{ij}\hspace{3mm}(c>0)\hspace{3mm}\xi_{ij}=\xi_{ji}\hspace{2mm}(1\leq i,j\leq3).\label{ellipq}
\end{align}

A calculation shows that: $$\int_{Y_s}a_{ijlk}\left(\frac{\partial u^k}{\partial x_l}+\frac{\partial u_1^k}{\partial y_l}\right)\,d\mathbf{y}=q_{ijlk}\frac{\partial u^k}{\partial x_l}+\beta_{ij}p_0,\,\, i,j,k,l=1,2,3.
$$
We substitute the above equation into (\ref{macprob-ur}) and use the fact that $\mathscr{D}(\Omega)$ is dense in $\mathbf{H_0^1}(\Omega)$, to obtain the macroscopic equation:
\begin{align}
\label{maceq}
	\int_{\Omega}\hat{f}^i\overline{w^i}\,d\mathbf{x}&=\lambda^2\int_{\Omega}\left(\langle\rho\rangle u^i+\rho^f\langle u_r\rangle^i\right)\overline{w^i}\,d\mathbf{x}+\int_{\Omega}q_{ijlk}\frac{\partial u^k}{\partial x_l}\overline{\frac{\partial w^i}{\partial x_j}}\,d\mathbf{x}\notag\\
	&\quad+\delta\int_{\Omega}\left(\beta_{kl}\frac{\partial u^k}{\partial x_l}-\Pi{\rm div}\mathbf{u}-{\rm div}\langle\mathbf{u_r}\rangle\right)\left(\beta_{ij}\overline{\frac{\partial w^i}{\partial x_j}}-\Pi\overline{{\rm div}\mathbf{w}}\right)\,d\mathbf{x},
\end{align}
for all $\mathbf{w}\in\mathbf{H_0^1}(\Omega)$. 

{To close the system, we substitute} (\ref{p0uuep}) into (\ref{locprob-ur}) and test it with  functions of the form $\mathbf{w}=\mathbf{w_r}(\mathbf{x},\cdot)$ for fixed $\mathbf{x}$, $\mathbf{w_r}\in\mathscr{D}(\Omega;W)$ (W defined in (\ref{spaceW})), followed by integrating over $\Omega$ to obtain:
\begin{align}
	\label{macprob-connec}
	\int_{\Omega}\hat{f}^i\overline{\langle\mathbf{w_r}\rangle^i}\,d\mathbf{x}&=\lambda^2\rho^f\int_{\Omega\times Y_f}\left(u^i_r+u^i\right)\overline{w^i_r}\,d\mathbf{x}\,d\mathbf{y}+2\lambda\mu\int_{\Omega\times Y_f}\frac{\partial u_r^i}{\partial y_j}\overline{\frac{\partial w_r^i}{\partial y_j}}\,d\mathbf{x}\,d\mathbf{y}\notag\\
	&\quad+\alpha\lambda\int_\Omega\int_\Gamma\mathbf{u_r}(\mathbf{x},\mathbf{y})\overline{\mathbf{w_r}(\mathbf{x},\mathbf{y})}d\sigma(\mathbf{y})d\mathbf{x}\notag\\
	&\quad+\delta\int_{\Omega}\left(\beta_{kl}\frac{\partial u^k}{\partial x_l}-\Pi{\rm div}\mathbf{u}-{\rm div}\langle\mathbf{u_r}\rangle\right)\overline{\left(-{\rm div}\langle\mathbf{w_r}\rangle\right)}\,d\mathbf{x},
\end{align}
for all $\mathbf{w_r}\in\mathscr{D}(\Omega;W)$.  The space $\mathbf{H}$ is chosen because $\mathbf{u_r}\in L^2(\Omega,W)$ and $\langle \mathbf{u_r} \rangle=\mathbf{\hat{u}_0}-\mathbf{u}\in E_0(\Omega_\epsilon^s\cup\Omega_\epsilon^f)$, i.e. $\mathbf{u_r}\in \mathbf{H}(\Omega;W)$. Since 
$\mathscr{D}(\Omega;W)$ is dense in the space $\mathbf{H}(\Omega;W)$, we can replace $\mathscr{D}(\Omega;W)$ by the space $\mathbf{H}(\Omega;W)$ in (\ref{macprob-connec}).

Note that the first integral on the right-hand side of  (\ref{maceq}) can be written as: $$\left(1-\Pi\right)\rho^s\int_{\Omega}u^i\overline{w^i}\,d\mathbf{x}+\rho^f\int_{\Omega\times Y_f}(u^i+u^i_r)\overline{w^i}\,d\mathbf{x}\,d\mathbf{y}.$$ Combining (\ref{maceq}) with (\ref{macprob-connec}) and divide both side by $\lambda$ lead to problem \eqref{maceq-conn-fin}. It can be checked that $Re F(\mathbf{w+w_r}, \mathbf{w+w_r})$ is coercive and hence the existence and uniqueness of solution follow from the Lax-Milgram lemma.
\end{proof}

The time domain macroscopic equation can be obtained by applying inverse Laplace transform to the equation above.

\section{Conclusion}
\label{conclusion}
In this paper, we consider wave propagation in a poroelastic composite material. It generalizes the results obtained in \cite{Nguetseng1990} from no-slip condition on the solid-fluid interface to the case of a slip boundary condition given by the interface term (\ref{interfacecond}).  To handle this interface condition, various function spaces are defined in Section \ref{fun_spaces} to accommodate the discontinuity of $\mathbf{u}_\epsilon$ on the interface. 

The existence and uniqueness result presented in Section~\ref{EUProof} dealt with the interface term. 

Unlike \cite{Nguetseng1990}, this slip problem requires taking the two-scale convergence limit for a surface integral. The results from \cite{Allaire95} (presented in Section~\ref{2scaleboundsec}) generalize the definition of two-scale convergence to surfaces and are fundamental in the limiting process of the the interface term. We can use these results since we are able to obtain (\ref{unifbound}) and (\ref{unifboundgamma}). An important part of our analysis is to establish the relation between the two-scale limits of the functions and the two-scale limits of their traces. Another difference between our results and those in \cite{Nguetseng1990} is that we need to add the norm of the interface jump term to the $V$-norm so results like Lemma \ref{1-coercive} can hold. 

The interface term does not show up in the local problem for $\mathbf{u^1}$, see (\ref{locprou1-2}), and we obtained similar results to \cite{Nguetseng1990}.  However, the interface term is in the local problem for $\mathbf{u_r}$, which is obtained in Section~\ref{sec:probur}. Note that in \ref{equrproblemeps} the boundary term doesn't disappear, and the technical Lemma~\ref{l:twoscaleconvinterf} is necessary for dealing with this term and to finally obtain (\ref{locprob-ur}).  

Unlike the results in \cite{Nguetseng1990}, where the macroscopic equation in the case of inclusions has simpler form than the case of connected geometry, the macroscopic equations in the slip case are indifferent to whether the pore space is connected or not.  
 
The homogenized equations \eqref{maceq-conn-fin} are posed in six dimension space. Since $\mathbf{u_r}$ in \eqref{cell_ur} is linearly proportional to the force term: $$F^i(\mathbf{x}):=\left(\hat{f}^i(\mathbf{x})-\lambda^2\rho^fu^i(\mathbf{x})-\frac{\partial p_0}{\partial x_i}(\mathbf{x})\right),$$ we could have defined the auxiliary matrix-valued variable ${\boldsymbol{\theta}}$,  such that:
\begin{equation*}
u_r^i(\mathbf{x},\mathbf{y})=\theta_{ip}(\mathbf{y})F^p(\mathbf{x}).
\end{equation*} 
By substituting this expression into \eqref{locprob-ur}, the following equations for  $\boldsymbol{\theta}(\mathbf{y})$ can be easily obtained:
\begin{align}
\label{aux_ur}
&\lambda^2 \rho_f \int_{Y_f} \theta_{ip} \overline{w^i} d\mathbf{y}+2\lambda\mu \int_{Y_f}\frac{\partial \theta_{ip}}{\partial y_j}\overline{\frac{\partial w^i}{\partial y_j} } d\mathbf{y}+\lambda \alpha \int_\Gamma \theta_{ip} \overline{w^i} d\sigma(\mathbf{y})\\
\notag
&\quad=\int_{Y_f} \overline{\mathbf{w}}\cdot \mathbf{e}_pd\mathbf{y},\, p=1,2,3, \hspace{3mm} \forall\mathbf{w}\in W.   
\end{align}
This cell problem can be solved first and then the homogenized equation will be only for $\mathbf{u}(\mathbf{x})$ and hence a problem in three dimensions, instead of six. However, unlike the auxiliary variables introduced for $\mathbf{u_1}$, whose governing equations \eqref{a-loc-chi} and \eqref{a-loc-chi-ij}, $\boldsymbol{\theta}$ are independent of  $\lambda$, equation \eqref{aux_ur} depends on $\lambda$. This means that the corresponding three dimensional macroscopic equation  problem in the time-domain will contain memory terms with the inverse Laplace transform of $\boldsymbol{\theta}$ being the kernel function. Finally, we remark that as a result of the slip interface condition, the cell problem for $\mathbf{u_r}$ in \eqref{cell_ur} has the form of a generalized Darcy's law but with an additional term of $2\lambda\mu \frac{\partial e_{ij}(\mathbf{u_r})}{\partial y_j}$ . The consequence of this term on the permeability will be studied in the future work.
\section*{Acknowledgements}
The work of MYO was partially sponsored by NSF grants DMS-1413039 and DMS-1821857. The work of the SJB was partially supported by NSF grant DMS-2110036.  

\bibliographystyle{plain}

\appendix
\section{Useful lemmas used in the paper}
\label{useful_lemmas}
\numberwithin{equation}{section}
\numberwithin{theorem}{section}
\begin{lemma}
\label{laxmil}
Lax-Milgram~lemma (Theorem~5.1, page~18 of \cite{SanchezPalencia1980}).  If $a(u,v)$ is a sesquilinear form on $V$ such that: 
\begin{itemize}
	\item $a(\lambda u,\mu v)=\lambda\, \bar{\mu} \,a(u,v)$,
	\item $|a(u,v)|\leq M\norm{u}_V\norm{v}_V$,
\end{itemize}
and if there exists $C>0$ such that $|a(u,u)|\geq C\norm{u}_V^2$ for all $u\in V$, then, for every $f\in V'$ (the dual space of $V$), there exists a unique $u\in V$ such that $a(u,v)=[f,v]$ for all $v\in V$, where $[\cdot,\cdot]$ represents the dual pairing between $V$ and $V'$.
\end{lemma}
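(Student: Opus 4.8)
The plan is to reduce the variational equation $a(u,v)=[f,v]$ to a single operator equation on $V$ and then solve it with the Riesz representation theorem together with a closed-range argument. First I would fix $u\in V$ and note that, by sesquilinearity, $v\mapsto a(u,v)$ is conjugate-linear, so $v\mapsto\overline{a(u,v)}$ is a bounded linear functional on $V$; by the Riesz representation theorem there is a unique $Au\in V$ with $a(u,v)=(Au,v)_V$ for all $v\in V$, and the boundedness hypothesis gives $\norm{Au}_V\le M\norm{u}_V$. Linearity of $a$ in its first slot forces $A\colon V\to V$ to be a bounded linear operator. Likewise $f\in V'$ has a Riesz representative $\phi\in V$ with $[f,v]=(\phi,v)_V$, so the problem "find $u\in V$ with $a(u,v)=[f,v]$ for all $v\in V$" is equivalent to the single equation $Au=\phi$.

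Next I would extract everything I need from coercivity. For every $u\in V$,
\[
C\norm{u}_V^2\le |a(u,u)|=|(Au,u)_V|\le\norm{Au}_V\,\norm{u}_V,
\]
hence $\norm{Au}_V\ge C\norm{u}_V$. This shows at once that $A$ is injective, and also that its range $R(A)$ is closed: if $Au_n\to g$ in $V$, then $\norm{u_n-u_m}_V\le C^{-1}\norm{Au_n-Au_m}_V$ shows $\{u_n\}$ is Cauchy, so $u_n\to u$ and $g=Au\in R(A)$ by continuity of $A$.

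Then I would show $R(A)=V$ by checking $R(A)^\perp=\{0\}$: if $w\in V$ satisfies $(Au,w)_V=0$ for all $u\in V$, then taking $u=w$ gives $(Aw,w)_V=0$, whence $C\norm{w}_V^2\le|a(w,w)|=|(Aw,w)_V|=0$ and $w=0$. Since $R(A)$ is closed and has trivial orthogonal complement, $R(A)=V$, so $A$ is bijective; in particular $Au=\phi$ has the unique solution $u=A^{-1}\phi$, which is precisely the unique $u\in V$ with $a(u,v)=[f,v]$ for all $v\in V$. There is no serious obstacle here; the only point requiring care is the bookkeeping of the linear/conjugate-linear conventions in the Riesz step, together with the observation that the hypothesis is stated as $|a(u,u)|\ge C\norm{u}_V^2$ rather than $\operatorname{Re}a(u,u)\ge C\norm{u}_V^2$ — this causes no difficulty, since every use of coercivity above needs only the modulus bound $|(Au,u)_V|\ge C\norm{u}_V^2$.
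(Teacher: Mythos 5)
Your proof is correct. Note that the paper does not actually prove this lemma: it is quoted verbatim from Theorem~5.1, page~18 of S\'anchez-Palencia's book and used as a black box, so there is no in-paper argument to compare against. Your argument is the standard operator-theoretic proof of Lax--Milgram: Riesz representation to define a bounded linear $A$ with $a(u,v)=(Au,v)_V$, the lower bound $\norm{Au}_V\geq C\norm{u}_V$ from coercivity giving injectivity and closed range, and the orthogonal-complement computation giving surjectivity. You also correctly observe that the hypothesis here is the modulus form $|a(u,u)|\geq C\norm{u}_V^2$ rather than a bound on $\operatorname{Re}a(u,u)$, and that every step of the argument only ever uses the modulus, so nothing breaks. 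The only point worth flagging is bookkeeping you already acknowledge: with the stated convention $a(\lambda u,\mu v)=\lambda\bar{\mu}\,a(u,v)$, the functional $[f,\cdot]$ must be interpreted as conjugate-linear (i.e.\ $V'$ is the antidual) for the equation $a(u,v)=[f,v]$ to be consistent, and your Riesz step for $f$ implicitly assumes this; that is indeed how the lemma is applied in the body of the paper (e.g.\ in \eqref{LVF2}), so there is no gap.
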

\begin{lemma}[Lemma~1.1, page~87 of \cite{SanchezPalencia1980}]
\label{lem:Korn}\textit{Korn's inequality.}  Given a bounded set $\Upsilon$ with $\partial\Upsilon$ smooth, there exists $\gamma'>0$, such that the following estimate holds: 
\begin{equation*}
\int_\Upsilon E_{ij}(\mathbf{w})\overline{E_{ij}(\mathbf{w})}d\mathbf{x}+\int_{\Upsilon}w^i\overline{w^id}\mathbf{x}\geq \gamma'\norm{\mathbf{w}}_{H^1(\Upsilon)}^2,
\end{equation*}
for all $\mathbf{w}\in H^1(\Upsilon)$.
\end{lemma}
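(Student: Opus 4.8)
The plan is to obtain Korn's (second) inequality from the Ne\v{c}as--Lions lemma together with an elementary algebraic identity expressing the second derivatives of $\mathbf{w}$ in terms of the first derivatives of the symmetric gradient $\mathbf{E}(\mathbf{w})$. Write $H^{-1}(\Upsilon)$ for the dual of $H_0^1(\Upsilon)$; since differentiation maps $L^2(\Upsilon)$ boundedly into $H^{-1}(\Upsilon)$, one has $\norm{\partial_j g}_{H^{-1}(\Upsilon)}\le\norm{g}_{L^2(\Upsilon)}$ for every $g\in L^2(\Upsilon)$. The single nontrivial ingredient, and the step I expect to be the real obstacle, is the Ne\v{c}as--Lions lemma: because $\Upsilon$ is bounded with smooth (hence Lipschitz) boundary, there is $C_0=C_0(\Upsilon)>0$ such that any scalar distribution $v$ on $\Upsilon$ with $v\in H^{-1}(\Upsilon)$ and $\nabla v\in (H^{-1}(\Upsilon))^3$ in fact lies in $L^2(\Upsilon)$, with
\begin{equation*}
\norm{v}_{L^2(\Upsilon)}\le C_0\bigl(\norm{v}_{H^{-1}(\Upsilon)}+\norm{\nabla v}_{(H^{-1}(\Upsilon))^3}\bigr).
\end{equation*}
I would invoke this as a known fact (it is classical; one proves it by localizing with a partition of unity subordinate to boundary charts and applying a Fourier-multiplier estimate on $\mathbb{R}^3$, or, equivalently, via the solvability of the divergence equation with an $H_0^1$ bound). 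All the analytic substance of Korn's inequality is concentrated here; the remaining steps are purely formal.

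Granting this, the rest proceeds as follows. For $\mathbf{w}\in H^1(\Upsilon)$ and indices $1\le j,k,l\le 3$, the identity
\begin{equation*}
\partial_k\partial_l w^j=\partial_k E_{jl}(\mathbf{w})+\partial_l E_{jk}(\mathbf{w})-\partial_j E_{kl}(\mathbf{w})
\end{equation*}
holds in $\mathscr{D}'(\Upsilon)$, as one checks at once by expanding $E_{ij}(\mathbf{w})=\tfrac12(\partial_j w^i+\partial_i w^j)$; hence each second derivative $\partial_k\partial_l w^j$ is a sum of first derivatives of $L^2(\Upsilon)$-functions and so lies in $H^{-1}(\Upsilon)$ with $\norm{\partial_k\partial_l w^j}_{H^{-1}(\Upsilon)}\le 3\norm{\mathbf{E}(\mathbf{w})}_{L^2(\Upsilon)}$. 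Applying the Ne\v{c}as--Lions lemma to $v=\partial_l w^j$ for each fixed $j,l$, the first term on its right-hand side is bounded by $\norm{\partial_l w^j}_{H^{-1}(\Upsilon)}\le\norm{w^j}_{L^2(\Upsilon)}$, and the second, $\norm{\nabla(\partial_l w^j)}_{(H^{-1}(\Upsilon))^3}=\bigl(\sum_k\norm{\partial_k\partial_l w^j}_{H^{-1}(\Upsilon)}^2\bigr)^{1/2}$, by a multiple of $\norm{\mathbf{E}(\mathbf{w})}_{L^2(\Upsilon)}$. Squaring and summing over $j$ and $l$ gives $\norm{\nabla\mathbf{w}}_{L^2(\Upsilon)}^2\le C\bigl(\norm{\mathbf{w}}_{L^2(\Upsilon)}^2+\norm{\mathbf{E}(\mathbf{w})}_{L^2(\Upsilon)}^2\bigr)$, and adding $\norm{\mathbf{w}}_{L^2(\Upsilon)}^2$ to both sides yields $\norm{\mathbf{w}}_{H^1(\Upsilon)}^2\le(C+1)\bigl(\int_\Upsilon E_{ij}(\mathbf{w})\overline{E_{ij}(\mathbf{w})}\,d\mathbf{x}+\int_\Upsilon w^i\overline{w^i}\,d\mathbf{x}\bigr)$, i.e. the claimed estimate with $\gamma'=(C+1)^{-1}$. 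The complex-valued case reduces to the real one by splitting $\mathbf{w}$ into real and imaginary parts, since $E_{ij}(\mathbf{w})\overline{E_{ij}(\mathbf{w})}=|E_{ij}(\mathrm{Re}\,\mathbf{w})|^2+|E_{ij}(\mathrm{Im}\,\mathbf{w})|^2$ pointwise.

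As an alternative I could argue by compactness, exactly in the style of Lemma~\ref{rem:bsp}: if the inequality failed there would be $\mathbf{w}^k\in H^1(\Upsilon)$ with $\norm{\mathbf{w}^k}_{H^1(\Upsilon)}=1$ and $\int_\Upsilon|\mathbf{E}(\mathbf{w}^k)|^2+\int_\Upsilon|\mathbf{w}^k|^2\to0$; the Rellich theorem furnishes a subsequence converging weakly in $H^1(\Upsilon)$ and strongly in $L^2(\Upsilon)$ to some $\mathbf{w}^*$, which must then vanish, whereas a contradiction with $\norm{\mathbf{w}^k}_{H^1(\Upsilon)}=1$ requires upgrading the convergence to strong $H^1(\Upsilon)$ convergence. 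That upgrade, however, again rests precisely on the Ne\v{c}as--Lions lemma, so the two routes carry the same essential content; I would therefore favour the direct argument above for its transparency.
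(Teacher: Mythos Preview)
The paper does not prove this lemma at all: it is listed in the appendix of auxiliary results and simply cited from \cite{SanchezPalencia1980}. Your proof via the Ne\v{c}as--Lions lemma and the second-derivative identity $\partial_k\partial_l w^j=\partial_k E_{jl}(\mathbf{w})+\partial_l E_{jk}(\mathbf{w})-\partial_j E_{kl}(\mathbf{w})$ is correct and is one of the standard modern routes to Korn's second inequality; there is nothing to compare it against in the present paper.

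One small remark on your closing paragraph: the compactness alternative, as you sketch it, does not quite close without an extra ingredient. From $\norm{\mathbf{w}^k}_{H^1}=1$, $\mathbf{w}^k\rightharpoonup\mathbf{w}^*$ in $H^1$, and $\mathbf{w}^k\to0$ in $L^2$ you get $\mathbf{w}^*=0$, but weak $H^1$-convergence to $0$ does not contradict $\norm{\mathbf{w}^k}_{H^1}=1$. To upgrade to strong $H^1$-convergence you would need to already know that $\norm{\nabla\mathbf{w}^k}_{L^2}$ is controlled by $\norm{\mathbf{E}(\mathbf{w}^k)}_{L^2}+\norm{\mathbf{w}^k}_{L^2}$, which is the very inequality at stake. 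You are right that this circularity is broken precisely by Ne\v{c}as--Lions (or an equivalent device), so your decision to present the direct argument is the sound one.
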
  
\section{Useful convergence results.}
\label{CVResult}
\numberwithin{equation}{section}
\numberwithin{theorem}{section}
We list here  the various convergence theorems that are applied throughout the paper. The proofs can be found in \cite{Nguetseng1989}) and \cite{Nguetseng1990}. 
\begin{theorem}\label{2scaleth}
	Let $\mathbf{v_\epsilon}\in L^2(\Omega)$ ($\Omega$ is any bounded open set in $\mathbb{R}^3$) such that: $$\norm{\mathbf{v_\epsilon}}_{L^2(\Omega)}\leq C \hspace{2mm} \text{ for all $\epsilon.$}$$  Then, up to a subsequence (still denoted by $\epsilon$), as $\epsilon\downarrow0$, the following holds: $$\int_\Omega\mathbf{v_\epsilon}\mathbf{w^\epsilon}\phi d\mathbf{x}\rightarrow\int_{\Omega\times Y}\mathbf{v_o}(\mathbf{x},\mathbf{y})\mathbf{w}(\mathbf{y})\phi(\mathbf{x})d\mathbf{x}d\mathbf{y},$$ for all  $\mathbf{w}\in L^2_p$, for all $\phi\in \mathscr{K}(\overline{\Omega})$, where $\mathbf{v_o}\in L^2(\Omega;L^2_p)$.
	
	If, furthermore, $\mathbf{v_\epsilon}\in H^1(\Omega)$ and there exists a constant $C>0$, independent of $\epsilon$, such that:
\begin{equation*}
	\norm{\mathbf{v_\epsilon}}_{L^2(\Omega)}\leq C \hspace{2mm}\text{ for all $\epsilon$,}
\end{equation*}
\begin{equation*}
	\sum_{i=1}^3\int_{\Omega_\epsilon^o}\left|\frac{\partial\mathbf{v_\epsilon}}{\partial x_i}\right|^2d\mathbf{x}\leq C \hspace{2mm}\text{ for all $\epsilon$.}
\end{equation*}
	Then, we can extract a subsequence  (still denoted by $\epsilon$) such that, for all $\mathbf{w}\in L^2_p$ and $\phi\in\mathscr{K}(\overline{\Omega})$, as $\epsilon\downarrow0$, we have: 
\begin{align*}
\mathbf{v_\epsilon}&\rightarrow\mathbf{\langle{v}_o\rangle} \hspace{2mm} \text{in $L^2(\Omega)$-weak,}\\
\int_{\Omega}\mathbf{v_\epsilon}\mathbf{w^\epsilon}\phi d\mathbf{x}&\rightarrow\int_{\Omega\times Y}\mathbf{v_o}(\mathbf{x},\mathbf{y})\mathbf{w}(\mathbf{y})\phi(\mathbf{x})d\mathbf{x}d\mathbf{y},\\
\int_{\Omega_\epsilon^o}\frac{\partial\mathbf{v_\epsilon}}{\partial x_i}\mathbf{w^\epsilon}\phi d\mathbf{x}&\rightarrow\int_{\Omega\times Y_o}\left(\frac{\partial \mathbf{u}}{\partial x_i}(\mathbf{x})+\frac{\partial \mathbf{u_1}}{\partial y_i}(\mathbf{x},\mathbf{y})\right)\mathbf{w}(\mathbf{y})\phi(\mathbf{x})d\mathbf{x}d\mathbf{y},\hspace{0.3cm} i=1,2,3,
\end{align*}
 where $\mathbf{v_o}\in L^2(\Omega,L^2_p)$ is given by: $$\mathbf{v_o}(\mathbf{x},\mathbf{y})=\mathbf{u}(\mathbf{x})+\mathbf{u_r}(\mathbf{x},\mathbf{y})$$ with $\mathbf{u}\in H^1(\Omega)$, $\mathbf{u_r}(\mathbf{x},\mathbf{y})=0$ almost everywhere in $Y_o$, for almost all $\mathbf{x}\in\Omega$; $\mathbf{u_1}\in L^2(\Omega;H_p^1(Y_o)/\mathbb{C})$,  and $\langle\mathbf{v}_o\rangle(\mathbf{x}):=\displaystyle\int_Y\mathbf{v_o}(\mathbf{x},\mathbf{y})d\mathbf{y}$, the mean value of $\mathbf{v_o}(\mathbf{x},\cdot)$.  Moreover, if $\mathbf{v_\epsilon}\in H^1_0(\Omega)$, then $\mathbf{u}\in H^1_0(\Omega)$.
\end{theorem}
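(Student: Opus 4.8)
The plan is to prove the statement in three stages. The first assertion is the basic two-scale compactness lemma, which I would obtain by a duality argument. For $\mathbf{w}\in C_p$ and $\phi\in C(\overline{\Omega})$ with compact support, the Cauchy--Schwarz inequality together with the mean-value lemma for periodic functions ($\|\mathbf{w}^\epsilon\phi\|_{L^2(\Omega)}^2\to\|\mathbf{w}\|_{L^2_p}^2\|\phi\|_{L^2(\Omega)}^2$) gives $\big|\int_\Omega\mathbf{v_\epsilon}\,\mathbf{w}^\epsilon\phi\,d\mathbf{x}\big|\le C\big(\|\mathbf{w}\phi\|_{L^2(\Omega\times Y)}+o(1)\big)$, the bound being uniform in $\epsilon$. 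Since $C(\overline{\Omega})\otimes C_p$ is separable and dense in $L^2(\Omega\times Y)$, a diagonal extraction (Banach--Alaoglu, metrizability of the dual ball on a separable space) followed by the Riesz representation theorem would produce a subsequence and a function $\mathbf{v_o}\in L^2(\Omega;L^2_p)$ with $\int_\Omega\mathbf{v_\epsilon}\,\mathbf{w}^\epsilon\phi\,d\mathbf{x}\to\int_{\Omega\times Y}\mathbf{v_o}\,\mathbf{w}\,\phi\,d\mathbf{x}\,d\mathbf{y}$ for continuous $\mathbf{w},\phi$; a routine $3\epsilon$-approximation then extends this to all $\mathbf{w}\in L^2_p$ and $\phi\in\mathscr{K}(\overline{\Omega})$. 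Taking $\mathbf{w}\equiv1$ and using density of $\mathscr{K}(\overline{\Omega})$ in $L^2(\Omega)$ identifies the $L^2(\Omega)$-weak limit of $\mathbf{v_\epsilon}$ as $\langle\mathbf{v_o}\rangle$.

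For the second assertion I would first exploit the geometric hypotheses on $Y_o$ --- its periodic extension has $C^1$ boundary and lies locally on one side --- to invoke the classical periodic extension operator $P_\epsilon\colon H^1(\Omega_\epsilon^o)\to H^1(\Omega)$ (or to a fixed neighbourhood of $\Omega$) with $\|P_\epsilon\mathbf{g}\|_{L^2}\le C\|\mathbf{g}\|_{L^2(\Omega_\epsilon^o)}$ and $\|\nabla P_\epsilon\mathbf{g}\|_{L^2}\le C\|\nabla\mathbf{g}\|_{L^2(\Omega_\epsilon^o)}$, with $C$ independent of $\epsilon$; this is the same type of extension used in the main text. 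Applying it to $\mathbf{v_\epsilon}|_{\Omega_\epsilon^o}$, which is bounded in $H^1(\Omega_\epsilon^o)$ under the hypotheses, yields $\tilde{\mathbf{v}}_\epsilon:=P_\epsilon(\mathbf{v_\epsilon}|_{\Omega_\epsilon^o})$ bounded in $H^1(\Omega)$; after a further extraction $\tilde{\mathbf{v}}_\epsilon\rightharpoonup\mathbf{u}$ in $H^1(\Omega)$ and, by Rellich, $\tilde{\mathbf{v}}_\epsilon\to\mathbf{u}$ strongly in $L^2(\Omega)$. Strong $L^2$ convergence forces the two-scale limit of $\tilde{\mathbf{v}}_\epsilon$ to be $\mathbf{u}(\mathbf{x})$, independent of $\mathbf{y}$; since $\tilde{\mathbf{v}}_\epsilon=\mathbf{v_\epsilon}$ on $\Omega_\epsilon^o$, testing both two-scale limits against $\chi_{Y_o}(\mathbf{y})\mathbf{w}(\mathbf{y})\phi(\mathbf{x})$ shows $\mathbf{v_o}(\mathbf{x},\mathbf{y})=\mathbf{u}(\mathbf{x})$ for $\mathbf{y}\in Y_o$, and $\mathbf{u_r}:=\mathbf{v_o}-\mathbf{u}$ then provides the decomposition with $\mathbf{u_r}=0$ on $Y_o$. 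Choosing $P_\epsilon$ so as to preserve the homogeneous Dirichlet condition (extension by zero near $\partial\Omega$) gives $\mathbf{u}\in H^1_0(\Omega)$ when $\mathbf{v_\epsilon}\in H^1_0(\Omega)$, since $H^1_0(\Omega)$ is weakly closed.

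The last step identifies the two-scale limit of $\nabla\mathbf{v_\epsilon}$ on $\Omega_\epsilon^o$. The sequence $\nabla\tilde{\mathbf{v}}_\epsilon$ is bounded in $L^2(\Omega)$, so by the first part it two-scale converges (up to a subsequence) to some $\boldsymbol{\xi}\in L^2(\Omega;L^2_p)$, and it coincides with $\nabla\mathbf{v_\epsilon}$ on $\Omega_\epsilon^o$. Testing against $\psi(\mathbf{x})\boldsymbol{\varphi}(\mathbf{y})$ with $\psi\in\mathscr{D}(\Omega)$ and $\boldsymbol{\varphi}\in C_p^\infty$ satisfying ${\rm div}_\mathbf{y}\boldsymbol{\varphi}=0$, an integration by parts in $\mathbf{x}$ combined with $\tilde{\mathbf{v}}_\epsilon$ two-scale converging to $\mathbf{u}(\mathbf{x})$ gives $\int_{\Omega\times Y}(\boldsymbol{\xi}-\nabla_\mathbf{x}\mathbf{u})\,\boldsymbol{\varphi}\,\psi\,d\mathbf{x}\,d\mathbf{y}=0$; hence for a.e.\ $\mathbf{x}$ the field $\boldsymbol{\xi}(\mathbf{x},\cdot)-\nabla_\mathbf{x}\mathbf{u}(\mathbf{x})$ is $L^2_p$-orthogonal to all divergence-free periodic fields, so by the de Rham/Helmholtz decomposition it equals $\nabla_\mathbf{y}\mathbf{u_1}(\mathbf{x},\cdot)$ for a unique $\mathbf{u_1}(\mathbf{x},\cdot)\in H^1_p/\mathbb{C}$; a measurable-selection argument yields $\mathbf{u_1}\in L^2(\Omega;H^1_p/\mathbb{C})$, and restriction to $Y_o$ gives the stated convergence with $\mathbf{u_1}\in L^2(\Omega;H^1_p(Y_o)/\mathbb{C})$. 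I expect the main obstacles to be the construction of the $\epsilon$-uniform extension operator $P_\epsilon$ --- which is precisely what the regularity assumptions on $Y_o$ are there to guarantee --- and the de Rham-type orthogonal decomposition together with the measurable dependence of $\mathbf{u_1}$ on $\mathbf{x}$; the remaining steps are bookkeeping with the mean-value lemma and Cauchy--Schwarz.
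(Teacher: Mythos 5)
This theorem is stated in the paper's appendix as a known background result; the paper gives no proof of its own and explicitly defers to Nguetseng (1989, 1990). Your sketch reconstructs exactly the standard argument from those references: the duality/diagonal-extraction compactness proof for the first assertion (Cauchy--Schwarz plus the mean-value lemma to get the uniform bound, Riesz representation for $\mathbf{v_o}$, then a density argument to pass from continuous to $L^2_p$ oscillating factors), a uniform extension operator $P_\epsilon$ from $\Omega_\epsilon^o$ to recover an $H^1(\Omega)$-bounded sequence whose strong $L^2$ limit identifies $\mathbf{v_o}=\mathbf{u}$ on $Y_o$, and the de Rham orthogonality argument against $\mathbf{y}$-divergence-free test fields to write the two-scale limit of the gradient as $\nabla_\mathbf{x}\mathbf{u}+\nabla_\mathbf{y}\mathbf{u_1}$. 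This is correct in outline and is the same route the cited sources take (the extension operator you invoke is precisely the paper's Theorem~\ref{thm:ext1}/\ref{thm:ext2}, and the measurable dependence of $\mathbf{u_1}$ on $\mathbf{x}$ follows from solving a cell problem whose solution depends linearly and continuously on the data, so no genuine selection issue arises). The one point worth making explicit is the behaviour of $P_\epsilon$ in the boundary layer: the uniform extension results are stated either for functions vanishing on $\Sigma_\epsilon^{s}$ (extension into $H^1_0(\Omega_1)$) or for cells well inside $\Omega$, so the identification of $\mathbf{u}$ up to $\partial\Omega$ and the claim $\mathbf{u}\in H^1_0(\Omega)$ when $\mathbf{v_\epsilon}\in H^1_0(\Omega)$ need the $\Omega_1$-version of the extension rather than an ad hoc ``extension by zero near $\partial\Omega$''; with that substitution your argument closes.
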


\begin{remark}[see \cite{Nguetseng1989}]
\label{th:Nguetseng89}
	Assume that $\norm{\mathbf{v_\epsilon}}_{H^1(\Omega)}\leq C$, for all $\epsilon$.  Then, by extraction of a suitable subsequence, we have: 
	\begin{align*}
	   \mathbf{v_\epsilon}&\rightarrow\mathbf{u}\hspace{4mm}\text{in }\,H^1(\Omega)-weak,\\
	   \int_{\Omega}\frac{\partial \mathbf{v_\epsilon}}{\partial x_i}\mathbf{w^\epsilon}\phi\,d\mathbf{x}&\rightarrow\int_{\Omega\times Y}\left(\frac{\partial\mathbf{u}}{\partial x_i}(\mathbf{x})+\frac{\partial \mathbf{u_1}}{\partial y_j}(\mathbf{x},\mathbf{y})\right)\mathbf{w}(\mathbf{y})\phi(\mathbf{x})\,d\mathbf{x}\,d\mathbf{y},
	\end{align*} 
for all $\mathbf{w}\in \mathbf{L^2_p}$, for all $\phi\in\mathscr{K}(\overline{\Omega})$, where $\mathbf{u_1}\in \mathbf{L^2}(\Omega;\mathbf{H^1_p}(Y)/\mathbb{C})$. 
\end{remark}
These theorems motivate the following definition of two-scale convergence.
\begin{definition}
\label{2cale-conv}
A sequence $\{ v_\varepsilon \}_{\varepsilon>0}$ in $L^2(\Omega)$ is said to \emph{two-scale converge} to $v = v(\mathbf{x},\mathbf{y})$, with $v \in L^2 (\Omega \times Y)$, if and only if:
\begin{equation*}
    \lim_{\varepsilon \to 0} \int_\Omega v_\varepsilon(\mathbf{x}) \psi \left(\mathbf{x}, \frac{\mathbf{x}}{\varepsilon}\right)\, d\mathbf{x} 
    = \frac{1}{|Y|} \int_\Omega \int_Y v(\mathbf{x},\mathbf{y}) \psi(\mathbf{x},\textbf{y})\, d\mathbf{y}\, d\mathbf{x},
\end{equation*}
for any test function $\psi = \psi (\mathbf{x}, \mathbf{y})$, with $\psi \in \mathscr{D}(\Omega, C_p^\infty (Y))$,
see \cite{Nguetseng1989,Allaire1992}.  
\end{definition}

\subsection{Two-scale convergence on the surface}
\label{2scaleboundsec}
To handle the interface term in the weak formulation, we will also need the following theorems, which generalize results about two-scale convergence to sequences in $L^2(\Gamma_\epsilon)$.  Their proofs can be found in \cite{Allaire95}.
\begin{theorem}\cite{Allaire95}
\label{th:2scv-bdary} Let $u_\epsilon$ be a sequence in $L^2(\Gamma_\epsilon)$ such that the surface integral satisfies the bound:
\begin{equation*}
	\epsilon\int_{\Gamma_\epsilon}\left|u_\epsilon(\mathbf{x})\right|^2\,d\sigma_\epsilon(\mathbf{x})\leq C.
\end{equation*}
Then, there exist a subsequence (still denoted by $\epsilon$) and a two-scale limit $g(\mathbf{x},\mathbf{y})\in L^2(\Omega;L^2(\Gamma))$, such that $u_\epsilon(\mathbf{x})$ two-scale converges to $g(\mathbf{x},\mathbf{y})$, in the sense that:
\begin{equation}
	\label{2scvboud}
	\lim_{\epsilon\rightarrow0}\epsilon\int_{\Gamma_\epsilon}u_\epsilon(\mathbf{x})\phi\left(\mathbf{x},\frac{\mathbf{x}}{\epsilon}\right)\,d\sigma_\epsilon(\mathbf{x})=\int_\Omega\int_\Gamma g(\mathbf{x},\mathbf{y})\phi(\mathbf{x},\mathbf{y})\,d\mathbf{x}\,d\sigma(\mathbf{y}),
\end{equation}
for every continuous function $\phi(\mathbf{x},\mathbf{y})\in C[\overline{\Omega};C_{p}(Y)]$.
\end{theorem}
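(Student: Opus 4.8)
The plan is to follow the Riesz-representation strategy underlying Nguetseng's and Allaire's proofs of two-scale compactness, adapted here to the surface measure $d\sigma_\epsilon$ (this is the argument of \cite{Allaire95}). First I would introduce, for each $\epsilon$, the linear functional
$$L_\epsilon(\phi):=\epsilon\int_{\Gamma_\epsilon}u_\epsilon(\mathbf{x})\,\phi\!\left(\mathbf{x},\tfrac{\mathbf{x}}{\epsilon}\right)d\sigma_\epsilon(\mathbf{x}),\qquad \phi\in C[\overline{\Omega};C_p(Y)],$$
and bound it by Cauchy--Schwarz: $|L_\epsilon(\phi)|\le\bigl(\epsilon\int_{\Gamma_\epsilon}|u_\epsilon|^2\,d\sigma_\epsilon\bigr)^{1/2}\bigl(\epsilon\int_{\Gamma_\epsilon}|\phi(\mathbf{x},\mathbf{x}/\epsilon)|^2\,d\sigma_\epsilon\bigr)^{1/2}$. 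The first factor is $\le C^{1/2}$ by hypothesis, so the whole argument hinges on controlling the second factor.

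The key technical step, and the main obstacle, is the \emph{surface averaging lemma}: for every $\psi\in C[\overline{\Omega};C_p(Y)]$,
$$\epsilon\int_{\Gamma_\epsilon}\psi\!\left(\mathbf{x},\tfrac{\mathbf{x}}{\epsilon}\right)d\sigma_\epsilon(\mathbf{x})\;\longrightarrow\;\frac{1}{|Y|}\int_\Omega\int_\Gamma\psi(\mathbf{x},\mathbf{y})\,d\sigma(\mathbf{y})\,d\mathbf{x}\qquad(\epsilon\to0).$$
I would prove this by decomposing $\Omega$ into the $\epsilon$-cells $\epsilon(\mathbf{k}+Y)$, $\mathbf{k}\in\mathbb{Z}^3$, separating interior cells (fully contained in $\Omega$) from boundary cells. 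On an interior cell the rescaling $\mathbf{x}=\epsilon(\mathbf{k}+\mathbf{y})$ turns $\int_{\Gamma_\epsilon\cap\epsilon(\mathbf{k}+Y)}\psi\,d\sigma_\epsilon$ into $\epsilon^2\int_\Gamma\psi(\epsilon(\mathbf{k}+\mathbf{y}),\mathbf{y})\,d\sigma(\mathbf{y})$, so the prefactor $\epsilon$ combines with $\epsilon^2$ and the count $\sim|\Omega|\,\epsilon^{-3}$ of cells to produce exactly the right normalization; uniform continuity of $\psi$ in $\mathbf{x}$ replaces $\psi(\epsilon(\mathbf{k}+\mathbf{y}),\mathbf{y})$ by $\psi(\epsilon\mathbf{k},\mathbf{y})$ up to $o(1)$, and what remains is a Riemann sum for the right-hand side. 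The boundary cells number $O(\epsilon^{-2})$ and each contributes $O(\epsilon^3)$ after multiplication by $\epsilon$, so their total contribution is $O(\epsilon)\to0$; the $C^1$-regularity and $Y$-periodicity of $\tilde\Gamma$ make the rescaling and surface-area estimates legitimate. Applying the lemma with $\psi=|\phi|^2$ gives $\epsilon\int_{\Gamma_\epsilon}|\phi(\mathbf{x},\mathbf{x}/\epsilon)|^2\,d\sigma_\epsilon\to\frac{1}{|Y|}\|\phi\|_{L^2(\Omega\times\Gamma)}^2$, hence
$$\limsup_{\epsilon\to0}|L_\epsilon(\phi)|\;\le\;C'\,\|\phi\|_{L^2(\Omega;L^2(\Gamma))}$$
with $C'$ independent of $\phi$, and in particular a uniform-in-$\epsilon$ bound $|L_\epsilon(\phi)|\le C''\sup|\phi|$ for $\epsilon$ small.

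With these estimates I would extract the subsequence by a diagonal argument: $C[\overline{\Omega};C_p(Y)]$ is separable, so along a suitable subsequence (still denoted $\epsilon$) $L_\epsilon(\phi)$ converges for every $\phi$ in a countable dense set, and the uniform sup-norm bound upgrades this to convergence $L_\epsilon(\phi)\to L(\phi)$ for all admissible $\phi$. The limit $L$ is linear and, by the $\limsup$ estimate, satisfies $|L(\phi)|\le C'\|\phi\|_{L^2(\Omega\times\Gamma)}$. Since $C[\overline{\Omega};C_p(Y)]$ is dense in $L^2(\Omega\times\Gamma)$ (with measure $d\mathbf{x}\otimes d\sigma(\mathbf{y})$), $L$ extends uniquely to a bounded linear functional on that Hilbert space, and the Riesz representation theorem yields $g\in L^2(\Omega;L^2(\Gamma))$ with $L(\phi)=\int_\Omega\int_\Gamma g\,\phi\,d\mathbf{x}\,d\sigma(\mathbf{y})$ (here $|Y|=1$, absorbing the normalization constant). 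This is precisely \eqref{2scvboud}. Everything except the surface averaging lemma is the standard functional-analytic packaging; that lemma is where all the geometry of $\Gamma_\epsilon$ enters, so it is the step I would write out in full detail.
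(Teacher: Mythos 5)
Your proposal is correct, but note that the paper does not prove this theorem at all: it is quoted verbatim from \cite{Allaire95}, and the appendix explicitly defers the proof to that reference. The argument you outline --- Cauchy--Schwarz on the functionals $L_\epsilon$, the surface averaging lemma proved by cell decomposition with the $\epsilon\cdot\epsilon^{2}\cdot\epsilon^{-3}$ bookkeeping and an $O(\epsilon)$ boundary-cell remainder, then separability, a diagonal extraction, and Riesz representation on $L^2(\Omega\times\Gamma)$ --- is exactly the proof given in \cite{Allaire95}, so there is nothing to compare against within this paper itself.
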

The following extension theorems play a crucial role in establishing the uniform bounds of solutions, which are required for the two-scale convergence.
\subsection{Extension theorems.}
Define: 
\begin{eqnarray}
\Sigma^{s,f}_\epsilon=\partial\Omega\,\cap\,\epsilon\,\tilde{Y}_{s,f}, \hspace{2mm}\mathbf{V}_{s,f}=\left\{\mathbf{v}\in \mathbf{H^1}(\Omega_\epsilon^{s,f})\,:\,\mathbf{v}=0\text{ on }\Sigma^{s,f}_\epsilon \right\}, \label{thm_ext_V}\\
\hspace{2mm}\Omega_1=\left\{\mathbf{x}\in\mathbb{R}^3\,:\,{\rm d}(\mathbf{x},\overline{\Omega})<1\right\}, \label{thm_ext_omega1}
\end{eqnarray}
where $d$ designates the Euclidean metric and $\overline{\Omega}$ is the closure of $\Omega$ in $\mathbb{R}^N$.
\begin{theorem}[Theorem~A of \cite{Nguetseng1990}]
\label{thm:ext1}
For each $\epsilon<\epsilon_o$ ($\epsilon_o$ is a suitable constant), there exists an extension operator $T_\epsilon\in\mathcal{L}(\mathbf{V}_{s},\mathbf{H_0^1}(\Omega_1))$ (i.e., $T_\epsilon$ is continuous linear and $T_\epsilon\mathbf{u}=\mathbf{u}$ on $\Omega_\epsilon^s$, for all $\mathbf{u}\in \mathbf{V_{s}}$) such that: $$\int_{\Omega_1}E_{ij}(T_\epsilon\mathbf{u})\overline{E_{ij}(T_\epsilon\mathbf{u})}\,d\mathbf{x}\leq C\int_{\Omega_\epsilon^s}E_{ij}(\mathbf{u})\overline{E_{ij}(\mathbf{u})}\,d\mathbf{x}\hspace{3mm}\forall \mathbf{u}\in \mathbf{V_\epsilon},$$
where the constant $C$ does not depend on $\epsilon$.
\end{theorem}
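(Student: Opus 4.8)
The plan is to reduce the theorem to a single, $\epsilon$-independent extension problem on the reference perforated medium, solve that once and for all, and then rescale. Given $\mathbf{u}\in\mathbf{V}_s$, first extend $\mathbf{u}$ by zero across $\partial\Omega$; this produces an $\mathbf{H}^1$ function on a neighbourhood of $\overline{\Omega}$ intersected with $\epsilon\tilde{Y}_s$, precisely because $\mathbf{u}$ vanishes on $\Sigma^s_\epsilon=\partial\Omega\cap\epsilon\tilde{Y}_s$ and $\partial\Omega$ is smooth. Next pass to the fast variable $\mathbf{y}=\mathbf{x}/\epsilon$: set $\mathbf{v}(\mathbf{y})=\mathbf{u}(\epsilon\mathbf{y})$, which (after the zero extension) lives on a neighbourhood of $\epsilon^{-1}\overline{\Omega}$ inside $\tilde{Y}_s$. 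Since $e_{ij}(\mathbf{v})(\mathbf{y})=\epsilon\,E_{ij}(\mathbf{u})(\epsilon\mathbf{y})$, an estimate of the form $\int|e(\mathcal{P}\mathbf{v})|^2\,d\mathbf{y}\le C\int_{\tilde{Y}_s}|e(\mathbf{v})|^2\,d\mathbf{y}$ with $C$ absolute, together with the definition $T_\epsilon\mathbf{u}(\mathbf{x}):=(\mathcal{P}\mathbf{v})(\mathbf{x}/\epsilon)$, yields the asserted bound with an $\epsilon$-independent constant, because the volume and gradient scaling factors exactly compensate. So the whole problem reduces to constructing a fixed linear extension operator $\mathcal{P}$ from $\mathbf{H}^1(\tilde{Y}_s)$ into $\mathbf{H}^1_{\mathrm{loc}}(\mathbb{R}^3)$ that controls the symmetrized gradient by the symmetrized gradient.

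To build $\mathcal{P}$ I would use a periodic partition of unity. Using connectedness of $\tilde{Y}_s$, fix a sufficiently large fixed block of cells $N$ so that $N\cap\tilde{Y}_s$ is connected, put $N_\mathbf{k}=N+\mathbf{k}$ for $\mathbf{k}\in\mathbb{Z}^3$, and choose $\{\varphi_\mathbf{k}\}$ smooth, subordinate to $\{N_\mathbf{k}\}$, with $\sum_\mathbf{k}\varphi_\mathbf{k}\equiv1$ and all $\|\nabla\varphi_\mathbf{k}\|_\infty$ bounded by a fixed constant. On each $N_\mathbf{k}\cap\tilde{Y}_s$, let $\mathbf{r}_\mathbf{k}[\mathbf{v}]$ be the $\mathbf{L}^2$-orthogonal projection of $\mathbf{v}$ onto the space of rigid-body motions (linear in $\mathbf{v}$), and let $P$ denote a Sobolev extension operator $\mathbf{H}^1(N_\mathbf{k}\cap\tilde{Y}_s)\to\mathbf{H}^1(N_\mathbf{k})$; such a $P$ exists with norm independent of $\mathbf{k}$ because, by the structural hypotheses (ii)--(iii) on $\tilde{Y}_s$ ($C^1$ boundary, locally one-sided, connected) and periodicity, the sets $N_\mathbf{k}\cap\tilde{Y}_s$ are translates of finitely many fixed Lipschitz domains. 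Define
\[
\mathcal{P}\mathbf{v}:=\sum_{\mathbf{k}\in\mathbb{Z}^3}\varphi_\mathbf{k}\,\Big(P\big(\mathbf{v}-\mathbf{r}_\mathbf{k}[\mathbf{v}]\big)+\mathbf{r}_\mathbf{k}[\mathbf{v}]\Big).
\]
The sum is locally finite, so $\mathcal{P}\mathbf{v}\in\mathbf{H}^1_{\mathrm{loc}}$; on $\tilde{Y}_s$ each summand equals $\varphi_\mathbf{k}\mathbf{v}$, hence $\mathcal{P}\mathbf{v}=\mathbf{v}$ there, i.e. $\mathcal{P}$ really is an extension.

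For the estimate, write $\mathbf{w}_\mathbf{k}:=P(\mathbf{v}-\mathbf{r}_\mathbf{k}[\mathbf{v}])$. On a fixed cell, using $\sum_\mathbf{k}\nabla\varphi_\mathbf{k}=0$, one writes $e(\mathcal{P}\mathbf{v})$ as $\sum_\mathbf{k}\varphi_\mathbf{k}\,e(\mathbf{w}_\mathbf{k})$ plus terms in which $\nabla\varphi_\mathbf{k}$ multiplies differences $\mathbf{w}_\mathbf{k}-\mathbf{w}_{\mathbf{k}'}$ over overlapping blocks. The first sum is controlled pointwise by the $|e(\mathbf{w}_\mathbf{k})|$, and $\|e(\mathbf{w}_\mathbf{k})\|_{\mathbf{L}^2(N_\mathbf{k})}\le\|\nabla\mathbf{w}_\mathbf{k}\|_{\mathbf{L}^2(N_\mathbf{k})}\le C\|\mathbf{v}-\mathbf{r}_\mathbf{k}[\mathbf{v}]\|_{\mathbf{H}^1(N_\mathbf{k}\cap\tilde{Y}_s)}\le C\|e(\mathbf{v})\|_{\mathbf{L}^2(N_\mathbf{k}\cap\tilde{Y}_s)}$, where the last step is the Korn inequality modulo rigid motions on $N_\mathbf{k}\cap\tilde{Y}_s$ — a consequence of Lemma~\ref{lem:Korn} by the standard compactness argument, uniform in $\mathbf{k}$ by translation invariance. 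For the difference terms, since $\mathbf{w}_\mathbf{k}=\mathbf{v}-\mathbf{r}_\mathbf{k}[\mathbf{v}]$ on the solid, on the (finite) union of the involved blocks $\mathbf{w}_\mathbf{k}-\mathbf{w}_{\mathbf{k}'}$ reduces to $\mathbf{r}_{\mathbf{k}'}[\mathbf{v}]-\mathbf{r}_\mathbf{k}[\mathbf{v}]$, a difference of rigid motions whose $\mathbf{L}^2$-norm on an adjacent block is again bounded by $\|\mathbf{v}-\mathbf{r}_\mathbf{k}[\mathbf{v}]\|_{\mathbf{H}^1}$ of a fixed finite union of blocks, hence by $\|e(\mathbf{v})\|_{\mathbf{L}^2}$ there. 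Summing over $\mathbf{k}$ with the bounded overlap multiplicity of the cover gives $\int|e(\mathcal{P}\mathbf{v})|^2\le C\int_{\tilde{Y}_s}|e(\mathbf{v})|^2$ with $C$ absolute. Rescaling back yields the desired inequality; moreover $\mathrm{supp}(\mathcal{P}\mathbf{v})$ lies in a fixed-width (in $\mathbf{y}$) neighbourhood of $\epsilon^{-1}\overline{\Omega}$, so $\mathrm{supp}(T_\epsilon\mathbf{u})$ is within $O(\epsilon)$ of $\overline{\Omega}$; choosing $\epsilon_o$ small enough that this set is compactly contained in $\Omega_1$, we obtain $T_\epsilon\mathbf{u}\in\mathbf{H}_0^1(\Omega_1)$, and linearity and continuity of $T_\epsilon$ are clear from the construction.

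I expect the main obstacle to be this strain-controls-strain estimate through the partition of unity: one must arrange the rigid-motion corrections so that $\nabla\varphi_\mathbf{k}$ only ever hits the ``Korn-small'' quantities $\mathbf{v}-\mathbf{r}_\mathbf{k}[\mathbf{v}]$ (and differences of nearby rigid motions), and one must ensure that the Korn-modulo-rigid-motions constant on the local blocks $N_\mathbf{k}\cap\tilde{Y}_s$ is uniform in $\mathbf{k}$ — which is exactly where the connectedness of $\tilde{Y}_s$ and the $C^1$, one-sided-boundary hypotheses (ii)--(iii) are essential, since without connectedness one could not make each $N_\mathbf{k}\cap\tilde{Y}_s$ connected and the local Korn inequalities would degenerate.
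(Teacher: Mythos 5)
This theorem is not proved in the paper at all: it is imported verbatim as Theorem~A of \cite{Nguetseng1990}, so there is no in-paper argument to compare against. Your construction is essentially the standard proof of that result (in the spirit of Nguetseng and of Cioranescu--Saint~Jean~Paulin type extension operators): zero extension across $\Sigma^s_\epsilon$, rescaling to the fast variable so that the volume and strain factors cancel, and a periodic, rigid-motion-corrected local extension glued by a partition of unity, with the key points correctly identified --- the strain only ever acts on $P(\mathbf{v}-\mathbf{r}_\mathbf{k}[\mathbf{v}])$ or on differences involving rigid motions, the Korn inequality modulo rigid motions on $N_\mathbf{k}\cap\tilde{Y}_s$ is uniform in $\mathbf{k}$ by translation invariance, and bounded overlap gives summability. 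The only places that deserve a line or two more care are routine: (a) the claim that a fixed finite block $N$ can be chosen with $N\cap\tilde{Y}_s$ connected should be justified from hypotheses (ii)--(iii) (connectedness of $Y_s$ and of $\tilde{Y}_s$ plus the positive-measure face intersections), since connectedness of $\tilde{Y}_s$ alone does not formally localize; and (b) in the cross terms one must bound $\mathbf{r}_\mathbf{k}[\mathbf{v}]-\mathbf{r}_{\mathbf{k}'}[\mathbf{v}]$ on the whole overlap from its size on the solid part of the overlap, which uses the finite dimensionality of the space of rigid motions and the fact that the solid overlap contains a fixed-shape set of positive measure, again uniformly by periodicity. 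With those two standard remarks made explicit, your argument is a complete and correct proof of Theorem~\ref{thm:ext1}.
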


A similar extension theorem can be established for $\mathbf{V}_f$.

\begin{theorem}[Theorem~B of \cite{Nguetseng1990}]
\label{thm:ext2}
There exists an extension operator $T_p\in\mathcal{L}(\mathbf{H_p^1}(Y_s),\mathbf{H_p^1})$ such that $T_p\mathbf{w}=\mathbf{w}$ almost everywhere in $Y_s$, for all $\mathbf{w}\in \mathbf{H_p^1}(Y_s)$ and: $$\int_{Y}e_{ij}(T_p\mathbf{w})\overline{e_{ij}(T_p\mathbf{w})}\,d\mathbf{x}\leq C\int_{Y_s}e_{ij}(\mathbf{w})\overline{e_{ij}(\mathbf{w})}\,d\mathbf{x}\hspace{3mm}\forall \mathbf{w}\in \mathbf{H_p^1}(Y_s),$$
where the constant $C$ does not depend on $\epsilon$.
\end{theorem}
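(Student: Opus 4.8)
The plan is to obtain $T_p$ in two stages: first a $Y$-periodic extension operator that is bounded merely in the $H^1$ norm, and then a correction turning this into the strain-only bound required by the statement. The device that makes the second stage work is that on the connected set $\tilde Y_s$ the only $Y$-periodic infinitesimally rigid (i.e.\ strain-free) fields are constants, and constants extend with zero strain.

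\textbf{Stage 1 (a bounded periodic extension).} By hypothesis (ii) the periodic set $\tilde Y_s$ is open, lies locally on one side of its boundary $\tilde\Gamma$, and $\tilde\Gamma$ is a $Y$-periodic surface of class $C^1$. I would cover a neighbourhood of $\tilde\Gamma$ by finitely many open sets $U_1,\dots,U_N$ (finitely many modulo $\mathbb Z^3$-translations, by compactness of $\tilde\Gamma/\mathbb Z^3$) in each of which, after a $C^1$ change of variables with uniformly bounded derivatives, $\tilde\Gamma$ becomes a hyperplane and $\tilde Y_s$ one of its sides; the classical reflection extension then yields a bounded linear operator $P_j\colon H^1(U_j\cap\tilde Y_s)\to H^1(U_j)$. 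Adjoining an open $U_0$ with $\overline{U_0}\subset\tilde Y_s$ and $P_0=\mathrm{id}$, and a $Y$-periodic partition of unity $\{\zeta_j\}_{j\ge0}$ subordinate to $\{U_j\}_{j\ge0}$, I set $\widetilde T_p\mathbf w:=\sum_j\zeta_j\,P_j\mathbf w$. Since every $\zeta_j$ and every $P_j$ is compatible with $\mathbb Z^3$-periodicity, $\widetilde T_p\mathbf w$ is $Y$-periodic; it equals $\mathbf w$ on $\tilde Y_s$; and, replacing norms over the fixed window $\bigcup_j U_j$ by norms over $Y_s$ via periodicity of $\mathbf w$, one gets $\|\widetilde T_p\mathbf w\|_{\mathbf H^1(Y)}\le C\|\mathbf w\|_{\mathbf H^1_p(Y_s)}$ with $C$ depending only on the fixed geometry of $Y_s$. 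This is the standard periodic extension construction underlying Theorem~\ref{thm:ext1} (see also \cite{Nguetseng1990}).

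\textbf{Stage 2 (strain estimate).} If $e(\mathbf v)=0$ on the connected set $\tilde Y_s$ then $\mathbf v(\mathbf y)=\mathbf a+B\mathbf y$ with $B$ skew-symmetric, and $Y$-periodicity forces $B=0$, so $\mathbf v$ is a constant. Using this, let $\bar{\mathbf w}:=|Y_s|^{-1}\int_{Y_s}\mathbf w\,d\mathbf y\in\mathbb C^3$ and define
\[
T_p\mathbf w:=\bar{\mathbf w}+\widetilde T_p(\mathbf w-\bar{\mathbf w}).
\]
Then $T_p$ is linear, $T_p\mathbf w=\mathbf w$ a.e.\ on $Y_s$, and $T_p\in\mathcal L(\mathbf H^1_p(Y_s),\mathbf H^1_p)$ by Stage~1 (the constant term contributes $|\bar{\mathbf w}|\le C\|\mathbf w\|_{L^2(Y_s)}$). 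Because $\bar{\mathbf w}$ is constant, $e(T_p\mathbf w)=e\big(\widetilde T_p(\mathbf w-\bar{\mathbf w})\big)$, whence
\[
\int_Y e_{ij}(T_p\mathbf w)\,\overline{e_{ij}(T_p\mathbf w)}\,d\mathbf y\le\|\widetilde T_p(\mathbf w-\bar{\mathbf w})\|_{\mathbf H^1(Y)}^2\le C\,\|\mathbf w-\bar{\mathbf w}\|_{\mathbf H^1(Y_s)}^2.
\]
Finally, the Poincar\'e--Wirtinger inequality together with Korn's inequality for $\mathbf H^1_p(Y_s)/\mathbb C^3$ (the coercivity estimate also invoked in the proof of Theorem~\ref{sec:probu1}) give $\|\mathbf w-\bar{\mathbf w}\|_{\mathbf H^1(Y_s)}^2\le C\int_{Y_s}e_{ij}(\mathbf w)\,\overline{e_{ij}(\mathbf w)}\,d\mathbf y$, which is the asserted bound; every constant depends only on $Y_s$, so in particular is independent of $\epsilon$.

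\textbf{Main obstacle.} The real work is in Stage~1: the local reflection operators and the partition of unity must be chosen $Y$-periodically so that the assembled operator preserves periodicity — this is exactly what lets the estimate be applied cell-by-cell after the rescaling $\mathbf y\mapsto\mathbf x/\epsilon$ — and the $C^1$ regularity of $\tilde\Gamma$ is precisely what is needed for the reflection to furnish a \emph{bounded} $H^1$ extension rather than merely a continuous one. A secondary, conceptual subtlety is that one might fear needing Korn's second inequality modulo the full six-dimensional space of rigid motions; the mean-subtraction device in Stage~2 sidesteps this because $Y$-periodicity annihilates the rotational rigid displacements, leaving only constants, which extend with zero strain.
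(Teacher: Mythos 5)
Your argument is correct, but note that the paper itself gives no proof of Theorem~\ref{thm:ext2}: it is quoted as Theorem~B of Nguetseng (1990), so there is no internal proof to compare against. Your two-stage construction (a $Y$-periodic $H^1$-bounded extension assembled from local reflections and a periodic partition of unity, followed by the mean-subtraction correction $T_p\mathbf w=\bar{\mathbf w}+\widetilde T_p(\mathbf w-\bar{\mathbf w})$ so that only the strain of the zero-mean part matters) is the standard route and all the steps check out: the identity $T_p\mathbf w=\mathbf w$ on $Y_s$, linearity, boundedness in $\mathbf H^1_p$, and the chain $\|e(T_p\mathbf w)\|_{L^2(Y)}^2\le\|\widetilde T_p(\mathbf w-\bar{\mathbf w})\|_{H^1(Y)}^2\le C\|\mathbf w-\bar{\mathbf w}\|_{H^1(Y_s)}^2\le C\|e(\mathbf w)\|_{L^2(Y_s)}^2$ are all justified under hypotheses (ii)--(iii). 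The one point to present more carefully is the last inequality: you invoke Korn's inequality on $\mathbf H^1_p(Y_s)/\mathbb C^3$, which the paper also uses without proof (in establishing \eqref{a-coerc} for Theorem~\ref{sec:probu1}), but in Nguetseng's development that quotient Korn inequality is typically \emph{derived from} the extension theorem you are proving (extend to $Y$ and use that strain controls the full gradient for periodic fields on the torus), so as written your proof could appear circular. The fix is already contained in your own Stage~2 observation: prove the quotient Korn inequality independently by Korn's second inequality on the Lipschitz domain $Y_s$ plus a compactness/contradiction argument, using that a $Y$-periodic field with vanishing strain on the connected set $\tilde Y_s$ is a constant; with that inserted, the proof is complete and self-contained.
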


A similar extension theorem can be established for $Y_f$.

\end{document}